\newcommand{\R}{\mathbf{R}}
\newtheorem{theorem}{Theorem}
\newtheorem{lemma}{Lemma}
\newtheorem{remark}{Remark}
\def\be{\begin{equation}}
\def\ee{\end{equation}}
\def\bea{\begin{eqnarray}}
\def\eea{\end{eqnarray}}
\def\beas{\begin{eqnarray*}}
\def\eeas{\end{eqnarray*}}
\title[Hybrid estimation for noisy ergodic diffusion processes]{Hybrid estimation for ergodic diffusion processes based on noisy discrete observations}
\author[Y Kaino]{Yusuke Kaino$^{1}$}
\author[S H Nakakita]{Shogo H Nakakita$^{1}$}
\author[M Uchida]{Masayuki Uchida$^{1,2,3}$}
\address{$^{1}$Graduate School of Engineering Science, Osaka University}
\address{$^{2}$Center for Mathematical Modeling and Data Science, Osaka University}
\address{$^{3}$JST CREST}
\keywords{Bayes type estimator, 
convergence of moments,
ergodic diffusion process, 
multi-step estimator,
quasi maximum likelihood estimator,
reduced data,
}
\begin{document}
\maketitle

\begin{abstract}
We consider parametric estimation for ergodic diffusion processes with
noisy sampled data based on the hybrid method, 
that is, the multi-step estimation with the initial Bayes type estimators.
In order to select proper initial values for optimisation of the quasi likelihood function of
ergodic diffusion processes with noisy observations,
we construct the initial Bayes type estimator based on the local means of the noisy observations.
The asymptotic properties of the initial Bayes type estimators and the hybrid multi-step estimators
with the initial Bayes type estimators are shown, and a concrete example and the simulation results are given. 
\end{abstract}

\section{Introduction}

We consider the $d$-dimensional ergodic diffusion process defined by the stochastic differential equation
\begin{align*}
    \mathrm{d}X_{t}=b\left(X_{t},\beta\right)\mathrm{d}t+a\left(X_{t},\alpha\right)\mathrm{d}w_{t},\ X_{0}=x_{0},
\end{align*}
where $\left\{w_{t}\right\}_{t\ge0}$ is an $r$-dimensional Wiener process, $x_{0}$ is a $d$-dimensional random vector independent of $\left\{w_{t}\right\}_{t\ge0}$, $\alpha\in\Theta_{1}$ and $\beta\in\Theta_{2}$ are unknown parameters, $\Theta_{i}\subset\mathbf{R}^{m_{i}}$ is bounded, open and convex sets in $\mathbf{R}^{m_{i}}$ admitting Sobolev's inequalities for embedding $W^{1,p}\left(\Theta_{i}\right)\hookrightarrow C\left(\bar{\Theta_{i}}\right)$ {\citep[see][]{Adams-Fournier-2003, Yoshida-2011}}
for $i=1,2$, 
$\theta^{\star}=\left(\alpha^{\star},\beta^{\star}\right)$ is the true parameter vector, 
and $a:\mathbf{R}^{d}\times\Theta_{1}\to\mathbf{R}^{d}\otimes\mathbf{R}^{r}$ and $b:\mathbf{R}^{d}\times\Theta_{2}\to \mathbf{R}^{d}$ are known functions.

Our concern in this paper is the estimation of $\theta:=\left(\alpha,\beta\right)\in\Theta:=\Theta_{1}\times\Theta_{2}$ with long-term, discrete and noisy observation defined as the sequence of the $d$-dimensional random vectors $\left\{Y_{ih_{n}}\right\}_{i=0,\ldots,n}$ such that for all $i=0,\ldots,n$,
\begin{align*}
    Y_{ih_{n}} := X_{ih_{n}} + \Lambda^{1/2}\varepsilon_{ih_{n}},
\end{align*}
where $h_{n}>0$ is the discretisation step satisfying $h_{n}\to0$ and $T_{n}:=nh_{n}\to\infty$ as $n\to\infty$,  $\left\{\varepsilon_{ih_{n}}\right\}_{i=0,\ldots,n}$ is the i.i.d.\ sequence of $d$-dimensional random vectors with $\mathbf{E}\left[\varepsilon_{0}\right]=\mathbf{0}$ and $\mathrm{Var}\left(\varepsilon_{0}\right)=I_{d}$, 
the components of $\varepsilon_{0}$ are independent of each other and have symmetric distribution with respect to 0, and $\Lambda$ is a $d\times d$ real matrix being positive semi-definite, defining the variance of noise term $\Lambda^{1/2} \varepsilon_{ih_{n}}$. Let us assume the half-vectorisation $\theta_{\varepsilon}:=\mathrm{vech}\Lambda$ is in the bounded, convex and open parameter space $\Theta_{\varepsilon}\subset\mathbf{R}^{d\left(d+1\right)/2}$, and denote $\Xi:=\Theta_{\varepsilon}\times\Theta_{1}\times\Theta_{2}$.

Statistical inference for ergodic diffusion processes
has been researched
for the last few decades, 
for instance, see
\citet{Florens-Zmirou-1989,Yoshida-1992,Bibby-Sorensen-1995,Kessler-1995,Kessler-1997,Kutoyants-2004, Iacus-2008, DeGregorioa-Iacus-2013, DeGregorioa-Iacus-2018, Iacus-Yoshida-2018}
and references therein. 
The parametric inference for ergodic diffusion processes with discrete and noisy observations has been researched in \citet{Favetto-2014,Favetto-2016} and 
\citet{Nakakita-Uchida-2017,Nakakita-Uchida-2018a,Nakakita-Uchida-2018b,Nakakita-Uchida-2018c}. For parametric estimation for non-ergodic diffusion processes in the presence of market microstructure noise,
see \citet{Ogihara-2018}.
 \citet{Favetto-2014} proposes a simultaneous quasi likelihood function $\mathbb{H}_{n}\left(\alpha,\beta\right)$ which necessitates optimisation with respect to both $\alpha$ and $\beta$ and shows maximum likelihood (ML) type estimators $\left(\hat{\alpha}_{n},\hat{\beta}_{n}\right)=\arg\max \mathbb{H}_{n}\left(\alpha,\beta\right)$ have consistency even if the variance of noise is unknown; \citet{Favetto-2016} discusses asymptotic normality of the estimator proposed in \citet{Favetto-2014} when the variance of noise is known; \citet{Nakakita-Uchida-2017,Nakakita-Uchida-2018b} suggest adaptive quasi likelihood functions $\mathbb{H}_{1,n}\left(\alpha\right)$ and $\mathbb{H}_{2,n}\left(\beta\right)$ which succeed in lessening the computational burden in comparison to \citet{Favetto-2014,Favetto-2016}, and prove consistency and asymptotic normality of the adaptive ML type estimators corresponding to the quasi likelihoods; \citet{Nakakita-Uchida-2018a} use those quasi likelihood functions for likelihood-ratio-type test and show the asymptotic behaviour of test statistics under both null hypotheses and alternative ones; \citet{Nakakita-Uchida-2018c} analyse those quasi likelihood functions with the framework of quasi likelihood analysis (QLA) proposed by \citet{Yoshida-2011}, and show the polynomial large deviation inequality (PLDI) for the quasi likelihood functions and consequently the convergence of moments of adaptive ML type estimators and adaptive Bayes type ones.
For details of adaptive estimation for diffusion processes, see  
\citet{Yoshida-1992, Yoshida-2011, Uchida-Yoshida-2012, Uchida-Yoshida-2014}.

In general, however, the optimisation of quasi likelihood functions for diffusion processes, regardless of noise existence, is strongly dependent on initial values, especially in the case where the volatility function $a$ or drift function $b$ are nonlinear with respect to parameters. Hence, \citet{Kaino-et-al-2017} and \citet{Kaino-Uchida-2018a, Kaino-Uchida-2018b} propose hybrid multi-step estimation procedure for diffusion processes where initial values in optimisation are derived from Bayes type estimation with reduced sample sizes and the sequential optimisation with these initial values is implemented, which inherits the idea of hybrid multi-step estimation for diffusion processes with full sample sizes in \citet{Kamatani-Uchida-2015} {\citep[see also][]{Kutoyants-2017}}. In this research, we also consider hybrid multi-step estimation 
and apply the idea into inference problem, in particular, PLDI for the quasi likelihood functions and 
the convergence of moments of estimators for discretely and noisily observed ergodic diffusion processes
since PLDI and convergence of moment of estimators are key tools to show the mathematical validity
of information criteria for model selection problems {\citep[see][]{Uchida-2010, Fujii-Uchida-2014, Eguchi-Masuda-2018}}.

This paper consists of the following parts: Section 2 deals with the notation; we define the initial and multi-step estimators and set the main theorem for the polynomial-type large deviation inequalities,
moment estimates of the Bayes type estimators and convergences of moments in Section 3; 
a concrete example and simulation results are given in Section 4,
the conclusions of this work are {summarised} in Section 5,  
and finally we give the proofs of the results in Section 6.

\section{Notation and Assumption}
First of all we give the notation used throughout this paper.

\begin{itemize}
	\item For every matrix $A$, $A^{T}$ is the transpose of $A$, and $A^{\otimes 2}:=AA^{T}$.
	\item For every set of matrices $A$ and $B$ 
of the same size,
$A\left[B\right]:=\mathrm{tr}\left(AB^{T}\right)$. Moreover, for any $m\in\mathbf{N}$, $A\in\R^{m}\otimes\R^{m}$ and $u,v\in\R^{m}$, $A\left[u,v\right]:=v^{T}Au$.
	\item Let us denote the $\ell$-th element of any vector $v$ as $v^{\left(\ell\right)}$ and $\left(\ell_{1},\ell_{2}\right)$-th one of any matrix $A$ as $A^{\left(\ell_{1},\ell_{2}\right)}$.
	\item For any vector $v$ and any matrix $A$, $\left|v\right|:=\sqrt{\mathrm{tr}\left(v^{T}v\right)}$ and $\left\|A\right\|:=\sqrt{\mathrm{tr}\left(A^{T}A\right)}$.
	\item For every $p>0$, $\left\|\cdot\right\|_{p}$ is the $L^{p}\left(P_{\theta^{\star}}\right)$-norm.
	\item $A\left(x,\alpha\right):=a\left(x,\alpha\right)^{\otimes 2}$, $a\left(x\right):=a\left(x,\alpha^{\star}\right)$, $A\left(x\right):=A\left(x,\alpha^{\star}\right)$ and $b\left(x\right):=b\left(x,\beta^{\star}\right)$.
	\item For $i=1,2,3$ and $\tau_{i}\in\left(1,2\right]$, $p_{\tau_{i},n}:=h_{n}^{-1/\tau_{i}}$, $\Delta_{\tau_{i},n}:=p_{\tau_{i},n}h_{n}$,  $k_{\tau_{i},n}:=n/p_{\tau_{i},n}=nh_{n}^{1/\tau_{i}}$.
	\item With respect to filtration, for all $i=1,2,3$, $\mathcal{G}_{t}:=\sigma\left(x_{0},w_{s}:s\le t\right)$, $\mathcal{G}_{j,i,n}^{\tau_{i}}:=\mathcal{G}_{j\Delta_{\tau_{i},n}+ih_{n}}$,
	$\mathcal{G}_{j,n}^{\tau_{i}}:=\mathcal{G}_{j,0,n}^{\tau_{i}}$, 
	$\mathcal{A}_{j,i,n}^{\tau_{i}}:=\sigma\left(\varepsilon_{\ell h_{n}}:\ell \le jp_{\tau_{i},n}+i-1\right)$, $\mathcal{A}_{j,n}^{\tau_{i}}:=\mathcal{A}_{j,0,n}^{\tau_{i}}$, $\mathcal{H}_{j,i,n}^{\tau_{i}}:=\mathcal{G}_{j,i,n}^{\tau_{i}}\vee \mathcal{A}_{j,i,n}^{\tau_{i}}$
	and $\mathcal{H}_{j,n}^{\tau_{i}}:=\mathcal{H}_{j,0,n}^{\tau_{i}}$.
\end{itemize}
With respect to $X_{t}$, we assume the following conditions.
\begin{itemize}
\item[{[A1]}]
\begin{itemize}
\item[(i)] $\inf_{x,\alpha}\det A\left(x,\alpha\right)>0$.
\item[(ii)] For a constant $C$, for all $x_{1},x_{2}\in\R^{d}$,
\begin{align*}
	\sup_{\alpha\in\Theta_{1}}\left\|a\left(x_{1},\alpha\right)-a\left(x_{2},\alpha\right)\right\|+
	\sup_{\beta\in\Theta_{2}}\left|b\left(x_{1},\beta\right)-a\left(x_{2},\beta\right)\right|\le C\left|x_{1}-x_{2}\right|.
\end{align*}
\item[(iii)] For all $p\ge0$, $\sup_{t\ge0}\mathbf{E}_{\theta^{\star}}\left[\left|X_{t}\right|^{p}\right]<\infty$.
\item[(iv)] There exists a unique invariant measure $\nu=\nu_{\theta^{\star}}$ on $\left(\R^{d},\mathcal{B}\left(\R^{d}\right)\right)$ and for all $p\ge1$ and $f\in L^{p}\left(\nu\right)$ with polynomial growth,
\begin{align*}
\frac{1}{T}\int_{0}^{T}f\left(X_{t}\right)\mathrm{d}t\to^{P}\int_{\R^{d}}f\left(x\right)\nu\left(\mathrm{d}x\right).
\end{align*}
\item[(v)] For any polynomial growth function $g:\R^{d}\to\R$  satisfying $\int_{R^{d}}g\left(x\right)\nu\left(\mathrm{d}x\right) = 0$, there exist $G(x)$, $\partial_{x^{\left(i\right)}}G(x)$
with at most polynomial growth for $i=1,\ldots,d$ such that for all $x\in\R^{d}$,
\begin{align*}
L_{\theta^{\star}}G\left(x\right)=-g\left(x\right),
\end{align*}
where $L_{\theta^{\star}}$ is the infinitesimal generator of $X_{t}$.
\end{itemize}
\end{itemize}

\begin{remark}
\cite{Paradoux-Veretennikov-2001} show a sufficient condition for [A1]-(v). \cite{Uchida-Yoshida-2012} also introduce the sufficient condition for [A1]-(iii)--(v) assuming [A1]-(i)--(ii), $\sup_{x,\alpha}A\left(x,\alpha\right)<\infty$ and $^\exists c_{0}>0$, $M_{0}>0$ and $\gamma\ge 0$ such that for all $\beta\in\Theta_{2}$ and $x\in\R^{d}$ satisfying $\left|x\right|\ge M_{0}$,
\begin{align*}
	\frac{1}{\left|x\right|}x^{T}b\left(x,\beta\right)&\le -c_{0}\left|x\right|^{\gamma}.
\end{align*}
\end{remark}

\begin{itemize}
\item[{[A2]}] There exists $C>0$ such that $a:\R^{d}\times \Theta_{1}\to \R^{d}\otimes \R^{r}$ and $b:\R^{d}\times \Theta_{2}\to \R^{d}$ have continuous derivatives satisfying
\begin{align*}
\sup_{\alpha\in\Theta_{1}}\left|\partial_{x}^{j}\partial_{\alpha}^{i}a\left(x,\alpha\right)\right|&\le
C\left(1+\left|x\right|\right)^{C},\ 0\le i\le 4,\ 0\le j\le 2,\\
\sup_{\beta\in\Theta_{2}}\left|\partial_{x}^{j}\partial_{\beta}^{i}b\left(x,\beta\right)\right|&\le C\left(1+\left|x\right|\right)^{C},\ 0\le i\le 4,\ 0\le j\le 2.
\end{align*}
\end{itemize}

With the invariant measure $\nu$, we define
{\small\begin{align*}
\mathbb{Y}_{1}^{\tau_{3}} \left(\alpha;\vartheta^{\star}\right)&:=-\frac{1}{2}\int \left\{\mathrm{tr}\left(A^{\tau_{3}}\left(x,\alpha,\Lambda^{\star}\right)^{-1}A^{\tau_{3}}\left(x,\alpha^{\star},\Lambda^{\star}\right)-I_{d}\right)+\log\frac{\det A^{\tau_{3}}\left(x,\alpha,\Lambda^{\star}\right)}{\det A^{\tau_{3}}\left(x,\alpha^{\star},\Lambda^{\star}\right)}\right\}\nu\left(\mathrm{d}x\right),\\
\mathbb{Y}_{2}\left(\beta;\vartheta^{\star}\right)&:=-\frac{1}{2}
\int A\left(x,\alpha^{\star}\right)^{-1}\left[\left(b\left(x,\beta\right)-b\left(x,\beta^{\star}\right)\right)^{\otimes 2}\right]\nu\left(\mathrm{d}x\right),\\
\mathbb{V}_{1} \left(\alpha;\vartheta^{\ast}\right)
	&:=-\frac{2}{9}\int_{R^{d}}
	\left\|
A^{\tau_{1}}\left(x,\alpha,\Lambda_{\star}\right)
	-
A^{\tau_{1}}\left(x,\alpha^{\star},\Lambda_{\star}\right)\right\|^{2}\nu \left(\mathrm{d}x\right)
	=-\frac{2}{9}\int_{R^{d}}
	\left\|
A \left(x,\alpha \right)
	-
A \left(x,\alpha^{\star} \right)\right\|^{2}\nu\left(\mathrm{d}x\right),\\
	\mathbb{V}_{2}\left(\beta;\vartheta^{\ast}\right)
&:=-\frac{1}{2}\int_{R^{d}}\left|b\left(x,\beta^{\star}\right)-b\left(x,\beta^{\star}\right)\right|^{2}\nu\left(\mathrm{d}x\right),
\end{align*}}
where $A^{\tau}\left(x,\alpha,\Lambda\right):=A\left(x,\alpha\right)+3\Lambda\mathbf{1}_{\left\{2\right\}}\left(\tau\right)$. For these functions, let us assume the following identifiability conditions hold.

\begin{itemize}
\item[{[A3]}] 
There exist $\chi_1\left(\alpha^{\star}\right)>0$ and $\chi_1'\left(\beta^{\star}\right)>0$
such that for all $\alpha\in\Theta_{1}$ and $\beta\in\Theta_{2}$,
$\mathbb{V}_{1}\left(\alpha;\theta^{\star}\right)\le -\chi_1\left(\theta^{\star}\right)\left|\alpha-\alpha^{\star}\right|^2$ 
and $\mathbb{V}_{2}\left(\beta;\theta^{\star}\right)\le -\chi_1'\left(\theta^{\star}\right)\left|\beta-\beta^{\star}\right|^2$.
\item[{[A4]}] 
For all $\tau_3 \in\left(1,2\right]$,
there exist $\chi_2\left(\alpha^{\star}\right)>0$ and $\chi_2'\left(\beta^{\star}\right)>0$
such that for all $\alpha\in\Theta_{1}$ and $\beta\in\Theta_{2}$,
$\mathbb{Y}_{1}^{\tau_{3}} \left(\alpha;\theta^{\star}\right)) \le -\chi_2\left(\theta^{\star}\right)\left|\alpha-\alpha^{\star}\right|^2$ 
and $\mathbb{Y}_{2}\left(\beta;\theta^{\star}\right) \le -\chi_2'\left(\theta^{\star}\right)\left|\beta-\beta^{\star}\right|^2$.
\end{itemize}
The next assumption is 
concerned with 
the moments of noise.
\begin{itemize}
\item[{[A5]}] For any $k > 0$, $\varepsilon_{ih_{n}}$ has $k$-th moment and the components of $\varepsilon_{ih_{n}}$ are independent of the other
components for all $i$, $\left\{w_{t}\right\}_{t\ge0}$ and $x_{0}$. In addition, for all odd integer $k$, $i=0,\ldots,n$, $n\in\mathbf{N}$, and $\ell=1,\ldots,d$, $\mathbf{E}_{\theta^{\star}}\left[\left(\varepsilon_{ih_{n}}^{\left(\ell\right)}\right)^{k}\right]=0$, and $\mathbf{E}_{\theta^{\star}}\left[\varepsilon_{ih_{n}}^{\otimes 2}\right]=I_{d}$.
\end{itemize}

\begin{itemize}
\item[{[A6]}] There exist $\gamma\in\left(2/3,1\right)$ and $\gamma' \in\left(0,\gamma\right]$ such that $n^{-\gamma}\le h_{n} \le n^{-\gamma'}$ for sufficiently large $n$.
\end{itemize}

\begin{remark}\label{remarkGamma}
    $\gamma'$ should be smaller than or equal to $\gamma$ such that $n^{-\gamma}\le n^{-\gamma'}$. $\gamma$ should be larger than $2/3$ and smaller than $1$; otherwise for some $C_{1},C_{2}>0$, $k_{\tau,n}\Delta_{\tau,n}^{2}=np_{\tau,n}h_{n}^{2}=nh_{n}^{2-1/\tau}\ge n^{1-\gamma(2-1/\tau)}\ge n^{1-3\gamma/2} > C_{1} > 0$, which must converge to 0 for $\tau=\tau_{3}$ in Theorem 2, or $T_{n}=nh_{n}\le n^{1-\gamma'}<C_{2}$ as $\gamma\ge \gamma'$, which must diverge in entire discussion.
    In addition, note that under [A6], $k_{n}=nh_{n}^{1/\tau_{i}}\ge n^{1-\gamma/\tau_{i}}\ge n^{1-\gamma}\to\infty$ for all $i=1,2,3$.
\end{remark}

\section{Multi-step estimator and PLDI}

\subsection{Setting of the initial and multi-step estimators}

We define sequences of local means such that  $\left\{\bar{Y}_{\tau,j}\right\}_{j=0,\ldots,k_{\tau,n}-1}$, $\left\{\bar{X}_{\tau,j}\right\}_{j=0,\ldots,k_{\tau,n}-1}$ and $\left\{\bar{\varepsilon}_{\tau,j}\right\}_{j=0,\ldots,k_{\tau,n}-1}$, where
\begin{align*}
	\bar{Y}_{\tau,j}&=\frac{1}{p_{\tau,n}}\sum_{i=0}^{p_{\tau,n}-1}Y_{j\Delta_{\tau,n}+ih_{n}},&
	\bar{X}_{\tau,j}&=\frac{1}{p_{\tau,n}}\sum_{i=0}^{p_{\tau,n}-1}X_{j\Delta_{\tau,n}+ih_{n}},&
	\bar{\varepsilon}_{\tau,j}&=\frac{1}{p_{\tau,n}}\sum_{i=0}^{p_{\tau,n}-1}\varepsilon_{j\Delta_{\tau,n}+ih_{n}}
\end{align*}
for $j=0,\ldots,k_{\tau,n}-1$ and $\tau=\tau_{1},\tau_{2},\tau_{3}$. For the detailed properties of local means, see \citet{Favetto-2014,Favetto-2016,Nakakita-Uchida-2017,Nakakita-Uchida-2018b,Nakakita-Uchida-2018c}.

We set for $i=1,2$, $\eta_{i}\in\left(\gamma,1\right]$ and $\underline{n}_{\eta_{i}}$ such that $\underline{n}_{\eta_{i}}=n^{\eta_{i}}\le n$ satisfying $\underline{T}_{\eta_{i},n}:=\underline{n}_{\eta_{i}}h_{n}\le T_{n}:=nh_{n}$ (then it holds $\underline{T}_{\eta_{i},n}\to \infty$), and correspondingly $\underline{k}_{\eta_{i},\tau_{i},n}:=\underline{n}_{\eta_{i}}/p_{\tau_{i},n}=n^{\eta_{i}}h_{n}^{1/\tau_{i}}$.

\begin{remark}\label{remarkEta}
$\eta_{1}$ and $\eta_{2}$
should be larger than $\gamma$ to support the divergence $\underline{T}_{\eta_{i},n}:=n^{\eta_{i}}h_{n}\to\infty$. 
$\eta_{1}$ and $\eta_{2}$
actually work to make $\left(\eta_{1}-\gamma/\tau_{1}\right)/\left(1-\gamma'/\tau_{3}\right)>0$ and $\left(\eta_{2}-\gamma\right)/\left(1-\gamma'\right)>0$, which are the quantities appearing in Remark \ref{remarkLowerBound}.
\end{remark}

Let us set $q_{1},q_{2}\in\left(0,1/2\right]$ and the following quasi likelihood functions:
\begin{align*}
	\mathbb{W}_{1,\tau_{1},n}\left(\alpha|\Lambda\right)&:=-\frac{1}{2}\sum_{j=1}^{\underline{k}_{\eta_{1},\tau_{1},n}-2}\left\|\Delta_{\tau_{1},n}^{-1}\left(\bar{Y}_{\tau_{1},j+1}-\bar{Y}_{\tau_{1},j}\right)^{\otimes2}-\frac{2}{3}A_{\tau_{1},n}\left(\bar{Y}_{\tau_{1},j-1},\alpha,\Lambda\right)\right\|^{2},\\
	\mathbb{W}_{2,\tau_{2},n}\left(\beta\right)&:=-\frac{1}{2}\sum_{j=1}^{\underline{k}_{\eta_{2},\tau_{2},n}-2}\Delta_{\tau_{2},n}^{-1}\left|\bar{Y}_{\tau_{2},j+1}-\bar{Y}_{\tau_{2},j}-\Delta_{\tau_{2},n}b\left(\bar{Y}_{\tau_{2},j-1},\beta\right)\right|^{2}.
\end{align*}
Using these quasi likelihood functions, we also define the next two functions such that
\begin{align*}
	\mathbb{H}_{1,\tau_{1},n}^{\left(0\right)}\left(\alpha|\Lambda\right)&=\frac{1}{\underline{k}_{\eta_{1},\tau_{1},n}^{1-2q_{1}}}\mathbb{W}_{1,\tau_{1},n}\left(\alpha|\Lambda\right),&
	\mathbb{H}_{2,\tau_{2},n}^{\left(0\right)}\left(\beta\right)&=\frac{1}{\underline{T}_{\eta_{2},n}^{1-2q_{2}}}\mathbb{W}_{2,\tau_{2},n}\left(\beta\right).
\end{align*}
Then, the initial estimators are defined as follows:
\begin{align*}
	\hat{\Lambda}_{n}&:=\frac{1}{2n}\sum_{i=0}^{n-1}\left(Y_{\left(i+1\right)h_{n}}-Y_{ih_{n}}\right)^{\otimes2},\\
	\tilde{\alpha}_{q_{1},\tau_{1},n}^{\left(0\right)}&:=\frac{\int_{\Theta_{1}}\alpha\exp\left(\mathbb{H}_{1,\tau_{1},n}^{\left(0\right)}\left(\alpha|\hat{\Lambda}_{n}\right)\right)\pi_{1}\left(\alpha\right)\mathrm{d}\alpha}{\int_{\Theta_{1}}\exp\left(\mathbb{H}_{1,\tau_{1},n}^{\left(0\right)}\left(\alpha|\hat{\Lambda}_{n}\right)\right)\pi_{1}\left(\alpha\right)\mathrm{d}\alpha},\\
	\tilde{\beta}_{q_{2},\tau_{2},n}^{\left(0\right)}&:=\frac{\int_{\Theta_{2}}\beta\exp\left(\mathbb{H}_{2,\tau_{2},n}^{\left(0\right)}\left(\beta\right)\right)\pi_{2}\left(\beta\right)\mathrm{d}\beta}{\int_{\Theta_{2}}\exp\left(\mathbb{H}_{2,\tau_{2},n}^{\left(0\right)}\left(\beta\right)\right)\pi_{2}\left(\beta\right)\mathrm{d}\beta},
\end{align*}
where {$0<\inf_{\alpha\in\Theta_{1}}\pi_{1}\left(\alpha\right)\le\sup_{\alpha\in\Theta_{1}}\pi_{1}\left(\alpha\right)<\infty$ and $0<\inf_{\beta\in\Theta_{2}}\pi_{2}\left(\beta\right)\le\sup_{\beta\in\Theta_{2}}\pi_{2}\left(\beta\right)<\infty$}. Note that $\hat{\Lambda}_{n}$ uses the whole data.

In the next place, we define the hybrid multi-step estimators. We introduce the quasi likelihood functions in \citet{Nakakita-Uchida-2018c} such that
{\small\begin{align*}
&\mathbb{H}_{1,n}\left(\alpha|\Lambda\right)
:=-\frac{1}{2}\sum_{j=1}^{k_{\tau_{3},n}-2}
\left(\left(\frac{2}{3}\Delta_{\tau_{3},n}A_{n}^{\tau_{3}}\left(\bar{Y}_{\tau_{3},j-1},\alpha,\Lambda\right)\right)^{-1}\left[\left(\bar{Y}_{\tau_{3},j+1}-\bar{Y}_{\tau_{3},j}\right)^{\otimes 2}\right]\right.\\
&\hspace{5cm}\left.+\log\det A_{n}^{\tau_{3}}\left(\bar{Y}_{\tau_{3},j-1},\alpha,\Lambda\right)\right),\\
&\mathbb{H}_{2,n}\left(\beta|\alpha\right)
:=-\frac{1}{2}\sum_{j=1}^{k_{\tau_{3},n}-2}
\left(\left(\Delta_{\tau_{3},n}A\left(\bar{Y}_{\tau_{3},j-1},\alpha\right)\right)^{-1}\left[\left(\bar{Y}_{\tau_{3},j+1}-\bar{Y}_{\tau_{3},j}-\Delta_{\tau_{3},n}b\left(\bar{Y}_{\tau_{3},j-1},\beta\right)\right)^{\otimes 2}\right]\right),
\end{align*}} where $A_{n}^{\tau_{3}}\left(x,\alpha,\Lambda\right):=A\left(x,\alpha\right)+3\Delta_{n}^{\frac{2-\tau_{3}}{\tau_{3}-1}}\Lambda$. 
As \citet{Kamatani-Uchida-2015} and \citet{Kamatani-et-al-2016}, let us denote
\begin{align*}
    J_{1,n}\left(\alpha\right) &:= \frac{1}{k_{\tau_{3},n}}\partial_{\alpha}^{2}\mathbb{H}_{1,\tau_{3},n}\left(\alpha|\hat{\Lambda}_{n}\right), & J_{1,n}\left(\beta\right) &:= \frac{1}{T_{n}}\partial_{\beta}^{2}\mathbb{H}_{2,\tau_{3},n}\left(\beta|\hat{\alpha}_{J_{1},n}\right),\\
    K_{1,n}\left(\alpha\right) &:= \left\{J_{1,n}\left(\alpha\right)\text{ is invertible.}\right\}, & K_{2,n}\left(\beta\right) &:= \left\{J_{2,n}\left(\beta\right)\text{ is invertible.}\right\},\\
    \overline{J}_{1,n}\left(\alpha\right) &:= J_{1,n}\left(\alpha\right)\mathbf{1}_{K_{1,n}\left(\alpha\right)}+I_{m_{1}}\mathbf{1}_{K_{1,n}^{c}\left(\alpha\right)},
    &  \overline{J}_{2,n}\left(\beta\right) &:=J_{2,n}\left(\beta\right)\mathbf{1}_{K_{2,n}\left(\beta\right)}+I_{m_{2}}\mathbf{1}_{K_{2,n}^{c}\left(\beta\right)}, 
\end{align*}such that for all $k=1,\ldots,J_{1}$, $J_{1}:=\lfloor -\log_{2}\left( q_{1}\left(\eta_{1}-\gamma/\tau_{1}\right)/\left(1-\gamma'/\tau_{3}\right)\right)\rfloor$, and $\hat{\alpha}_{0,n}:=\tilde{\alpha}_{q_{1},\tau_{1},n}^{\left(0\right)}$
\begin{align*}
    \hat{\alpha}_{k,n}&:= \hat{\alpha}_{k-1,n} - \overline{J}_{1,n}^{-1}\left(\hat{\alpha}_{k-1,n}\right)\frac{1}{k_{\tau_{3},n}}\partial_{\alpha}\mathbb{H}_{1,\tau_{3},n}\left(\hat{\alpha}_{k-1,n}|\hat{\Lambda}_{n}\right),
\end{align*}
and for all $k=1,\ldots,J_{2}$, 
$J_{2}=\lfloor -\log_{2} \left( q_{2}\left(\eta_{2}-\gamma\right)/\left(1-\gamma'\right) \right)\rfloor$, 
and $\hat{\beta}_{0,n}:=\tilde{\beta}_{q_{2},\tau_{2},n}^{\left(0\right)}$,
\begin{align*}
    \hat{\beta}_{k,n}&:= \hat{\beta}_{k-1,n} - \overline{J}_{2,n}^{-1}\left(\hat{\beta}_{k-1,n}\right)\frac{1}{T_{n}}\partial_{\beta}\mathbb{H}_{2,\tau_{3},n}\left(\hat{\beta}_{k-1,n}|\hat{\alpha}_{J_{1},n}\right).
\end{align*}

\subsection{PLDIs for the quasi likelihood functions}

To examine the asymptotic behaviours of $\hat{\alpha}_{J_{1}}$ and $\hat{\beta}_{J_{2},n}$, firstly we will see that the $L^{p}$-boundedness such that
\begin{align*}
	\sup_{n\in\mathbf{N}}\mathbf{E}\left[\left|\underline{k}_{\eta_{1},\tau_{1},n}^{q_{1}}\left(\tilde{\alpha}_{q_{1},\tau_{1},n}^{\left(0\right)}-\alpha^{\star}\right)\right|^{M}\right]+\sup_{n\in\mathbf{N}}\mathbf{E}\left[\left|\underline{T}_{\eta_{2},n}^{q_{2}}\left(\tilde{\beta}_{q_{2},\tau_{2},n}^{\left(0\right)}-\beta^{\star}\right)\right|^{M}\right]<\infty.
\end{align*}
To show these boundedness, we define some random quantities: random fields such that
\begin{align*}
	&\mathbb{V}_{1,\tau_{1},n}\left(\alpha;\vartheta^{\ast}\right)\\
	&:=\frac{1}{\underline{k}_{\eta_{1},\tau_{1},n}}
	\left(\mathbb{W}_{1,\tau_{1},n}\left(\alpha|\hat{\Lambda}_{n}\right)-\mathbb{W}_{1,\tau_{1},n}\left(\alpha^{\star}|\hat{\Lambda}_{n}\right)\right)\\
	&=-\frac{1}{2\underline{k}_{\eta_{1},\tau_{1},n}}\sum_{j=1}^{\underline{k}_{\eta_{1},\tau_{1},n}-2}
	\left(\left\|\frac{2}{3}
	A_{\tau_{1},n}\left(\bar{Y}_{\tau_{1},j-1},\alpha,\hat{\Lambda}_{n}\right)\right\|^{2}
	-\left\|\frac{2}{3}
	A_{\tau_{1},n}\left(\bar{Y}_{\tau_{1},j-1},\alpha^{\star},\hat{\Lambda}_{n}\right)\right\|^{2}\right.\\
	&\hspace{3cm}\left.-2\left(\Delta_{\tau_{1},n}^{-1}\left(\bar{Y}_{\tau_{1},j+1}-\bar{Y}_{\tau_{1},j}\right)^{\otimes2}\right)\left[\frac{2}{3}A_{\tau_{1},n}\left(\bar{Y}_{\tau_{1},j-1},\alpha,\hat{\Lambda}_{n}\right)\right]\right.\\
	&\hspace{3cm}\left.+2\left(\Delta_{\tau_{1},n}^{-1}\left(\bar{Y}_{\tau_{1},j+1}-\bar{Y}_{\tau_{1},j}\right)^{\otimes2}\right)\left[\frac{2}{3}A_{\tau_{1},n}\left(\bar{Y}_{\tau_{1},j-1},\alpha^{\star},\hat{\Lambda}_{n}\right)\right]\right),\\
	&\mathbb{V}_{2,\tau_{2},n}\left(\beta;\vartheta^{\star}\right)\\
	&:=\frac{1}{T_{n}}\left(\mathbb{W}_{2,\tau_{2},n}\left(\beta\right)-\mathbb{W}_{2,\tau_{2},n}\left(\beta^{\star}\right)\right)\\
	&=-\frac{1}{2\underline{k}_{\eta_{2},\tau_{2},n}}\sum_{j=1}^{\underline{k}_{\eta_{2},\tau_{2},n}-2}\left(\left|b\left(\bar{Y}_{\tau_{2},j-1},\beta\right)\right|^{2}-\left|b\left(\bar{Y}_{\tau_{2},j-1},\beta^{\star}\right)\right|^{2}\right.\\
	&\hspace{3.5cm}\left.-2\Delta_{\tau_{2},n}^{-1}\left(\bar{Y}_{\tau_{2},j+1}-\bar{Y}_{\tau_{2},j}\right)\left[b\left(\bar{Y}_{\tau_{2},j-1},\beta\right)\right]\right.\\
	&\hspace{3.5cm}\left.+2\Delta_{\tau_{2},n}^{-1}\left(\bar{Y}_{\tau_{2},j+1}-\bar{Y}_{\tau_{2},j}\right)\left[b\left(\bar{Y}_{\tau_{2},j-1},\beta^{\star}\right)\right]\right);
\end{align*}
score functions such that
\begin{align*}
	S_{1,\tau_{1},n}\left(\vartheta^{\star}\right)&:=-\frac{2}{3\underline{k}_{\eta_{1},\tau_{1},n}^{1-q_{1}}}\sum_{j=1}^{\underline{k}_{\eta_{1},\tau_{1},n}-2}\left(\partial_{\alpha}A\left(\bar{Y}_{j-1},\alpha^{\star}\right)\right)\\
	&\hspace{4cm}\left[\Delta_{\tau_{1},n}^{-1}\left(\bar{Y}_{\tau_{1},j+1}-
	\bar{Y}_{\tau_{1},j}\right)^{\otimes2}-\frac{2}{3}A_{\tau_{1},n}\left(\bar{Y}_{\tau_{1},j-1},\alpha^{\star},\hat{\Lambda}_{n}\right)\right],\\
	S_{2,\tau_{2},n}\left(\vartheta^{\star}\right)&:=-\frac{1}{\underline{T}_{\eta_{1},n}^{1-q_{2}}}\sum_{j=1}^{\underline{k}_{\eta_{2},\tau_{2},n}-2}\left(\partial_{\beta}b\left(\bar{Y}_{\tau_{2},j-1},\beta^{\star}\right)\right)
	\left[\left(\bar{Y}_{\tau_{2},j+1}-\bar{Y}_{\tau_{2},j}\right)-\Delta_{\tau_{2},n}b\left(\bar{Y}_{\tau_{2},j-1},\beta^{\star}\right)\right];
\end{align*}
the observed information matrices such that
\begin{align*}
	&\Gamma_{1,\tau_{1},n}\left(\alpha;\vartheta^{\star}\right)\left[u_{1}^{\otimes2}\right]\\
	&:=-\frac{2}{3\underline{k}_{\eta_{1},\tau_{1},n}}\sum_{j=1}^{\underline{k}_{\eta_{1},\tau_{1},n}-2}\left(\partial_{\alpha}^{2}A\left(\bar{Y}_{j-1},\alpha\right)\right)\\
	&\hspace{4cm}\left[u_{1}^{\otimes2},\Delta_{\tau_{1},n}^{-1}\left(\bar{Y}_{\tau_{1},j+1}-\bar{Y}_{\tau_{1},j}\right)^{\otimes2}-\frac{2}{3}A_{\tau_{1},n}\left(\bar{Y}_{\tau_{1},j-1},\alpha,\hat{\Lambda}_{n}\right)\right]\\
	&\hspace{1cm}+\frac{4}{9\underline{k}_{\eta_{1},\tau_{1},n}}\sum_{j=1}^{\underline{k}_{\eta_{1},\tau_{1},n}-2}\left(\partial_{\alpha}A\left(\bar{Y}_{\tau_{1},j-1},\alpha\right)\right)[u_{1}^{\otimes2},\partial_{\alpha}A\left(\bar{Y}_{\tau_{1},j-1},\alpha\right)],\\
	&\Gamma_{2,\tau_{2},n}\left(\beta;\vartheta^{\star}\right)\left[u_{2}^{\otimes2}\right]\\
	&:=-\frac{1}{\underline{k}_{\eta_{2},\tau_{2},n}}\sum_{j=1}^{\underline{k}_{\eta_{2},\tau_{2},n}-2}\left(\partial_{\beta}^{2}b\left(\bar{Y}_{j-1},\beta\right)\right)
	\left[u_{2}^{\otimes2},\Delta_{\tau_{2},n}^{-1}\left(\bar{Y}_{\tau_{2},j+1}-\bar{Y}_{\tau_{2},j}\right)-b\left(\bar{Y}_{\tau_{2},j-1},\beta\right)\right]\\
	&\hspace{1cm}+\frac{1}{\underline{k}_{\eta_{2},\tau_{2},n}}\sum_{j=1}^{\underline{k}_{\eta_{2},\tau_{2},n}-2}\left(\partial_{\beta}b\left(\bar{Y}_{\tau_{2},j-1},\beta\right)\right)
	\left[u_{2}^{\otimes2},\partial_{\beta}b\left(\bar{Y}_{\tau_{2},j-1},\beta\right)\right];
\end{align*}
and the limiting information matrices such that
\begin{align*}
	\Gamma_{1,\tau_{1}}\left(\vartheta^{\star}\right)\left[u_{1}^{\otimes2}\right]&:=\frac{4}{9}\int_{\mathbf{R}^{d}}\left(\partial_{\alpha}A\left(x,\alpha^{\star}\right)\right)[u_{1}^{\otimes2},\partial_{\alpha}A\left(x,\alpha^{\star}\right)]\nu_{0}\left(\mathrm{d}x\right),\\
	\Gamma_{2}\left(\vartheta^{\star}\right)\left[u_{2}^{\otimes2}\right]&:=\int_{\mathbf{R}^{d}}\left(\partial_{\beta}b\left(x,\beta^{\star}\right)\right)
	\left[u_{2}^{\otimes2},\partial_{\beta}b\left(x,\beta^{\star}\right)\right]\nu_{0}\left(\mathrm{d}x\right).
\end{align*}

\begin{lemma}\label{initAlphaLemma} 
Assume [A1]-[A2] and [A5]-[A6].
Moreover, assume $\underline{k}_{\eta_{1},\tau_{1},n}^{q_{1}}\Delta_{\tau_{1},n}\to0$.
\begin{enumerate}
\item For every $p>1$,
\begin{align*}
	\sup_{n\in\mathbf{N}}\mathbf{E}\left[\left|S_{1,\tau_{1},n}\left(\vartheta^{\star}\right)\right|^{p}\right]<\infty.
\end{align*}
\item Let $\epsilon_{1}=\epsilon_{0}/2$. Then for every $p>0$,
\begin{align*}
	\sup_{n\in\mathbf{N}}\mathbf{E}\left[\left(\sup_{\alpha\in\Theta_{1}}\underline{k}_{\eta_{1},\tau_{1},n}^{\epsilon_{1}}\left|\mathbb{V}_{1,\tau_{1},n}\left(\alpha;\vartheta^{\star}\right)-\mathbb{V}_{1,\tau_{1}}\left(\alpha;\vartheta^{\star}\right)\right|\right)^{p}\right]<\infty.
\end{align*}
\item For any $M_{3}>0$,
\begin{align*}
\sup_{n\in\mathbf{N}}\mathbf{E}_{\theta^{\star}}\left[\left(\underline{k}_{\eta_{1},\tau_{1},n}^{-1}\sup_{\vartheta\in\Xi}
\left|\partial_{\alpha}^{3}\mathbb{W}_{1,\tau_{1},n}\left(\alpha;\Lambda\right)\right|\right)^{M_{3}}\right]<\infty.
\end{align*}
\item Let $\epsilon_{1}=\epsilon_{0}/2$. Then for $M_{4}>0$,
\begin{align*}
\sup_{n\in\mathbf{N}}\mathbf{E}_{\theta^{\star}}\left[\left(\underline{k}_{\eta_{1},\tau_{1},n}^{\epsilon_{1}}\left|\Gamma_{1,\tau_{1},n}\left(\alpha^{\star};\vartheta^{\star}\right)-\Gamma_{1,\tau_{1}}\left(\vartheta^{\star}\right)\right|\right)^{M_{4}}\right]<\infty.
\end{align*}
\end{enumerate}
\end{lemma}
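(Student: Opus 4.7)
The plan is to follow the quasi likelihood analysis template developed in \citet{Nakakita-Uchida-2018c} for the full-sample quasi likelihoods $\mathbb{H}_{1,\tau_{3},n}$, now adapted to the reduced-sample initial quasi likelihood $\mathbb{W}_{1,\tau_{1},n}$, whose sum runs over $j=1,\ldots,\underline{k}_{\eta_{1},\tau_{1},n}-2$ instead of the whole set of local means. The two common ingredients are (i) the stochastic decomposition $\bar{Y}_{\tau_{1},j+1}-\bar{Y}_{\tau_{1},j}=(\bar{X}_{\tau_{1},j+1}-\bar{X}_{\tau_{1},j})+\Lambda^{\star 1/2}(\bar{\varepsilon}_{\tau_{1},j+1}-\bar{\varepsilon}_{\tau_{1},j})$ together with an Itô-type expansion of the $\bar{X}$-increment along the block of length $\Delta_{\tau_{1},n}$, and (ii) the conditional moment formulas for polynomials in $\bar{\varepsilon}_{\tau_{1},j\pm 1}$ under the filtration $\mathcal{H}_{j,n}^{\tau_{1}}$ using [A5]. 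The bound $\|\hat{\Lambda}_{n}-\Lambda^{\star}\|=O_{p}(n^{-1/2})$ available from the same references will let me replace $\hat{\Lambda}_{n}$ by $\Lambda^{\star}$ inside $A_{\tau_{1},n}$ up to negligible corrections.

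For part (1), I will split the summand of $S_{1,\tau_{1},n}(\vartheta^{\star})$ into a martingale-difference part with respect to $(\mathcal{H}_{j,n}^{\tau_{1}})_{j}$ and a conditional-bias part. The conditional expectation of $\Delta_{\tau_{1},n}^{-1}(\bar{Y}_{\tau_{1},j+1}-\bar{Y}_{\tau_{1},j})^{\otimes 2}$ given $\mathcal{H}_{j,n}^{\tau_{1}}$ is, up to $O(\Delta_{\tau_{1},n})$ terms, exactly $\tfrac{2}{3}A_{\tau_{1},n}(\bar{Y}_{\tau_{1},j-1},\alpha^{\star},\Lambda^{\star})$ by the standard local-mean computation; replacing $\Lambda^{\star}$ by $\hat{\Lambda}_{n}$ costs an extra $O_{p}(n^{-1/2})$. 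Burkholder--Davis--Gundy applied to the martingale part gives an $L^{p}$-bound of order $\underline{k}_{\eta_{1},\tau_{1},n}^{-1/2+q_{1}}$, and the drift contribution, after the normalisation $\underline{k}_{\eta_{1},\tau_{1},n}^{-(1-q_{1})}$, is of order $\underline{k}_{\eta_{1},\tau_{1},n}^{q_{1}}\Delta_{\tau_{1},n}\to 0$ by assumption; with $q_{1}\le 1/2$ both parts are uniformly $L^{p}$-bounded.

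For parts (2) and (4), I will first use Sobolev's embedding $W^{1,p}(\Theta_{1})\hookrightarrow C(\bar{\Theta}_{1})$ guaranteed by the setup to reduce the uniform-in-$\alpha$ bound to pointwise $L^{p}$ estimates for the random field and its $\alpha$-derivatives. At fixed $\alpha$, each term of the sum is a smooth function of $\bar{Y}_{\tau_{1},j-1}$ up to a centred martingale-difference remainder (again BDG of order $\underline{k}_{\eta_{1},\tau_{1},n}^{-1/2}$). Approximating $\bar{Y}_{\tau_{1},j-1}$ by $X_{(j-1)\Delta_{\tau_{1},n}}$ through the noise expansion of local means and then applying the ergodic theorem [A1]-(iv)--(v) combined with the Poisson-equation device gives the pointwise convergence at rate $\underline{k}_{\eta_{1},\tau_{1},n}^{-\epsilon_{1}}$; the identifiability form of the limit $\mathbb{V}_{1,\tau_{1}}$ follows from [A4] together with the explicit computation of the conditional second moment. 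Part (4) is the completely parallel argument for the observed information, the difference being that one only needs convergence at $\alpha^{\star}$ (so Sobolev is used to control the remainder in $\vartheta^{\star}$-direction).

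Part (3) is the easiest: after differentiating $\mathbb{W}_{1,\tau_{1},n}$ three times in $\alpha$, the summand is a polynomial in $\bar{Y}_{\tau_{1},j-1}$ and $\Delta_{\tau_{1},n}^{-1}(\bar{Y}_{\tau_{1},j+1}-\bar{Y}_{\tau_{1},j})^{\otimes 2}$ with coefficients of polynomial growth by [A2]; moment bounds for $X_{t}$ from [A1]-(iii), for the noise from [A5] and the basic $L^{p}$-estimate $\|\Delta_{\tau_{1},n}^{-1}(\bar{Y}_{\tau_{1},j+1}-\bar{Y}_{\tau_{1},j})^{\otimes 2}\|_{p}\le C\Delta_{\tau_{1},n}^{-1}$ (coming from the noise component, which dominates) yield a uniform $L^{p}$-bound for each summand, and the normalisation $\underline{k}_{\eta_{1},\tau_{1},n}^{-1}$ turns the sum into a bounded average; Sobolev once more handles the supremum over $\vartheta\in\Xi$. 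The main obstacle I anticipate is the careful bookkeeping in part (1): the local-mean construction produces an MA(1)-like correlation between successive $\bar{Y}$-increments so that the naive martingale decomposition has to be taken with respect to the block-shifted filtration $\mathcal{H}_{j,n}^{\tau_{1}}$, and the residual bias introduced by $\hat{\Lambda}_{n}$ must be controlled tightly enough that the $\underline{k}_{\eta_{1},\tau_{1},n}^{q_{1}}$-factor in the normalisation does not destroy the boundedness.
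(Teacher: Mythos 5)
Your strategy for parts (1), (2) and (4) is essentially the paper's own: replace $\hat{\Lambda}_{n}$ by $\Lambda^{\star}$ at a cost of order $\underline{k}_{\eta_{1},\tau_{1},n}^{q_{1}}n^{-1/2}$ using the moment bound on $\sqrt{n}(\hat{\Lambda}_{n}-\Lambda^{\star})$; decompose the score into a martingale part, handled by Burkholder's inequality applied separately to the three subsums over $j$ in a fixed residue class modulo $3$ (exactly the ``MA(1)'' overlap of successive local-mean increments that you flag), plus a conditional-bias part of order $\underline{k}_{\eta_{1},\tau_{1},n}^{q_{1}}\Delta_{\tau_{1},n}\to 0$; and use Sobolev's embedding together with the ergodic theorem and the Poisson-equation device for the uniform-in-$\alpha$ statements. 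One cosmetic slip: the quadratic bound on the limit $\mathbb{V}_{1,\tau_{1}}$ is [A3], not [A4], and in any case no identifiability assumption is needed (or assumed) for this lemma.

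Part (3) as written does not close. You bound each summand of $\partial_{\alpha}^{3}\mathbb{W}_{1,\tau_{1},n}$ via $\left\|\Delta_{\tau_{1},n}^{-1}\left(\bar{Y}_{\tau_{1},j+1}-\bar{Y}_{\tau_{1},j}\right)^{\otimes 2}\right\|_{p}\le C\Delta_{\tau_{1},n}^{-1}$ and then average over $\underline{k}_{\eta_{1},\tau_{1},n}-2$ terms; that gives $\underline{k}_{\eta_{1},\tau_{1},n}^{-1}\cdot\underline{k}_{\eta_{1},\tau_{1},n}\cdot\Delta_{\tau_{1},n}^{-1}=\Delta_{\tau_{1},n}^{-1}\to\infty$, not a bounded average. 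The estimate you actually need --- and which holds --- is $\left\|\Delta_{\tau_{1},n}^{-1}\left(\bar{Y}_{\tau_{1},j+1}-\bar{Y}_{\tau_{1},j}\right)^{\otimes 2}\right\|_{p}\le C$ uniformly in $j$ and $n$: by the local-mean expansion, $\left\|\bar{Y}_{\tau_{1},j+1}-\bar{Y}_{\tau_{1},j}\right\|_{2p}^{2}\le C\left(\Delta_{\tau_{1},n}+p_{\tau_{1},n}^{-1}\right)$, and since $p_{\tau_{1},n}^{-1}=h_{n}^{1/\tau_{1}}\le h_{n}^{1-1/\tau_{1}}=\Delta_{\tau_{1},n}$ for $\tau_{1}\in\left(1,2\right]$, the noise contribution is dominated by $\Delta_{\tau_{1},n}$ rather than dominating it. With this corrected per-term $O(1)$ bound, the rest of your argument for (3) (polynomial growth of the coefficients from [A2], moment bounds from [A1]-(iii) and [A5], Sobolev's inequality for the supremum over $\Xi$) goes through and coincides with the computation the paper delegates to the analogous step in Nakakita and Uchida (2018c).
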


\begin{remark}
	Let us assume $h_{n}:=n^{-7/10}$, $\tau_{1}=2$; then $T_{n}=n^{3/10}$, $p_{\tau_{1},n}=h_{n}^{-1/2}=n^{7/20}$, $\Delta_{\tau_{1},n}=p_{\tau_{1},n}h_{n}=n^{-7/20}$ and $k_{\tau_{1},n}=n^{13/20}$. If we set $\eta_{1}=47/60$, then $\underline{k}_{\eta_{1},\tau_{1},n}:=n^{13/30}$, $\underline{k}_{\eta_{1},\tau_{1},n}\Delta_{\tau_{1},n}=n^{1/12}\to \infty $ and for all $q_{1}\in\left(0,\frac{1}{2}\right]$, $\underline{k}_{\eta_{1},\tau_{1},n}^{q_{1}}\Delta_{\tau_{1},n}=n^{13q_{1}/30-7/20}\to0$ as $n\to\infty$.
	
	Now we 
	give an example of $\underline{k}_{\eta_{1},\tau_{1},n}$ which directly affects the burden of derivation of $\tilde{\alpha}_{q_{1},\tau_{1},n}^{\left(0\right)}$. If we have $n=10^{8}$, then $\underline{k}_{\eta_{1},\tau_{1},n}=10^{52/15}
	\approx2929$, while $k_{\tau_{1},n}=10^{26/5}\approx158489$.
	
	With respect to $J_{1}$, as $\gamma=\gamma'=7/10$, when we set $\tau_{3}=1.9$ and $q_{1}=1/4$, then $J_{1}=\lfloor -\log_{2}\left( q_{1}\left(\eta_{1}-\gamma/\tau_{1}\right)/\left(1-\gamma'/\tau_{3}\right)\right)\rfloor=\lfloor-\log_{2}\left(0.25\left(47/60-7/20\right)/\left(1-7/19\right)\right)\rfloor = 2$.
\end{remark}

\begin{lemma}\label{initBetaLemma}  
Assume [A1]-[A2] and [A5]-[A6].
Moreover, assume $\underline{k}_{\eta_{2},\tau_{2},n}^{q_{2}}\Delta_{\tau_{2},n}^{1+q_{2}}\to0$.
\begin{enumerate}
	\item For every $p>1$,
	\begin{align*}
	\sup_{n\in\mathbf{N}}\mathbf{E}\left[\left|S_{2,\tau_{2},n}\left(\vartheta^{\star}\right)\right|^{p}\right]<\infty.
	\end{align*}
	\item Let $\epsilon_{1}=\epsilon_{0}/2$. Then for every $p>0$,
	\begin{align*}
	\sup_{n\in\mathbf{N}}\mathbf{E}\left[\left(\sup_{\beta\in\Theta_{2}}\underline{T}_{\eta_{2},n}^{\epsilon_{1}}\left|\mathbb{V}_{2,\tau_{2},n}\left(\beta;\vartheta^{\star}\right)-\mathbb{V}_{2}\left(\beta;\vartheta^{\star}\right)\right|\right)^{p}\right]<\infty.
	\end{align*}
	\item For every $M_{3}>0$,
	\begin{align*}
	\sup_{n\in\mathbf{N}}\mathbf{E}\left[\left( \underline{T}_{\eta_{2},n}^{-1}\sup_{\beta\in\Theta_{2}}\left|\partial_{\beta}^{3}\mathbb{W}_{2,\tau_{2},n}\left(\beta\right)\right|\right)^{M_{3}}\right]&<\infty.
	\end{align*}
	\item Let $\epsilon_{1}=\epsilon_{0}/2$. Then for every $M_{4}>0$,
	\begin{align*}
	\sup_{n\in\mathbf{N}}\mathbf{E}\left[\left(\underline{T}_{\eta_{2},n}^{\epsilon_{1}}\left|\Gamma_{2,\tau_{2},n}\left(\beta^{\star};\vartheta^{\star}\right)-\Gamma_{2}\left(\vartheta^{\star}\right)\right|\right)^{M_{4}}\right]&<\infty.
	\end{align*}
\end{enumerate}
\end{lemma}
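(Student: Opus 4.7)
The plan is to mirror the four-part argument used for Lemma \ref{initAlphaLemma}, but on the drift-relevant time scale $\underline{T}_{\eta_2,n}=\underline{k}_{\eta_2,\tau_2,n}\Delta_{\tau_2,n}$ in place of the volatility sample-size scale $\underline{k}_{\eta_1,\tau_1,n}$. The central ingredient is the local-mean It\^o--Taylor expansion (as developed in \citet{Nakakita-Uchida-2017,Nakakita-Uchida-2018b,Nakakita-Uchida-2018c}), which gives
\[
\bar{Y}_{\tau_2,j+1}-\bar{Y}_{\tau_2,j} - \Delta_{\tau_2,n}\,b(\bar{Y}_{\tau_2,j-1},\beta^{\star})
= M_{j} + \Lambda^{1/2}(\bar{\varepsilon}_{\tau_2,j+1}-\bar{\varepsilon}_{\tau_2,j}) + R_{j},
\]
where $M_{j}$ is the diffusion martingale contribution with conditional variance of order $\Delta_{\tau_2,n}$, the noise difference has variance $O(\Lambda/p_{\tau_2,n})$ and symmetric distribution (by [A5]), and $R_{j}$ collects drift remainders whose sizes are controlled through [A2] together with the uniform-in-$t$ moment bound on $X_{t}$ from [A1]\,(iii).

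For claim (1), substituting this decomposition into $S_{2,\tau_2,n}(\vartheta^{\star})$ produces, up to an $L^{p}$-negligible remainder, a sum of $\{\mathcal{H}_{j,n}^{\tau_{2}}\}$-adapted martingale differences. Burkholder--Davis--Gundy on the diffusion part, combined with the independence and symmetry of $\{\bar{\varepsilon}_{\tau_2,j}\}$ coming from [A5] for the noise part, yields an $L^{p}$ bound of order $\underline{T}_{\eta_2,n}^{1/2}\Delta_{\tau_2,n}^{1/2}$ for the leading centered sum; the hypothesis $\underline{k}_{\eta_2,\tau_2,n}^{q_{2}}\Delta_{\tau_2,n}^{1+q_{2}}\to 0$ and the normalisation $\underline{T}_{\eta_2,n}^{1-q_{2}}$ then deliver the uniform $L^{p}$ bound. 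Claim (3) is the most routine: $\partial_{\beta}^{3}\mathbb{W}_{2,\tau_2,n}(\beta)$ expands into a sum of smooth functions of $\bar{Y}_{\tau_2,j-1}$ times the increments, each factor polynomially bounded in $|X|$ and $|\varepsilon|$ by [A2] and [A5], so H\"older's inequality after division by $\underline{T}_{\eta_2,n}$ closes the estimate uniformly in $\beta$.

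For claims (2) and (4), I would first establish pointwise $L^{p}$ convergence with rate $\underline{T}_{\eta_2,n}^{-\epsilon_{1}}$. After expanding the quadratic form, $\mathbb{V}_{2,\tau_2,n}(\beta;\vartheta^{\star})-\mathbb{V}_{2}(\beta;\vartheta^{\star})$ splits into (a) an empirical-mean-minus-invariant-integral term $\underline{k}_{\eta_2,\tau_2,n}^{-1}\sum_{j}g(\bar{Y}_{\tau_2,j-1})-\int g\,\mathrm{d}\nu$ with $g$ smooth and polynomially growing, to which the Poisson-equation device from [A1]\,(v) (as in the rate-quantified ergodic theorem of \citet{Uchida-Yoshida-2012}) gives the $\underline{T}_{\eta_2,n}^{-\epsilon_{1}}$ speed, and (b) martingale/noise remainders handled exactly as in claim (1). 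The same decomposition applies to $\Gamma_{2,\tau_2,n}(\beta^{\star};\vartheta^{\star})-\Gamma_{2}(\vartheta^{\star})$ to give (4). To upgrade to $\sup_{\beta\in\Theta_{2}}$ in (2), I would prove an analogous $L^{p}$ bound on $\partial_{\beta}\mathbb{V}_{2,\tau_2,n}$ and then invoke the Sobolev embedding $W^{1,p}(\Theta_{2})\hookrightarrow C(\bar{\Theta}_{2})$ declared in the model setup.

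The main technical obstacle is the rate bookkeeping. The noise-averaging variance $1/p_{\tau_2,n}$ and the diffusion variance $\Delta_{\tau_2,n}$ enter on different footings, and after summation over $j$ and division by $\underline{T}_{\eta_2,n}^{1-q_{2}}$, $\underline{T}_{\eta_2,n}$ or $\underline{T}_{\eta_2,n}^{1-\epsilon_{1}}$ the resulting quantity must remain $L^{p}$-bounded. The precise hypothesis $\underline{k}_{\eta_2,\tau_2,n}^{q_{2}}\Delta_{\tau_2,n}^{1+q_{2}}\to 0$ is calibrated to that purpose, and propagating it through all four claims while preserving the $\underline{T}_{\eta_2,n}^{\epsilon_{1}}$ factor in (2) and (4), in combination with the constraints of [A6] on $h_{n}$, is where the argument becomes delicate.
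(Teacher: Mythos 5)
Your outline follows essentially the same route as the paper: the local-mean expansion of Lemma \ref{localMeanExpansion} splits $S_{2,\tau_{2},n}$ into a diffusion-martingale part, a noise-difference part and an $e_{\tau_{2},j,n}$ remainder; Burkholder handles the first two (after the usual passage to the thinned subsequences $3j+i$ to restore genuine martingale structure, since $\zeta_{\tau_{2},j+2,n}'$ and $\bar{\varepsilon}_{\tau_{2},j+1}$ look beyond the $j$-th block); the conditional bias $\left|\mathbf{E}\left[e_{\tau_{2},j,n}|\mathcal{H}_{j,n}^{\tau_{2}}\right]\right|\le C\Delta_{\tau_{2},n}^{2}\left(1+\left|X_{j\Delta_{\tau_{2},n}}\right|\right)^{C}$ is exactly where the hypothesis $\underline{k}_{\eta_{2},\tau_{2},n}^{q_{2}}\Delta_{\tau_{2},n}^{1+q_{2}}\to0$ enters; and (2)--(4) reduce to a rate-quantified ergodic average plus Sobolev embedding exactly as in Lemma \ref{initAlphaLemma}. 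The only slip is cosmetic: the leading diffusion martingale is $O_{L^{p}}(\underline{T}_{\eta_{2},n}^{1/2})$ before normalisation (bounded after dividing by $\underline{T}_{\eta_{2},n}^{1-q_{2}}$ since $q_{2}\le1/2$), whereas your stated order $\underline{T}_{\eta_{2},n}^{1/2}\Delta_{\tau_{2},n}^{1/2}$ is the rate of the centered $e_{\tau_{2},j,n}$ contribution.
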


\begin{remark}
	Let us assume $h_{n}:=n^{-7/10}$, $\tau_{2}=1.2$; then $T_{n}=n^{3/10}$, $p_{\tau_{2},n}=h_{n}^{-5/6}=n^{7/12}$, $\Delta_{\tau_{2},n}=p_{\tau_{2},n}h_{n}=n^{-7/60}$, $\eta_{2}=5/6$ and $\underline{k}_{\eta_{2},\tau_{2},n}=n^{1/4}$. It holds for all $q_{2}\in\left(0,\frac{1}{2}\right]$, 
	$\underline{k}_{\eta_{2},\tau_{2},n}^{q_{2}}\Delta_{\tau_{2},n}^{1+q_{2}}=n^{q_{2}/4-7\left(1+q_{2}\right)/60}=n^{2q_{2}/15-7/60}\to0$.
	
	A concrete example of $\underline{k}_{\tau_{2},n}$, which directly affects the burden of derivation of $\tilde{\beta}_{q_{2},\tau_{2},n}^{\left(0\right)}$, is given as follows. If we have $n=10^{8}$, then $\underline{k}_{\tau_{2},n}=10^{2}$, while $k_{\tau,n}=10^{26/5}\approx158489$ for $\tau=2$. With respect to $J_{2}$, if $q_{2}=2^{-8}$, we have $J_{2}=\lfloor-\log_{2}\left( q_{2}\left(\eta_{2}-\gamma\right)/\left(1-\gamma'\right) \right)\rfloor=9$.
\end{remark}

In addition to these evaluations, we define the sets for all $r>0$, 
\begin{align*}
	\mathbb{U}_{1,q_{1},n}^{\left(0\right)}\left(\alpha^{\star}\right)&:=\left\{u_{1}\in\mathbf{R}^{m_{1}};\alpha^{\star}+\underline{k}_{\eta_{1},\tau_{1},n}^{-q_{1}}u_{1}\in\Theta_{1}\right\},\\
	V_{1,q_{1},n}^{\left(0\right)}\left(r\right)&:=\left\{u_{1}\in \mathbb{U}_{1,q_{1},n}^{\left(0\right)}\left(\alpha^{\star}\right); r\le \left|u_{1}\right|\right\},\\
	\mathbb{U}_{2,q_{2},n}^{\left(0\right)}\left(\beta^{\star}\right)&:=\left\{u_{2}\in\mathbf{R}^{m_{2}};\beta^{\star}+\underline{T}_{\eta_{2},n}^{-q_{2}}u_{2}\in\Theta_{2}\right\},\\
	V_{2,q_{2},n}^{\left(0\right)}\left(r\right)&:=\left\{u_{2}\in \mathbb{U}_{2,q_{2},n}^{\left(0\right)}\left(\beta^{\star}\right); r\le \left|u_{2}\right|\right\}
\end{align*}
and the statistical random fields: 
\begin{align*}
	\mathbb{Z}_{1,\tau_{1},n}^{\left(0\right)}\left(u_{1};\vartheta^{\star}\right)&:=\exp\left(\mathbb{H}_{1,\tau_{1},n}^{\left(0\right)}\left(\alpha^{\star}+\underline{k}_{\eta_{1},\tau_{1},n}^{-q_{1}}u_{1}|\hat{\Lambda}_{n}\right)-\mathbb{H}_{1,\tau_{1},n}^{\left(0\right)}\left(\alpha^{\star}|\hat{\Lambda}_{n}\right)\right),\\
	\mathbb{Z}_{2,\tau_{2},n}^{\left(0\right)}\left(u_{2};\vartheta^{\star}\right)&:=\exp\left(\mathbb{H}_{2,\tau_{2},n}^{\left(0\right)}\left(\beta^{\star}+\underline{T}_{\eta_{2},n}^{-q_{2}}u_{2}\right)-\mathbb{H}_{2,\tau_{2},n}^{\left(0\right)}\left(\beta^{\star}\right)\right).
\end{align*}

We have the following results as for the random fields and consequently the Bayes type estimators $\tilde{\alpha}_{q_{1},\tau_{1},n}^{\left(0\right)}$ and $\tilde{\beta}_{q_{2},\tau_{2},n}^{\left(0\right)}$ using Lemma \ref{initAlphaLemma} and Lemma \ref{initBetaLemma}.
\begin{theorem}\label{PLDI}
Let $L>0$ and $i=1,2$. 
Assume [A1]-[A3] and [A5]-[A6].
Then, there exists $C\left(L\right)>0$ such that
\begin{align*}
P\left[\sup_{u_{i}\in V_{i,q_{i},n}\left(r\right)}\mathbb{Z}_{i,\tau_{i},n}^{\left(0\right)}\left(u_{i}\right)\ge e^{-r}\right] \le \frac{C\left(L\right)}{r^{L}}    
\end{align*}
for all $r>0$ and $n\in\mathbf{N}$. Moreover, for all $M>0$,
\begin{align*}
	\sup_{n\in\mathbf{N}}\mathbf{E}\left[\left|\underline{k}_{\eta_{1},\tau_{1},n}^{q_{1}}\left(\tilde{\alpha}_{q_{1},\tau_{1},n}^{\left(0\right)}-\alpha^{\star}\right)\right|^{M}\right]+\sup_{n\in\mathbf{N}}\mathbf{E}\left[\left|\underline{T}_{\eta_{2},n}^{q_{2}}\left(\tilde{\beta}_{q_{2},\tau_{2},n}^{\left(0\right)}-\beta^{\star}\right)\right|^{M}\right]<\infty.
\end{align*}
\end{theorem}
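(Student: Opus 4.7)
The plan is to deploy Yoshida's QLA machinery in its Bayesian form, with Lemmas~\ref{initAlphaLemma} and \ref{initBetaLemma} supplying the four preparatory moment bounds (score tightness, uniform convergence of the contrast, third-derivative control, and convergence of the observed information). I sketch the argument for $(\tilde{\alpha}_{q_{1},\tau_{1},n}^{(0)}, \mathbb{Z}_{1,\tau_{1},n}^{(0)})$; the $\beta$-case is structurally identical, with Lemma~\ref{initBetaLemma}, $\underline{T}_{\eta_{2},n}$, $q_{2}$, and $\mathbb{V}_{2}$ replacing Lemma~\ref{initAlphaLemma}, $\underline{k}_{\eta_{1},\tau_{1},n}$, $q_{1}$, and $\mathbb{V}_{1,\tau_{1}}$. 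The backbone is the scaling identity
\[
\log\mathbb{Z}_{1,\tau_{1},n}^{(0)}(u_{1};\vartheta^{\star})
= \underline{k}_{\eta_{1},\tau_{1},n}^{2q_{1}}\,\mathbb{V}_{1,\tau_{1},n}\bigl(\alpha^{\star}+\underline{k}_{\eta_{1},\tau_{1},n}^{-q_{1}}u_{1};\vartheta^{\star}\bigr),
\]
which is immediate from the definitions of $\mathbb{H}^{(0)}_{1,\tau_{1},n}$ and $\mathbb{V}_{1,\tau_{1},n}$.

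Combining this identity with the identifiability from [A3] and the uniform convergence of Lemma~\ref{initAlphaLemma}(2) yields, on $V_{1,q_{1},n}^{(0)}(r)$,
\[
\log\mathbb{Z}_{1,\tau_{1},n}^{(0)}(u_{1})
\le -\chi_{1}(\theta^{\star})|u_{1}|^{2} + \underline{k}_{\eta_{1},\tau_{1},n}^{2q_{1}}\,\sup_{\alpha\in\Theta_{1}}\bigl|\mathbb{V}_{1,\tau_{1},n}(\alpha;\vartheta^{\star})-\mathbb{V}_{1,\tau_{1}}(\alpha;\vartheta^{\star})\bigr|.
\]
On the event that the second term is bounded by $\tfrac{1}{2}\chi_{1}(\theta^{\star})r^{2}$, the right-hand side is $\le -\tfrac{1}{2}\chi_{1}(\theta^{\star})r^{2}\le -r$ for $r$ sufficiently large, so by Markov's inequality applied to Lemma~\ref{initAlphaLemma}(2) --- with a moment order chosen large enough to absorb the factor $\underline{k}_{\eta_{1},\tau_{1},n}^{2q_{1}}$ against the decay rate $\underline{k}_{\eta_{1},\tau_{1},n}^{-\epsilon_{1}}$ --- the probability of the complementary event is $\le C(L)/r^{L}$ for arbitrary $L>0$. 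This delivers the PLDI.

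For the $L^{M}$-bound on the rescaled Bayes estimator I would use the standard representation
\[
\underline{k}_{\eta_{1},\tau_{1},n}^{q_{1}}\bigl(\tilde{\alpha}_{q_{1},\tau_{1},n}^{(0)}-\alpha^{\star}\bigr)
= \frac{\int u_{1}\,\mathbb{Z}_{1,\tau_{1},n}^{(0)}(u_{1})\,\pi_{1}(\alpha^{\star}+\underline{k}_{\eta_{1},\tau_{1},n}^{-q_{1}}u_{1})\,\mathrm{d}u_{1}}{\int \mathbb{Z}_{1,\tau_{1},n}^{(0)}(u_{1})\,\pi_{1}(\alpha^{\star}+\underline{k}_{\eta_{1},\tau_{1},n}^{-q_{1}}u_{1})\,\mathrm{d}u_{1}},
\]
split the numerator at $|u_{1}|=r$, bound the tail via the PLDI just established, and integrate $r^{M-1}\cdot r^{-L}\,\mathrm{d}r$ for $L>M+m_{1}$; the denominator is controlled from below by restricting integration to a fixed ball around $u_{1}=0$, on which parts (1), (3), (4) of Lemma~\ref{initAlphaLemma} together with the Taylor expansion make $\mathbb{Z}_{1,\tau_{1},n}^{(0)}$ tight away from $0$. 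The principal technical hurdle I anticipate is the bookkeeping of $\underline{k}_{\eta_{1},\tau_{1},n}^{2q_{1}}$ against $\underline{k}_{\eta_{1},\tau_{1},n}^{-\epsilon_{1}}$ in the error estimate --- this is precisely why Lemma~\ref{initAlphaLemma}(2) is stated with arbitrary-$p$ moments rather than tightness alone --- together with the reparametrisation staying inside $\Theta_{1}$, for which the hypothesis $\underline{k}_{\eta_{1},\tau_{1},n}^{q_{1}}\Delta_{\tau_{1},n}\to 0$ of Lemma~\ref{initAlphaLemma} is tailor-made. Modulo these calibrations, the remainder is a direct application of the abstract PLDI theorem of Yoshida (2011).
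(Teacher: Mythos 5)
Your overall framing---feed the four moment bounds of Lemmas~\ref{initAlphaLemma} and \ref{initBetaLemma} plus the identifiability [A3] into the abstract PLDI theorem (Theorem~3 of Yoshida, 2011)---is exactly the paper's route, the scaling identity $\log\mathbb{Z}_{1,\tau_{1},n}^{(0)}(u_{1})=\underline{k}_{\eta_{1},\tau_{1},n}^{2q_{1}}\mathbb{V}_{1,\tau_{1},n}(\alpha^{\star}+\underline{k}_{\eta_{1},\tau_{1},n}^{-q_{1}}u_{1})$ is correct, and the second half (splitting the Bayes representation at $|u_{1}|=r$, integrating the tail against the PLDI, and bounding the denominator from below via the local quadratic expansion) is the standard and correct argument. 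The problem is the explicit derivation you offer for the PLDI itself.

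The single global bound
\begin{align*}
\log\mathbb{Z}_{1,\tau_{1},n}^{(0)}(u_{1})\le-\chi_{1}(\theta^{\star})|u_{1}|^{2}+\underline{k}_{\eta_{1},\tau_{1},n}^{2q_{1}}\sup_{\alpha}\left|\mathbb{V}_{1,\tau_{1},n}-\mathbb{V}_{1,\tau_{1}}\right|
\end{align*}
followed by Markov's inequality does not deliver the PLDI for all $r$ uniformly in $n$. The complementary event is $\{\underline{k}_{\eta_{1},\tau_{1},n}^{2q_{1}}\sup_{\alpha}|\mathbb{V}_{1,\tau_{1},n}-\mathbb{V}_{1,\tau_{1}}|>\tfrac{1}{2}\chi_{1}r^{2}\}$, whose threshold is independent of $n$, so Markov at order $p$ gives $C(p)\,r^{-2p}\,\underline{k}_{\eta_{1},\tau_{1},n}^{p(2q_{1}-\epsilon_{1})}$. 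If $2q_{1}>\epsilon_{1}$ this diverges in $n$ for every fixed $r$, and raising $p$ multiplies the divergent factor by the same power as the gain in $r$, so nothing is ``absorbed'' by choosing the moment order large --- that remark is the gap. This is the relevant regime: $q_{1}$ may be as large as $1/2$ (it equals $1/2$ in the paper's simulations) while $\epsilon_{1}=\epsilon_{0}/2$ cannot exceed the LLN rate, so $2q_{1}>\epsilon_{1}$ in general. The identifiability-plus-uniform-LLN argument only covers the outer region $|u_{1}|\ge\delta\underline{k}_{\eta_{1},\tau_{1},n}^{q_{1}}$, where the deficit $-\chi_{1}\delta^{2}\underline{k}_{\eta_{1},\tau_{1},n}^{2q_{1}}$ itself carries the factor $\underline{k}_{\eta_{1},\tau_{1},n}^{2q_{1}}$ and dominates the error. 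For intermediate $r$ one must instead use the locally asymptotically quadratic representation $\log\mathbb{Z}_{1,\tau_{1},n}^{(0)}(u_{1})=S_{1,\tau_{1},n}[u_{1}]-\tfrac{1}{2}\Gamma_{1,\tau_{1}}[u_{1}^{\otimes2}]+r_{1,\tau_{1},n}(u_{1})$, with the score tightness (Lemma~\ref{initAlphaLemma}(1)), the third-derivative bound (3) and the information convergence (4) controlling the remainder over dyadic annuli --- this is precisely what Yoshida's Theorem~3 packages, and it is why those three estimates are needed for the PLDI and not merely for the denominator bound, which is the only place you invoke them. (A smaller point: the hypothesis $\underline{k}_{\eta_{1},\tau_{1},n}^{q_{1}}\Delta_{\tau_{1},n}\to0$ controls the bias terms in $S_{1,\tau_{1},n}$, not the reparametrisation staying inside $\Theta_{1}$; the latter is built into the definition of $\mathbb{U}_{1,q_{1},n}^{(0)}(\alpha^{\star})$.)
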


\begin{remark}\label{remarkLowerBound}
    Note that 
    \begin{align*}
        \underline{k}_{\eta_{1},\tau_{1},n}&=n^{\eta_{1}} h_{n}^{1/\tau_{1}}\ge n^{\eta_{1}}\left(n^{-\gamma}\right)^{1/\tau_{1}}=n^{\eta_{1}-\gamma/\tau_{1}}=n^{\left(1-\gamma'/\tau_{3}\right)\left(1-\gamma'/\tau_{3}\right)^{-1}\left(\eta_{1}-\gamma/\tau_{1}\right)}\\
        &= \left(nn^{-\gamma'/\tau_{3}}\right)^{\left(\eta_{1}-\gamma/\tau_{1}\right)/\left(1-\gamma'/\tau_{3}\right)} \ge \left(nh^{1/\tau_{3}}\right)^{\left(\eta_{1}-\gamma/\tau_{1}\right)/\left(1-\gamma'/\tau_{3}\right)} = k_{\tau_{3},n}^{\left(\eta_{1}-\gamma/\tau_{1}\right)/\left(1-\gamma'/\tau_{3}\right)}
    \end{align*}
    and
    \begin{align*}
        \underline{T}_{\eta_{2},n}&=n^{\eta_{2}}h_{n}\ge n^{\eta_{2}}n^{-\gamma}=\left(n^{1-\gamma'}\right)^{\left(\eta_{2}-\gamma\right)/\left(1-\gamma'\right)}\ge T_{n}^{\left(\eta_{2}-\gamma\right)/\left(1-\gamma'\right)};
    \end{align*}
    therefore,
    \begin{align*}
            \sup_{n\in\mathbf{N}}\mathbf{E}\left[\left|k_{\tau,n}^{q_{1}\left(\eta_{1}-\gamma/\tau_{1}\right)/\left(1-\gamma'/\tau_{3}\right)}\left(\tilde{\alpha}_{q_{1},\tau_{1},n}^{\left(0\right)}-\alpha^{\star}\right)\right|^{M}\right]\le \sup_{n\in\mathbf{N}}\mathbf{E}\left[\left|\underline{k}_{\eta_{1},\tau_{1},n}^{q_{1}}\left(\tilde{\alpha}_{q_{1},\tau_{1},n}^{\left(0\right)}-\alpha^{\star}\right)\right|^{M}\right]<\infty,
    \end{align*}
    and
    \begin{align*}
         \sup_{n\in\mathbf{N}}\mathbf{E}\left[\left|T_{n}^{q_{2}\left(\eta_{2}-\gamma\right)/\left(1-\gamma'\right)}\left(\tilde{\beta}_{q_{2},\tau_{2},n}^{\left(0\right)}-\beta^{\star}\right)\right|^{M}\right]\le \sup_{n\in\mathbf{N}}\mathbf{E}\left[\left|\underline{T}_{\eta_{2},n}^{q_{2}}\left(\tilde{\beta}_{q_{2},\tau_{2},n}^{\left(0\right)}-\beta^{\star}\right)\right|^{M}\right]<\infty.
    \end{align*}
\end{remark}

\begin{en-text}
In order to state asymptotic properties of the hybrid estimator, we give the following notation.
\begin{itemize}
	\item We define the real-valued function as for $l_{1},l_{2},l_{3},l_{4}=1,\ldots,d$:
	\begin{align*}
	&V\left((l_1,l_2),(l_3,l_4)\right)\\
	&:=\sum_{k=1}^{d}\left(\Lambda_{\star}^{1/2}\right)^{(l_1,k)}\left(\Lambda_{\star}^{1/2}\right)^{(l_2,k)}\left(\Lambda_{\star}^{1/2}\right)^{(l_3,k)}\left(\Lambda_{\star}^{1/2}\right)^{(l_4,k)}
	\left(\mathbf{E}_{\theta^{\star}}\left[\left|\epsilon_{0}^{\left(k\right)}\right|^4\right]-3\right)\\
	&\qquad+\frac{3}{2}\left(\Lambda_{\star}^{(l_1,l_3)}\Lambda_{\star}^{(l_2,l_4)}+\Lambda_{\star}^{(l_1,l_4)}\Lambda_{\star}^{(l_2,l_3)}\right),
	\end{align*}
	and with the function $\sigma$ as for $i=1,\ldots,d$ and $j=i,\ldots,d$,
	\begin{align*}
		\sigma\left(i,j\right):=\begin{cases}
		j&\text{ if }i=1,\\
		\sum_{\ell=1}^{i-1}\left(d-\ell+1\right)+j-i+1 & \text{ if }i>1,
		\end{cases}
	\end{align*}
	we define the matrix $W_{1}$ as for $i_{1},i_{2}=1,\ldots,d(d+1)/2$,
	\begin{align*}
		W_{1}^{\left(i_{1},i_{2}\right)}:=V\left(\sigma^{-1}\left(i_{1}\right),\sigma^{-1}\left(i_{2}\right)\right).
	\end{align*}
	\item Let
	\begin{align*}
	&\left\{B_{\kappa}(x)\left|\kappa=1,\ldots,m_1,\ B_{\kappa}=(B_{\kappa}^{(j_1,j_2)})_{j_1,j_2}\right.\right\},\\
	&\left\{f_{\lambda}(x)\left|\lambda=1,\ldots,m_2,\ f_{\lambda}=(f^{(1)}_{\lambda},\ldots,f^{(d)}_{\lambda})\right.\right\}
	\end{align*}
	be
	sequences of $\Re^d\otimes \Re^d$-valued functions and $\Re^d$-valued ones respectively such that the components of themselves and their derivative with respect to $x$ are polynomial growth functions for all $\kappa$ and $\lambda$. 
	Then we define the following matrix-valued functionals, for $\bar{B}_{\kappa}:=\frac{1}{2}\left(B_{\kappa}+B_{\kappa}^T\right)$,
	\begin{eqnarray*}
	&& \left(W_2^{(\tau)}\left(\left\{B_{\kappa}:\kappa=1,\ldots,m_{1}\right\}\right)\right)^{(\kappa_1,\kappa_2)}\\
	&&:= \left\{
		 \begin{array}{lc}
		\nu\left(\mathrm{tr}\left\{\left(\bar{B}_{\kappa_1}A\bar{B}_{\kappa_2}A\right)(\cdot)\right\}\right) &\text{ if }\tau\in(1,2),\\
		\nu\left(\mathrm{tr}\left\{\left(\bar{B}_{\kappa_1}A\bar{B}_{\kappa_2}A+4\bar{B}_{\kappa_1}A\bar{B}_{\kappa_2}\Lambda_{\star}+12\bar{B}_{\kappa_1}\Lambda_{\star}\bar{B}_{\kappa_2}\Lambda_{\star}\right)(\cdot)\right\}\right)
		&\text{ if }\tau=2,		
		\end{array}
		\right. \\
	&&\left(W_3(\left\{f_{\lambda}:\lambda=1,\ldots,m_{2}\right\})\right)^{(\lambda_1,\lambda_2)}\\
	&&:=
		\nu\left(\left(f_{\lambda_1}A\left(f_{\lambda_2}\right)^T\right)(\cdot)\right),
	\end{eqnarray*}
	where $\nu=\nu_{\theta^{\star}}$ is the invariant measure of $X_{t}$ discussed in the following assumption [A1]-(iv), and for all function $f$ on $\Re^{d}$, $\nu\left(f\left(\cdot\right)\right):=\int_{\Re^{d}} f\left(x\right)\nu\left(\mathrm{d}x\right)$.
	\item Let
\begin{align*}
\mathcal{I}^{\tau}\left(\vartheta^{\star}\right)&:=\mathrm{diag}\left\{W_{1}, \mathcal{I}^{(2,2),\tau},\mathcal{I}^{(3,3)}\right\}\left(\vartheta^{\star}\right), \\
\mathcal{J}^{\tau}\left(\vartheta^{\star}\right)&:=\mathrm{diag}\left\{I_{d(d+1)/2},\mathcal{J}^{(2,2),\tau},\mathcal{J}^{(3,3)}\right\}(\vartheta^{\star}).
\end{align*}
\item For $i_1,i_2\in\left\{1,\ldots,m_1\right\}$,
\begin{align*}
\mathcal{I}^{(2,2),\tau}(\vartheta^{\star})&:=
W_2^{(\tau)}\left(\left\{\frac{3}{4}\left( A^{\tau}\right)^{-1}\left(\partial_{\alpha^{(k_1)}}A\right)\left(A^{\tau}\right)^{-1}(\cdot,\vartheta^{\star}):k_{1}=1,\ldots,m_{1}\right\}\right),\\
\mathcal{J}^{(2,2),\tau}(\vartheta^{\star})&:=
\left[\frac{1}{2}\nu\left(\mathrm{tr}\left\{\left(A^{\tau}\right)^{-1}\left(\partial_{\alpha^{(i_1)}}A\right)\left(A^{\tau}\right)^{-1}
	\left(\partial_{\alpha^{(i_2)}}A\right)\right\}(\cdot,\vartheta^{\star})\right)\right]_{i_1,i_2},
\end{align*}
\item
For $j_1,j_2\in\left\{1,\ldots,m_2\right\}$,
\begin{align*}
\mathcal{I}^{(3,3)}(\theta^{\star})&=\mathcal{J}^{(3,3)}(\theta^{\star}):=
\left[\nu\left(\left(A\right)^{-1}\left[\partial_{\beta^{(j_1)}}b,\partial_{\beta^{(j_2)}}b\right](\cdot,\theta^{\star})\right)\right]_{j_1,j_2}.
\end{align*}
\item
$\hat{\theta}_{\varepsilon,n}:=\mathrm{vech}\hat{\Lambda}_{n}$ and $\theta_{\varepsilon}^{\star}:=\mathrm{vech}\Lambda^{\star}$.
\item Let
	\begin{align*}
		\left(\zeta_{0},\zeta_{1},\zeta_{2}\right)
		\sim N_{d\left(d+1\right)/2+m_{1}+m_{2}}\left(\mathbf{0},\left(\mathcal{J}^{\tau}\left(\vartheta^{\star}\right)\right)^{-1}\left(\mathcal{I}^{\tau}\left(\vartheta^{\star}\right)\right)\left(\mathcal{J}^{\tau}\left(\vartheta^{\star}\right)\right)^{-1}
		\right)
	\end{align*}
\end{itemize}
\end{en-text}

In order to state asymptotic properties of the hybrid multi-step estimators, we introduce the notation as follows.
Let $V\left((l_1,l_2),(l_3,l_4)\right)$ be the real-valued function as for $l_{1},l_{2},l_{3},l_{4}=1,\ldots,d$,
	\begin{align*}
	&V\left((l_1,l_2),(l_3,l_4)\right)\\
	&:=\sum_{k=1}^{d}\left(\Lambda_{\star}^{1/2}\right)^{(l_1,k)}\left(\Lambda_{\star}^{1/2}\right)^{(l_2,k)}\left(\Lambda_{\star}^{1/2}\right)^{(l_3,k)}\left(\Lambda_{\star}^{1/2}\right)^{(l_4,k)}
	\left(\mathbf{E}_{\theta^{\star}}\left[\left|\epsilon_{0}^{\left(k\right)}\right|^4\right]-3\right)\\
	&\qquad+\frac{3}{2}\left(\Lambda_{\star}^{(l_1,l_3)}\Lambda_{\star}^{(l_2,l_4)}+\Lambda_{\star}^{(l_1,l_4)}\Lambda_{\star}^{(l_2,l_3)}\right),
	\end{align*}
	and the function $\sigma$ is defined as for $i=1,\ldots,d$ and $j=i,\ldots,d$,
	\begin{align*}
		\sigma\left(i,j\right):=\begin{cases}
		j&\text{ if }i=1,\\
		\sum_{\ell=1}^{i-1}\left(d-\ell+1\right)+j-i+1 & \text{ if }i>1.
		\end{cases}
	\end{align*}
	Furthermore, for $i_{1},i_{2}=1,\ldots,d(d+1)/2$,
	\begin{align*}
		W_{1}^{\left(i_{1},i_{2}\right)}:=V\left(\sigma^{-1}\left(i_{1}\right),\sigma^{-1}\left(i_{2}\right)\right).
	\end{align*}
Let
	$\left\{ B_{\kappa}(x)\left|\kappa=1,\ldots,m_1 \right. \right\}$ and        
	$\left\{f_{\lambda}(x)\left|\lambda=1,\ldots,m_2 \right. \right\}$
	be
	sequences of $\mathbf{R}^d\otimes \mathbf{R}^d$-valued functions and $\mathbf{R}^d$-valued ones respectively such that their components and their derivatives with respect to $x$ are polynomial growth functions for all $\kappa$ and $\lambda$. 
	For $\bar{B}_{\kappa}(x):=\frac{1}{2}\left(B_{\kappa}(x)+B_{\kappa}(x)^T\right)$,
	\begin{eqnarray*}
	&& \left(W_2^{(\tau)}\left(\left\{B_{\kappa}(x):\kappa=1,\ldots,m_{1}\right\}\right)\right)^{(\kappa_1,\kappa_2)}\\
	&&:= \left\{
		 \begin{array}{lc}
		 \int_{\mathbf{R}^d} \mathrm{tr}\left\{\left(\bar{B}_{\kappa_1}A\bar{B}_{\kappa_2}A\right)(x)\right\} \nu(dx)  &\text{ if }\tau\in(1,2),\\
		  \int_{\mathbf{R}^d}   \mathrm{tr}\left\{\left(\bar{B}_{\kappa_1}A\bar{B}_{\kappa_2}A
		+4\bar{B}_{\kappa_1}A\bar{B}_{\kappa_2}\Lambda_{\star}+12\bar{B}_{\kappa_1}\Lambda_{\star}\bar{B}_{\kappa_2}\Lambda_{\star}\right)(x)   \right\}   \nu(dx)   
		&\text{ if }\tau=2.		
		\end{array}
		\right. 
	\end{eqnarray*}
Set
\begin{align*}
\mathcal{I}^{\tau}\left(\vartheta^{\star}\right)&:=\mathrm{diag}\left\{W_{1}, \mathcal{I}^{(2,2),\tau},\mathcal{I}^{(3,3)}\right\}\left(\vartheta^{\star}\right), \\
\mathcal{J}^{\tau}\left(\vartheta^{\star}\right)&:=\mathrm{diag}\left\{I_{d(d+1)/2},\mathcal{J}^{(2,2),\tau},\mathcal{J}^{(3,3)}\right\}(\vartheta^{\star}), \\
\mathcal{I}^{(2,2),\tau}(\vartheta^{\star})&:=
W_2^{(\tau)}\left(\left\{\frac{3}{4}\left( A^{\tau}\right)^{-1}\left(\partial_{\alpha^{(k_1)}}A\right)\left(A^{\tau}\right)^{-1}(\cdot,\vartheta^{\star}):k_{1}=1,\ldots,m_{1}\right\}\right),\\
\mathcal{J}^{(2,2),\tau}(\vartheta^{\star})&:=
\left(\frac{1}{2}  \int_{\mathbf{R}^d} \mathrm{tr}\left\{\left(A^{\tau}\right)^{-1}\left(\partial_{\alpha^{(i_1)}}A\right)\left(A^{\tau}\right)^{-1}
	\left(\partial_{\alpha^{(i_2)}}A\right)\right\}(x,\vartheta^{\star}) \nu(dx)\right)_{i_1,i_2 =1,\ldots,m_1}, \\
\mathcal{I}^{(3,3)}(\theta^{\star})&=\mathcal{J}^{(3,3)}(\theta^{\star}):=
\left(  \int_{\mathbf{R}^d} \left(A\right)^{-1}\left[\partial_{\beta^{(j_1)}}b,\partial_{\beta^{(j_2)}}b\right](x,\theta^{\star}) \nu(dx) \right)_{j_1,j_2=1,\ldots,m_2}.
\end{align*}
Let
$\hat{\theta}_{\varepsilon,n}:=\mathrm{vech}\hat{\Lambda}_{n}$, $\theta_{\varepsilon}^{\star}:=\mathrm{vech}\Lambda^{\star}$
and 
\begin{align*}
		\left(\zeta_{0},\zeta_{1},\zeta_{2}\right)
		\sim N_{d\left(d+1\right)/2+m_{1}+m_{2}}
		\left(\mathbf{0},\left(\mathcal{J}^{\tau}\left(\vartheta^{\star}\right)\right)^{-1}\left(\mathcal{I}^{\tau}\left(\vartheta^{\star}\right)\right)\left(\mathcal{J}^{\tau}\left(\vartheta^{\star}\right)\right)^{-1}
		\right).
	\end{align*}

The asymptotic normality and convergence of moments of the hybrid multi-step estimators with the initial Bayes type estimators are as follows.

\begin{theorem}\label{result} 
 Assume [A1]-[A6].
Then, under $k_{\tau_{3},n}\Delta_{\tau_{3},n}^{2}\to0$, 
\begin{align*}
    \left(\sqrt{n}\left(\hat{\theta}_{\varepsilon,n}-\theta_{\varepsilon}^{\star}\right),\sqrt{k_{\tau_{3},n}}\left(\hat{\alpha}_{J_{1},n}-\alpha^{\star}\right),\sqrt{T_{n}}\left(\hat{\beta}_{J_{2},n}-\beta^{\star}\right)\right)
    \to^{\mathcal{L}}
    \left(\zeta_{0},\zeta_{1},\zeta_{2}\right).
\end{align*}
   Moreover, 
    \begin{align*}
        \mathbf{E}\left[f\left(\sqrt{n}\left(\hat{\theta}_{\varepsilon,n}-\theta_{\varepsilon}^{\star}\right),\sqrt{k_{\tau_{3},n}}\left(\hat{\alpha}_{J_{1},n}-\alpha^{\star}\right),\sqrt{T_{n}}\left(\hat{\beta}_{J_{2},n}-\beta^{\star}\right)\right)\right]\to \mathbf{E}\left[f\left(\zeta_{0},\zeta_{1},\zeta_{2}\right)\right]
    \end{align*}
    for all continuous functions f of at most polynomial growth.
\end{theorem}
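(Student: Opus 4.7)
The plan is to combine the initial $L^{M}$-bounds from Theorem~\ref{PLDI} with the classical rate-doubling property of Newton-Raphson iterations, as carried out for noise-free diffusions in \citet{Kamatani-Uchida-2015} and \citet{Kaino-Uchida-2018a,Kaino-Uchida-2018b}, and then to feed the final step into the QLA infrastructure for the noisy quasi likelihoods of \citet{Nakakita-Uchida-2018c}. The first ingredient is to verify, for $\mathbb{H}_{1,\tau_{3},n}$ and $\mathbb{H}_{2,\tau_{3},n}$, the analogues of Lemma~\ref{initAlphaLemma} and Lemma~\ref{initBetaLemma}: $L^{M}$-boundedness and convergence (in $L^{M}$) of the normalised scores $k_{\tau_{3},n}^{-1/2}\partial_{\alpha}\mathbb{H}_{1,\tau_{3},n}(\alpha^{\star}|\hat{\Lambda}_{n})$ and $T_{n}^{-1/2}\partial_{\beta}\mathbb{H}_{2,\tau_{3},n}(\beta^{\star}|\hat{\alpha}_{J_{1},n})$, of the observed informations $J_{1,n}(\alpha)\to\mathcal{J}^{(2,2),\tau_{3}}(\vartheta^{\star})$ and $J_{2,n}(\beta)\to\mathcal{J}^{(3,3)}(\vartheta^{\star})$ uniformly on shrinking neighbourhoods, and PLDI for $\mathbb{H}_{1,\tau_{3},n}$ and $\mathbb{H}_{2,\tau_{3},n}$. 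These are available from \citet{Nakakita-Uchida-2018c} under [A1]--[A6].

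Next, for the $\alpha$-iteration, a second-order Taylor expansion at $\alpha^{\star}$ gives
\begin{align*}
\hat{\alpha}_{k,n}-\alpha^{\star}
&= \bigl[I-\bar{J}_{1,n}^{-1}(\hat{\alpha}_{k-1,n})J_{1,n}(\alpha^{\star})\bigr](\hat{\alpha}_{k-1,n}-\alpha^{\star}) \\
&\quad-\bar{J}_{1,n}^{-1}(\hat{\alpha}_{k-1,n})\,\frac{1}{k_{\tau_{3},n}}\partial_{\alpha}\mathbb{H}_{1,\tau_{3},n}(\alpha^{\star}|\hat{\Lambda}_{n})+R_{k,n},
\end{align*}
with $\|R_{k,n}\|_{p}\lesssim \|\hat{\alpha}_{k-1,n}-\alpha^{\star}\|_{2p}^{2}$ by the $L^{M}$-bound on $\sup_{\alpha}|\partial_{\alpha}^{3}\mathbb{H}_{1,\tau_{3},n}(\alpha)|/k_{\tau_{3},n}$. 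Using PLDI for $\mathbb{H}_{1,\tau_{3},n}$ to show $P(K_{1,n}^{c}(\hat{\alpha}_{k-1,n}))$ decays polynomially, and the $L^{M}$-convergence $\bar{J}_{1,n}^{-1}(\hat{\alpha}_{k-1,n})\to\mathcal{J}^{(2,2),\tau_{3}}(\vartheta^{\star})^{-1}$, the coefficient in the first bracket is $o_{L^{M}}(1)$; consequently, if $a_{k-1}$ is the rate at which $k_{\tau_{3},n}^{a_{k-1}}(\hat{\alpha}_{k-1,n}-\alpha^{\star})$ is $L^{M}$-bounded, then $a_{k}=\min(1/2,2a_{k-1})$. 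Theorem~\ref{PLDI} and Remark~\ref{remarkLowerBound} give $a_{0}\ge q_{1}(\eta_{1}-\gamma/\tau_{1})/(1-\gamma'/\tau_{3})$, so by the choice of $J_{1}=\lfloor-\log_{2}a_{0}\rfloor$ we reach $a_{J_{1}}=1/2$ after $J_{1}$ steps. An identical calculation, now with $T_{n}$ in place of $k_{\tau_{3},n}$ and using $a_{0}\ge q_{2}(\eta_{2}-\gamma)/(1-\gamma')$, gives $\sqrt{T_{n}}$-rate for $\hat{\beta}_{J_{2},n}$; the replacement of $\alpha^{\star}$ by $\hat{\alpha}_{J_{1},n}$ in $\mathbb{H}_{2,\tau_{3},n}(\beta|\hat{\alpha}_{J_{1},n})$ contributes only an $o_{L^{M}}(T_{n}^{-1/2})$ perturbation since $\sqrt{T_{n}}(\hat{\alpha}_{J_{1},n}-\alpha^{\star})=O_{L^{M}}(\sqrt{T_{n}/k_{\tau_{3},n}})=O_{L^{M}}(h_{n}^{(1-1/\tau_{3})/2})=o_{L^{M}}(1)$ under [A6].

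With the rates saturated, the leading expansions read
\begin{align*}
\sqrt{k_{\tau_{3},n}}(\hat{\alpha}_{J_{1},n}-\alpha^{\star}) &= -\mathcal{J}^{(2,2),\tau_{3}}(\vartheta^{\star})^{-1}\frac{1}{\sqrt{k_{\tau_{3},n}}}\partial_{\alpha}\mathbb{H}_{1,\tau_{3},n}(\alpha^{\star}|\hat{\Lambda}_{n})+o_{L^{M}}(1),\\
\sqrt{T_{n}}(\hat{\beta}_{J_{2},n}-\beta^{\star}) &= -\mathcal{J}^{(3,3)}(\vartheta^{\star})^{-1}\frac{1}{\sqrt{T_{n}}}\partial_{\beta}\mathbb{H}_{2,\tau_{3},n}(\beta^{\star}|\alpha^{\star})+o_{L^{M}}(1).
\end{align*}
For $\hat{\Lambda}_{n}$, a direct decomposition shows the dominant term is $\frac{1}{2n}\sum_{i}(\Lambda_{\star}^{1/2}(\varepsilon_{(i+1)h_{n}}-\varepsilon_{ih_{n}}))^{\otimes 2}-\Lambda_{\star}$; the contributions from $X_{(i+1)h_{n}}-X_{ih_{n}}$ and the cross terms vanish in $L^{M}$ at rate $\sqrt{h_{n}}$, and a standard CLT for i.i.d.\ symmetric matrix sums yields $\sqrt{n}(\hat{\theta}_{\varepsilon,n}-\theta_{\varepsilon}^{\star})\Rightarrow N(0,W_{1})$. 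Joint convergence to $(\zeta_{0},\zeta_{1},\zeta_{2})$ with the block-diagonal covariance then follows from the joint CLT for $(\sqrt{n}(\hat{\theta}_{\varepsilon,n}-\theta_{\varepsilon}^{\star}),k_{\tau_{3},n}^{-1/2}\partial_{\alpha}\mathbb{H}_{1,\tau_{3},n}(\alpha^{\star}|\Lambda_{\star}),T_{n}^{-1/2}\partial_{\beta}\mathbb{H}_{2,\tau_{3},n}(\beta^{\star}|\alpha^{\star}))$ established in \citet{Nakakita-Uchida-2018c}, the mutual orthogonality of the three score components (stemming from symmetry of $\varepsilon$, martingale orthogonality of diffusion and drift scores, and the ratio $T_{n}/n=h_{n}\to 0$) giving the $\mathrm{diag}$ structure. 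Finally, the $L^{M}$-bounds on $(\sqrt{n}(\hat{\theta}_{\varepsilon,n}-\theta_{\varepsilon}^{\star}),\sqrt{k_{\tau_{3},n}}(\hat{\alpha}_{J_{1},n}-\alpha^{\star}),\sqrt{T_{n}}(\hat{\beta}_{J_{2},n}-\beta^{\star}))$ yield uniform integrability of $f(\cdot)$ for polynomial-growth $f$, hence convergence of moments.

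The main obstacle I expect is the bookkeeping of the rate-doubling: one has to simultaneously control (i) the truncation event $K_{1,n}^{c}$ via PLDI, (ii) the second-order remainder $R_{k,n}$ in every $L^{p}$, and (iii) the perturbations from plugging $\hat{\Lambda}_{n}$ and $\hat{\alpha}_{J_{1},n}$ into the $\alpha$- and $\beta$-scores, so that each Newton step delivers exactly the $\min(1/2,2a_{k-1})$ improvement in the $L^{M}$-rate rather than only in probability. Once this is set up, the argument follows the \citet{Kamatani-Uchida-2015}/\citet{Kaino-Uchida-2018a} template, the only new input being the noisy-observation QLA machinery from \citet{Nakakita-Uchida-2018c}.
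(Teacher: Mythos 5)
Your proposal follows essentially the same route as the paper's proof: the initial $L^{M}$-rate from Theorem~\ref{PLDI} (via Remark~\ref{remarkLowerBound}), the Newton--Raphson rate-doubling recursion controlled in every $L^{p}$ together with the polynomial bound on $P(K_{1,n}^{c}(\hat{\alpha}_{k-1,n}))$ and the $L^{p}$-bounds on the inverse Hessian and third derivative, the choice of $J_{1},J_{2}$ so that $2^{J_{1}-1}q_{1}'>1/4$, and finally the QLA results of \citet{Nakakita-Uchida-2018c} for the scores, for $\hat{\Lambda}_{n}$ and for the joint limit, with the saturated $L^{M}$-bounds giving convergence of moments by uniform integrability. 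The one point to tighten is that your bracket $\bigl[I-\bar{J}_{1,n}^{-1}(\hat{\alpha}_{k-1,n})J_{1,n}(\alpha^{\star})\bigr]$ must be shown to be $O_{L^{M}}\left(\left|\hat{\alpha}_{k-1,n}-\alpha^{\star}\right|\right)$, not merely $o_{L^{M}}(1)$, for the doubling $a_{k}=\min(1/2,2a_{k-1})$ to hold; the paper sidesteps this by Taylor-expanding $\partial_{\alpha}\mathbb{H}_{1,\tau_{3},n}(\alpha^{\star}|\hat{\Lambda}_{n})$ around $\hat{\alpha}_{k-1,n}$ with the Hessian evaluated at $\hat{\alpha}_{k-1,n}$ itself, so the linear term cancels exactly and only the quadratic remainder survives.
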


\section{Example and simulation results}

We consider the three-dimensional diffusion process.
\begin{eqnarray*}  
\mathrm{d}X_t &=& b(X_t, \beta)\mathrm{d}t +a(X_t, \alpha)\mathrm{d}w_t, \quad t \geq 0, \quad
X_0 = \left(1,1,1 \right)^{\star},
\end{eqnarray*}
where
\begin{eqnarray*}
b(X_t, \beta) &=& \left(
\begin{array}{ccc}
1- \beta_1 X_{t,1} - 10 \sin(\beta_2 X_{t,2}^2)	\\
1- \beta_3 X_{t,2} - 10 \sin( \beta_4 X_{t,3}^2)	\\
1 - \beta_5 X_{t,3} - 10 \sin (\beta_6 X_{t,1}^2)    \\
\end{array}
\right),
\\
a(X_t,\alpha) &=&
\left(
\begin{array}{ccc}
\sqrt{\alpha_1(2 + \cos(X_{t,3}^2))} & 0 & 0 \\
0 & \sqrt{\alpha_2(2 + \cos(X_{t,1}^2))} & 0 \\
0 &0 &  \sqrt{\alpha_3(2 + \cos(X_{t,2}^2))} \\
\end{array}
\right).
\end{eqnarray*}
Moreover,
the true parameter values are 
$$(\beta_1^*, \beta_2^*, \beta_3^*, \beta_4^*, \beta_5^*, \beta_6^*)
= (1,2,2,3,3,4)$$
and 
$(\alpha_1^*, \alpha_2^*, \alpha_3^*)= (1,2,3)$.
The parameter space is $\Theta =[0.01,10]^{9}$.

The noisy data $\left\{Y_{ih_{n}}\right\}_{i=0,\ldots,n}$ are defined as for all $i=0,\ldots,n$,
\begin{align*}
    Y_{ih_{n}} := X_{ih_{n}} + \Lambda^{1/2}\varepsilon_{ih_{n}},
\end{align*}
where $n=5 \times 10^7$, $h_n=\frac{4}{10^6}$,  $T=nh_n= 200$, 
$\Lambda=10^{-3} I_3$, $I_3$ is the $3\times 3$-identity matrix, 
$\left\{\varepsilon_{ih_{n}}\right\}_{i=0,\ldots,n}$ is the i.i.d.\ sequence of $3$-dimensional normal random vectors with $\mathbf{E}\left[\varepsilon_{0}\right]=\mathbf{0}$ and $\mathrm{Var}\left(\varepsilon_{0}\right)=I_{3}$.

For the true model, 100 independent sample paths are generated by the Euler-Maruyama scheme,
and the mean and the standard deviation (s.d.) for the estimators in Theorems 1 and 2 are computed
and shown in Tables 1-9 below.
The personal computer with Intel i7-6950X (3.00GHz) was used
for the simulations.
In each table,
the time means the computation time of estimators for one sample path.


Table 1 shows the simulation results of the estimator $\hat{\Lambda}_n =(\Lambda_{n,i,j})_{i,j=1,2,3}$ 
of $\Lambda =(\Lambda_{ij})_{i,j=1,2,3}$.

Tables 2 and 3 show 
the simulation results of the adaptive ML type estimator $(\hat{\alpha}_{A, n}, \hat{\beta}_{A, n})$
with the initial value being the true value, where
\begin{eqnarray*}
\hat{\alpha}_{A, n} &=& \arg \sup_{\alpha \in \Theta_1}   \mathbb{H}_{1, n}\left(\alpha|\hat{\Lambda}_n \right), \\
\hat{\beta}_{A, n} &=& \arg \sup_{\beta \in \Theta_2} \mathbb{H}_{2, n}\left(\beta| \hat{\alpha}_{A, n} \right),
\end{eqnarray*}
the quasi log likelihood functions are  that
{\small\begin{align*}
&\mathbb{H}_{1, n}\left(\alpha|\Lambda\right)
=-\frac{1}{2}\sum_{j=1}^{k_{\tau_{3},n}-2}
\left(\left(\frac{2}{3}\Delta_{\tau_{3},n}A_{n}^{\tau_{3}}\left(\bar{Y}_{\tau_{3},j-1},\alpha,\Lambda\right)\right)^{-1}\left[\left(\bar{Y}_{\tau_{3},j+1}-\bar{Y}_{\tau_{3},j}\right)^{\otimes 2}\right]\right.\\
&\hspace{5cm}\left.+\log\det A_{n}^{\tau_{3}}\left(\bar{Y}_{\tau_{3},j-1},\alpha,\Lambda\right)\right),\\
&\mathbb{H}_{2, n}\left(\beta|\alpha\right)
=-\frac{1}{2}\sum_{j=1}^{k_{\tau_{3},n}-2}
\left(\left(\Delta_{\tau_{3},n}A\left(\bar{Y}_{\tau_{3},j-1},\alpha\right)\right)^{-1}\left[\left(\bar{Y}_{\tau_{3},j+1}-\bar{Y}_{\tau_{3},j}-\Delta_{\tau_{3},n}b\left(\bar{Y}_{\tau_{3},j-1},\beta\right)\right)^{\otimes 2}\right]\right),
\end{align*}} 
\noindent 
the local mean $\left\{\bar{Y}_{\tau,j}\right\}_{j=0,\ldots,k_{\tau,n}-1}$ is defined as
\begin{align*}
	\bar{Y}_{\tau,j}&=\frac{1}{p_{\tau,n}}\sum_{i=0}^{p_{\tau,n}-1}Y_{j\Delta_{\tau,n}+ih_{n}}.
\end{align*}
Here $\tau_3=2.0$, $k_{\tau_3, n}=10^5$, $p_{\tau_3, n}=500$, 
$\Delta_{\tau_3,n}=2\times10^{-3}$, $T=k_{\tau_3, n} \Delta_{\tau_3,n}=200$,
$A_{n}^{\tau_{3}}\left(x,\alpha,\Lambda\right)=
A\left(x,\alpha\right)+3\Delta_{\tau_3, n}^{\frac{2-\tau_{3}}{\tau_{3}-1}}\Lambda
=A(x, \alpha)=a a^T (x,\alpha)$. 
The adaptive ML type estimator $(\hat{\alpha}_{A, n}, \hat{\beta}_{A, n})$
are obtained by means of  {\bf optim()} based on the "L-BFGS-B" method 
in the R Language.

%


\begin{table}[h]
\caption{estimator of $\Lambda$}
\begin{tabular}{c|cccccc||c} \hline
		&$\hat{\Lambda}_{11}(0.001)$&$\hat{\Lambda}_{12}(0)$&{$\hat{\Lambda}_{13}(0)$}& 
		$\hat{\Lambda}_{22}(0.001)$&$\hat{\Lambda}_{23}(0)$&$\hat{\Lambda}_{33}(0.001)$ & time(sec.)
\\ \hline
 &0.001&  0.000&  0.000& 0.001& 0.000& 0.001  &
 \\
  &(0.000) &(0.000) &(0.000) &(0.000) &(0.000) &(0.000)  & 0.47
 \\   \hline
\end{tabular}
\end{table}

\begin{table}[h]
\begin{center}
\caption{adaptive ML type estimator of $\alpha$ with the initial value being the true value}
\begin{tabular}{c|ccc||c} \hline
		&$\hat{\alpha}_{1}(1)$&$\hat{\alpha}_{2}(2)$&$\hat{\alpha}_{3}(3)$ & time(sec.) \\ \hline
 &1.048 &2.057 &3.053  & \\
 true & (0.005) & (0.01) & (0.014)  & 17 \\   \hline
\end{tabular}
\end{center}
\end{table}

\begin{table}[h]
\caption{adaptive ML type estimator of $\beta$ with the initial value being the true value}
\begin{tabular}{c|cccccc||c} \hline
		&$\hat{\beta}_{1}(1)$&$\hat{\beta}_{2}(2)$&$\hat{\beta}_{3}(2)$&$\hat{\beta}_{4}(3)$&
		$\hat{\beta}_{5}(3)$&$\hat{\beta}_{6}(4)$ &time(sec.)
\\ \hline
 &1.021 &1.988 &2.043 &2.953 &3.048 &3.984&   
 \\
 true &(0.046) &(0.01)  &(0.099) &(0.038) &(0.162) &(0.028) &  61  
 \\   \hline
\end{tabular}
\end{table}


From Tables 1-3, we see  that all estimators have good behaviour.



\begin{table}[h]
\begin{center}
\caption{adaptive ML type estimator of $\alpha$ with the initial value being the uniform random number on $\Theta$}
\begin{tabular}{c|ccc||c} \hline
		&$\hat{\alpha}_{1}(1)$&$\hat{\alpha}_{2}(2)$&$\hat{\alpha}_{3}(3)$ & time(sec.) \\ \hline
 &1.048 &2.057 &3.053  & \\
 unif &(0.005) &(0.01)  &(0.014)  &  33  \\   \hline
\end{tabular}
\end{center}

\caption{adaptive ML type estimator of $\beta$ with the initial value being the uniform random number on $\Theta$}
\begin{tabular}{c|cccccc||c} \hline
		&$\hat{\beta}_{1}(1)$&$\hat{\beta}_{2}(2)$&$\hat{\beta}_{3}(2)$&$\hat{\beta}_{4}(3)$&
		$\hat{\beta}_{5}(3)$&$\hat{\beta}_{6}(4)$ &time(sec.)
\\ \hline
 &0.384  &2.634 &1.344 &2.860  &2.415 &4.948 &   
 \\
 true &(0.246) &(3.934) &(0.398) &(3.847) &(0.470)  &(4.496) &  53  
  \\   \hline
\end{tabular}
\end{table}

\noindent
Tables 4 and 5  show the simulation results of the adaptive ML type estimator
$(\hat{\alpha}_{A, n}, \hat{\beta}_{A, n})$ 
with the initial value being the uniform random number on $\Theta$.
Most of estimators of $\beta$ have considerable biases
since the initial value may be far from the true value.
As is well known,
it is essential to select an appropriate initial value for {optimisation}.

Tables 6 and 7  show the simulation results of the initial Bayes type estimators with uniform priors 
defined as 
\begin{align*}
	\tilde{\alpha}_{q_{1},\tau_{1},n}^{\left(0\right)}&=\frac{\int_{\Theta_{1}}\alpha\exp\left(\mathbb{H}_{1,\tau_{1},n}^{\left(0\right)}\left(\alpha|\hat{\Lambda}_{n}\right)\right)
	\mathrm{d}\alpha}{\int_{\Theta_{1}}\exp\left(\mathbb{H}_{1,\tau_{1},n}^{\left(0\right)}\left(\alpha|\hat{\Lambda}_{n}\right)\right)
	\mathrm{d}\alpha},\\
	\tilde{\beta}_{q_{2},\tau_{2},n}^{\left(0\right)}&=\frac{\int_{\Theta_{2}}\beta\exp\left(\mathbb{H}_{2,\tau_{2},n}^{\left(0\right)}\left(\beta\right)\right)
	\mathrm{d}\beta}{\int_{\Theta_{2}}\exp\left(\mathbb{H}_{2,\tau_{2},n}^{\left(0\right)}\left(\beta\right)\right)
	\mathrm{d}\beta},
\end{align*}
where
\begin{align*}
	\mathbb{H}_{1,\tau_{1},n}^{\left(0\right)}\left(\alpha|\Lambda\right)&=\frac{1}{\underline{k}_{\eta_{1},\tau_{1},n}^{1-2q_{1}}}\mathbb{W}_{1,\tau_{1},n}\left(\alpha|\Lambda\right),&
	\mathbb{H}_{2,\tau_{2},n}^{\left(0\right)}\left(\beta\right)&=\frac{1}{\underline{T}_{\eta_{2},n}^{1-2q_{2}}}\mathbb{W}_{2,\tau_{2},n}\left(\beta\right),
\end{align*}
\begin{align*}
	\mathbb{W}_{1,\tau_{1},n}\left(\alpha|\Lambda\right)&:=-\frac{1}{2}\sum_{j=1}^{\underline{k}_{\eta_{1},\tau_{1},n}-2}\left\|\Delta_{\tau_{1},n}^{-1}\left(\bar{Y}_{\tau_{1},j+1}-\bar{Y}_{\tau_{1},j}\right)^{\otimes2}-\frac{2}{3}A_{\tau_{1},n}\left(\bar{Y}_{\tau_{1},j-1},\alpha,\Lambda\right)\right\|^{2},\\
	\mathbb{W}_{2,\tau_{2},n}\left(\beta\right)&:=-\frac{1}{2}\sum_{j=1}^{\underline{k}_{\eta_{2},\tau_{2},n}-2}\Delta_{\tau_{2},n}^{-1}\left|\bar{Y}_{\tau_{2},j+1}-\bar{Y}_{\tau_{2},j}-\Delta_{\tau_{2},n}b\left(\bar{Y}_{\tau_{2},j-1},\beta\right)\right|^{2}.
\end{align*}
Here we set $\eta_1=\eta_2= 61/70$ and the initial Bayes estimator of $\alpha$ are obtained by the reduced data with
$q_1=1/2$, $\tau_1=2.0$, $\underline{k}_{\eta_1, \tau_1, n}=10^4$, $\Delta_{\tau_1, n}=2\times10^{-3}$.
Moreover, the initial Bayes estimator of $\beta$ are derived from the reduced data with 
$q_2=1/2$, $\tau_2=2.0$, $\underline{k}_{\eta_2, \tau_2, n}=10^4$, $\Delta_{\tau_2, n}=2\times10^{-3}$,
$\underline{T}_{\eta_2, n} =\underline{k}_{\eta_2, \tau_2, n} \Delta_{\tau_2, n}=20$.

Furthermore, the initial Bayes type estimators of $\alpha$ and $\beta$ are calculated with MpCN method 
proposed by  \citet{Kamatani-2018} for 
$10^3$ and $10^6$ Markov chains and $10^2$  and $10^5$ burn-in iterations, respectively.

\begin{en-text}
Moreover, the hybrid estimator $(\hat{\alpha}_{4,n}^{(3)}, \hat{\beta}_{4,n}^{(4)})$
is given by
\beas
\hat{\alpha}_{4,n}^{(3)} 
&=& \arg \sup_{\alpha \in \Theta_\alpha}  V_{n}^{(3)}(\alpha \ | \  
\hat{\alpha}_{4,n}^{(1)}, \hat{\beta}_{4,n}^{(2)}),
\\
\hat{\beta}_{4,n}^{(4)}  
&=& \arg \sup_{\beta \in \Theta_\beta}  V_{n}^{(4)}(\beta \ | \  
\hat{\alpha}_{4,n}^{(3)}, \hat{\beta}_{4,n}^{(2)}).
\eeas
\end{en-text}


\begin{table}[h]
\begin{center}
\caption{initial Bayes type estimator of $\alpha$ with reduced data ($\underline{k}_{\eta_1, \tau_1, n}=10^4$)}
\begin{tabular}{c|ccc||c} \hline
		&$\tilde{\alpha}^{(0)}_{1}(1)$&$\tilde{\alpha}^{(0)}_{2}(2)$&$\tilde{\alpha}^{(0)}_{3}(3)$ & time(sec.) \\ \hline
 &1.029 &2.015 &3.019  & \\
  &(0.023) &(0.035) &(0.047) & 22  \\   \hline
\end{tabular}
\end{center}
\end{table}

\begin{table}[h]
\caption{initial Bayes type estimator of $\beta$ with reduced data 
($\underline{k}_{\eta_2, \tau_2, n}=10^4$,  $\underline{T}_{\eta_2, n} =20$)}
\begin{tabular}{c|cccccc||c} \hline
		&$\hat{\beta}^{(0)}_{1}(1)$&$\hat{\beta}^{(0)}_{2}(2)$&$\hat{\beta}^{(0)}_{3}(2)$&$\hat{\beta}^{(0)}_{4}(3)$&
		$\hat{\beta}^{(0)}_{5}(3)$&$\hat{\beta}^{(0)}_{6}(4)$ &time(min.)
\\ \hline
 & 1.077 &1.990  &2.169 &2.933 &3.111 &3.989 &  \\
 &(0.197) &(0.034) &(0.453) &(0.130)  &(0.523) &(0.108)  &46  \\   \hline
\end{tabular}
\end{table}


Since we set that $\eta_1=\eta_2= 61/70$, {$\gamma=\gamma'=7/10$}, $\tau_{1}=\tau_{3}=2.0$, $q_{1}=q_2=1/2$, 
one has that 
\begin{eqnarray*}
J_{1} &=& \lfloor -\log_{2}\left( q_{1}\left(\eta_{1}-\gamma/\tau_{1}\right)/\left(1-\gamma'/\tau_{3}\right)\right)\rfloor= 1,
\\
J_{2} &=& \lfloor -\log_{2} \left( q_{2}\left(\eta_{2}-\gamma\right)/\left(1-\gamma' \right) \right)\rfloor=1.
\end{eqnarray*}
Tables 8 and 9  show the simulation results of the hybrid multi-step estimators
$(\hat{\alpha}_{J_1,n}, \hat{\beta}_{J_2,n})$ with
the initial estimator $(\tilde{\alpha}_{q_{1},\tau_{1},n}^{\left(0\right)}, \tilde{\beta}_{q_{2},\tau_{2},n}^{\left(0\right)})$
in Tables 6 and 7.

\begin{table}[h]
\begin{center}
\caption{hybrid multi-step estimator of $\alpha$ with the initial Bayes type estimator 
 ($\underline{k}_{\eta_2, \tau_2, n}=10^4$)}
\begin{tabular}{c|ccc||c} \hline
		&$\hat{\alpha}_{1}(1)$&$\hat{\alpha}_{2}(2)$&$\hat{\alpha}_{3}(3)$ & time(sec.) \\ \hline
 &1.048 &2.057 &3.053   & \\
 &(0.005) &(0.010) &(0.014)  &13  \\   \hline
\end{tabular}
\end{center}
\end{table}

\begin{table}[h]
\caption{hybrid multi-step estimator of $\beta$ with the initial Bayes type estimator
 ($\underline{k}_{\eta_2, \tau_2, n}=10^4$,  $\underline{T}_{\eta_2, n} =20$))}
\begin{tabular}{c|cccccc||c} \hline
		&$\hat{\beta}_{1}(1)$&$\hat{\beta}_{2}(2)$&$\hat{\beta}_{3}(2)$&$\hat{\beta}_{4}(3)$&
		$\hat{\beta}_{5}(3)$&$\hat{\beta}_{6}(4)$ &time(sec.)
\\ \hline
 &1.021 &1.988 &2.044 &2.953 &3.048 &3.983 &   \\
&(0.046) &(0.010) &(0.099) &(0.038) &(0.162) &(0.028)& 70 \\   \hline
\end{tabular}
\end{table}


\noindent
From Tables 8 and 9, we can see that the hybrid multi-step estimators with the initial Bayes estimators 
improve the initial Bayes estimators in Tables 6 and 7.
It is worth mentioning that the performance  of the hybrid multi-step estimator 
with the initial Bayes estimator is almost the same as that of the estimator in Tables 2 and 3.

\section{Concluding remarks}
In this paper, we have provided the hybrid estimation for
noisily ergodic diffusion processes based on ultra high frequency data
from the viewpoint of computational cost.
In order to get the adaptive ML type estimators,
we need optimisation of the quasi likelihood function and
selecting a suitable initial value is important,
but it comes difficult when the dimension of the parameter space is large. 
While the computation of the Bayes type estimator is generally
free from  a choice of the initial value, 
there is a serious problem that it takes so much time to compute
the Bayes type estimator when the sample size is large. 
 \citet{Kamatani-Uchida-2015} 
proposed the multi-step ML type estimator based on the initial Bayes type estimator with the full data, 
and  \citet{Kaino-et-al-2017} and  \citet{Kaino-Uchida-2018a, Kaino-Uchida-2018b} studied 
the adaptive ML type estimator with the initial Bayes type estimator
derived from the reduced data by applying the result of 
 \citet{Kutoyants-2017} to high frequency data analysis.
In this paper, we have proposed the initial Bayes type estimator with the reduced data 
based on the local means obtained from the high frequency data with noise 
and hybrid multi-step estimator with the initial Bayes type estimator.
The proposed hybrid multi-step estimators have asymptotic normality and convergence of moments
and we see from the numerical examples in Section 4 that 
they have good performance.

\section{Proof}
\subsection{Some lemmas with respect to local means} We note some useful lemmas for moment evaluations.
Most of them are proved in \citet{Nakakita-Uchida-2018b} and \citet{Nakakita-Uchida-2018c}.
We denote $\tau=\tau_{i}$ for $i=1,2,3$, and define the following random variables:
\begin{align*}
\zeta_{\tau,j+1,n}:=
\frac{1}{p_{\tau,n}}\sum_{i=0}^{p_{\tau,n}-1}\int_{j\Delta_{\tau,n}+ih_{n}}^{\left(j+1\right)\Delta_{\tau,n}}\mathrm{d}w_{s},\quad
\zeta_{\tau,j+2,n}':=
\frac{1}{p_{\tau,n}}\sum_{i=0}^{p_{\tau,n}-1}\int_{\left(j+1\right)\Delta_{\tau,n}}^{\left(j+1\right)\Delta_{\tau,n}+ih_{n}}\mathrm{d}w_{s}.
\end{align*}

The next lemma is Lemma 11 in \cite{Nakakita-Uchida-2018b}.

\begin{lemma}\label{EvalZeta}
	$\zeta_{\tau,j+1,n}$ and $\zeta_{\tau,j+1,n}'$ are $\mathcal{G}_{j+1,n}^{\tau}$-measurable, independent of $\mathcal{G}_{j,n}^{\tau}$ and Gaussian.These variables have the next decomposition:
	\begin{align*}
	\zeta_{\tau,j+1,n}&=\frac{1}{p_{\tau,n}}\sum_{k=0}^{p_{\tau,n}-1}\left(k+1\right)\int_{j\Delta_{\tau,n}+kh_{n}}^{j\Delta_{\tau,n}+\left(k+1\right)h_{n}}\mathrm{d}w_{s},\\
	\zeta_{\tau,j+1,n}'&=\frac{1}{p_{\tau,n}}\sum_{k=0}^{p_{\tau,n}-1}\left(p_{\tau,n}-k-1\right)
	\int_{j\Delta_{\tau,n}+kh_{n}}^{j\Delta_{\tau,n}+\left(k+1\right)h_{n}}\mathrm{d}w_{s}.
	\end{align*}
	The evaluation of the following conditional expectations holds:
	\begin{align*}
	\mathbf{E}\left[\zeta_{\tau,j,n}|\mathcal{G}_{j,n}^{\tau}\right]=\mathbf{E}\left[\zeta_{\tau,j+1,n}'|\mathcal{G}_{j,n}^{\tau}\right]&=\mathbf{0},\\
	\mathbf{E}\left[\zeta_{\tau,j+1,n}\left(\zeta_{\tau,j+1,n}\right)^{T}|\mathcal{G}_{j,n}^{\tau}\right]&=m_{\tau,n}\Delta_{\tau,n}I_{r}, \\
	\mathbf{E}\left[\zeta_{\tau,j+1,n}'\left(\zeta_{\tau,j+1,n}'\right)^{T}|\mathcal{G}_{j,n}^{\tau}\right]&=m_{\tau,n}'\Delta_{\tau,n}I_{r}, \\
	\mathbf{E}\left[\zeta_{\tau,j+1,n}\left(\zeta_{\tau,j+1,n}'\right)^{T}|\mathcal{G}_{j,n}^{\tau}\right]&=\chi_{\tau,n}\Delta_{\tau,n}I_{r}, 
    \end{align*}
	where $m_{\tau,n}=\left(\frac{1}{3}+\frac{1}{2p_{\tau,n}}+\frac{1}{6p_{\tau,n}^2}\right)$, $m_{\tau,n}'=\left(\frac{1}{3}-\frac{1}{2p_{\tau,n}}+\frac{1}{6p_{\tau,n}^2}\right)$, and $\chi_{\tau,n}=\frac{1}{6}\left(1-\frac{1}{p_{\tau,n}^2}\right)$.
\end{lemma}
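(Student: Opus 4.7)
The plan is to establish each claim by direct computation using Fubini-type rearrangement of the defining double sums followed by standard properties of Wiener increments. This is essentially a restatement of Lemma 11 in \cite{Nakakita-Uchida-2018b}, so the argument is self-contained and computational; no deep probabilistic machinery is needed.

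First I would verify measurability, independence, and the Gaussian property. Each of $\zeta_{\tau,j+1,n}$ and $\zeta_{\tau,j+1,n}'$ is, by inspection, a finite linear combination of Wiener increments over subintervals of $[j\Delta_{\tau,n},(j+1)\Delta_{\tau,n}]$, hence $\mathcal{G}_{j+1,n}^{\tau}$-measurable and, by independence of the increments of $w$, independent of $\mathcal{G}_{j,n}^{\tau}=\mathcal{G}_{j\Delta_{\tau,n}}$. Since the mapping is linear in $w$, both variables are Gaussian.

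Next I would derive the decomposition. Setting $\xi_k:=\int_{j\Delta_{\tau,n}+kh_n}^{j\Delta_{\tau,n}+(k+1)h_n}\mathrm{d}w_s$, which are i.i.d.\ $N(0,h_n I_r)$ conditional on $\mathcal{G}_{j,n}^{\tau}$, I would write each inner integral as
\begin{align*}
\int_{j\Delta_{\tau,n}+ih_n}^{(j+1)\Delta_{\tau,n}}\mathrm{d}w_s=\sum_{k=i}^{p_{\tau,n}-1}\xi_k
\end{align*}
and swap the order of summation to obtain $\zeta_{\tau,j+1,n}=\frac{1}{p_{\tau,n}}\sum_{k=0}^{p_{\tau,n}-1}(k+1)\xi_k$. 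The identity for $\zeta_{\tau,j+1,n}'$ follows from the analogous swap, yielding weights $(p_{\tau,n}-k-1)$; the only care needed is bookkeeping the index offset built into the definition of $\zeta'$.

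Finally, the conditional moments follow from $\mathbf{E}[\xi_k\xi_\ell^T\mid\mathcal{G}_{j,n}^{\tau}]=h_n\delta_{k\ell}I_r$. For example,
\begin{align*}
\mathbf{E}\bigl[\zeta_{\tau,j+1,n}\zeta_{\tau,j+1,n}^T\mid\mathcal{G}_{j,n}^{\tau}\bigr]
=\frac{h_n}{p_{\tau,n}^{2}}\sum_{k=0}^{p_{\tau,n}-1}(k+1)^2\,I_r
=\frac{h_n}{p_{\tau,n}^{2}}\cdot\frac{p_{\tau,n}(p_{\tau,n}+1)(2p_{\tau,n}+1)}{6}\,I_r,
\end{align*}
which equals $m_{\tau,n}\Delta_{\tau,n}I_r$ after using $\Delta_{\tau,n}=p_{\tau,n}h_n$ and $(p_{\tau,n}+1)(2p_{\tau,n}+1)/(6p_{\tau,n})=\tfrac{1}{3}+\tfrac{1}{2p_{\tau,n}}+\tfrac{1}{6p_{\tau,n}^{2}}$. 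The identity for $\zeta'$ is analogous with $(p_{\tau,n}-k-1)^2$ replacing $(k+1)^2$ (giving the sign flip in the middle term of $m_{\tau,n}'$), and the cross term reduces to $\sum_{k=0}^{p_{\tau,n}-1}(k+1)(p_{\tau,n}-k-1)$, whose closed form matches $\chi_{\tau,n}\Delta_{\tau,n}$. There is no genuinely hard step; the only subtlety is the index bookkeeping for $\zeta'$ and matching the resulting power sums to the stated coefficients $m_{\tau,n}$, $m_{\tau,n}'$, $\chi_{\tau,n}$.
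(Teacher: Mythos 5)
Your proof is correct and complete: the index swap giving the weights $(k+1)$ and $(p_{\tau,n}-k-1)$, and the power-sum identities $\sum_{k=0}^{p-1}(k+1)^{2}=\tfrac{p(p+1)(2p+1)}{6}$, $\sum_{k=0}^{p-1}(p-k-1)^{2}=\tfrac{(p-1)p(2p-1)}{6}$ and $\sum_{k=0}^{p-1}(k+1)(p-k-1)=\tfrac{p(p^{2}-1)}{6}$ all reproduce the stated $m_{\tau,n}$, $m_{\tau,n}'$ and $\chi_{\tau,n}$ exactly. The paper itself gives no proof, simply citing Lemma 11 of Nakakita and Uchida (2018b), and your direct computation is precisely the standard argument used there, so nothing further is needed.
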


The next lemma is from \citet{Nakakita-Uchida-2018c}.

\begin{lemma}\label{CED}
Assume [A1]-[A2] and [A5]-[A6].
	Moreover, assume the components of the functions $f,g\in C^{2}\left(\mathbf{R}^{d};\ \mathbf{R}\right)$, $\partial_{x}f$, $\partial_{x}g$, $\partial_{x}^{2}f$ $\partial_{x}^{2}g$ are polynomial growth functions. Then we have
	\begin{align*}
	\left|\mathbf{E}\left[f\left(\bar{Y}_{\tau,j}\right)g\left(X_{\left(j+1\right)\Delta_{\tau,n}}\right)-f\left(X_{j\Delta_{\tau,n}}\right)g\left(X_{j\Delta_{\tau,n}}\right)|\mathcal{H}_{j,n}^{\tau}\right]\right|\le C\Delta_{\tau,n}\left(1+\left|X_{j\Delta_{\tau,n}}\right|\right)^{C}.
	\end{align*}
\end{lemma}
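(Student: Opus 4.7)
\textbf{Proof proposal for Lemma \ref{CED}.} The plan is to decompose the increment
\[
f(\bar{Y}_{\tau,j})g(X_{(j+1)\Delta_{\tau,n}}) - f(X_{j\Delta_{\tau,n}})g(X_{j\Delta_{\tau,n}})
= \bigl(f(\bar{Y}_{\tau,j}) - f(X_{j\Delta_{\tau,n}})\bigr)g(X_{(j+1)\Delta_{\tau,n}})
+ f(X_{j\Delta_{\tau,n}})\bigl(g(X_{(j+1)\Delta_{\tau,n}}) - g(X_{j\Delta_{\tau,n}})\bigr)
\]
and bound each summand by $C\Delta_{\tau,n}(1+|X_{j\Delta_{\tau,n}}|)^{C}$ under $\mathbf{E}[\,\cdot\mid\mathcal{H}_{j,n}^{\tau}]$. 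For the second summand, I apply It\^o's formula to $g$ between $j\Delta_{\tau,n}$ and $(j+1)\Delta_{\tau,n}$; the stochastic integral has vanishing $\mathcal{H}_{j,n}^{\tau}$-conditional mean, and the drift integral is $O(\Delta_{\tau,n})$ with the required polynomial constant, by the polynomial-growth assumptions on $a,b,\partial_{x}g,\partial_{x}^{2}g$ and the global moment bound in [A1](iii), multiplied by the $\mathcal{H}_{j,n}^{\tau}$-measurable factor $f(X_{j\Delta_{\tau,n}})$.

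The first summand is the substantive one. I Taylor-expand to second order,
\[
f(\bar{Y}_{\tau,j}) - f(X_{j\Delta_{\tau,n}})
= \partial_{x} f(X_{j\Delta_{\tau,n}})(\bar{Y}_{\tau,j} - X_{j\Delta_{\tau,n}}) + R_{j},
\]
with $R_{j}$ quadratic in $\bar{Y}_{\tau,j}-X_{j\Delta_{\tau,n}}$ and involving $\partial_{x}^{2}f$ evaluated on the segment. Splitting $\bar{Y}_{\tau,j}-X_{j\Delta_{\tau,n}}=(\bar{X}_{\tau,j}-X_{j\Delta_{\tau,n}})+\Lambda^{1/2}\bar{\varepsilon}_{\tau,j}$, using Burkholder--Davis--Gundy for the diffusion contribution and the noise variance $\mathbf{E}[|\bar{\varepsilon}_{\tau,j}|^{2}]=d/p_{\tau,n}=dh_{n}^{1/\tau}\le d\Delta_{\tau,n}$ (valid since $\tau\in(1,2]$ forces $h_{n}^{1/\tau}\le h_{n}^{1-1/\tau}$ for $h_{n}$ small), I obtain
\[
\mathbf{E}\bigl[|\bar{Y}_{\tau,j} - X_{j\Delta_{\tau,n}}|^{2}\bigm|\mathcal{H}_{j,n}^{\tau}\bigr]\le C\Delta_{\tau,n}(1+|X_{j\Delta_{\tau,n}}|)^{C}.
\]
Combined with polynomial bounds on $\partial_{x}^{2}f$ and $g$ this, via Cauchy--Schwarz, yields $\bigl|\mathbf{E}[R_{j}\,g(X_{(j+1)\Delta_{\tau,n}})\mid\mathcal{H}_{j,n}^{\tau}]\bigr|\le C\Delta_{\tau,n}(1+|X_{j\Delta_{\tau,n}}|)^{C}$.

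For the first-order term, I use that $\bar{\varepsilon}_{\tau,j}$ is independent of $\mathcal{H}_{j,n}^{\tau}$ and of $\{w_{t}\}$ (hence of $g(X_{(j+1)\Delta_{\tau,n}})$): the noise contribution factors as
\[
\partial_{x}f(X_{j\Delta_{\tau,n}})\Lambda^{1/2}\mathbf{E}[\bar{\varepsilon}_{\tau,j}\mid\mathcal{H}_{j,n}^{\tau}]\cdot\mathbf{E}[g(X_{(j+1)\Delta_{\tau,n}})\mid\mathcal{H}_{j,n}^{\tau}]=0
\]
by [A5]. For the drift contribution, I write $g(X_{(j+1)\Delta_{\tau,n}}) = g(X_{j\Delta_{\tau,n}}) + (g(X_{(j+1)\Delta_{\tau,n}})-g(X_{j\Delta_{\tau,n}}))$. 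In the first piece, $g(X_{j\Delta_{\tau,n}})$ is $\mathcal{H}_{j,n}^{\tau}$-measurable and factors out; the remaining $\mathbf{E}[\bar{X}_{\tau,j}-X_{j\Delta_{\tau,n}}\mid\mathcal{H}_{j,n}^{\tau}]$ equals $p_{\tau,n}^{-1}\sum_{i=0}^{p_{\tau,n}-1}\mathbf{E}\bigl[\int_{j\Delta_{\tau,n}}^{j\Delta_{\tau,n}+ih_{n}}b(X_{s},\beta^{\star})\,ds\bigm|\mathcal{H}_{j,n}^{\tau}\bigr]$ (the Wiener integrals having zero conditional mean), which is $O(\Delta_{\tau,n})(1+|X_{j\Delta_{\tau,n}}|)^{C}$. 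The second piece is handled by Cauchy--Schwarz: each of $\|\bar{X}_{\tau,j}-X_{j\Delta_{\tau,n}}\|_{L^{2}(\mathcal{H}_{j,n}^{\tau})}$ and $\|g(X_{(j+1)\Delta_{\tau,n}})-g(X_{j\Delta_{\tau,n}})\|_{L^{2}(\mathcal{H}_{j,n}^{\tau})}$ is $O(\Delta_{\tau,n}^{1/2})(1+|X_{j\Delta_{\tau,n}}|)^{C}$, yielding the required $\Delta_{\tau,n}$-bound on their product.

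The main technical hurdle is the careful accounting of all conditional moments with the correct polynomial-in-$|X_{j\Delta_{\tau,n}}|$ constants uniform in $j$ and $n$, together with verifying $h_{n}^{1/\tau}\le\Delta_{\tau,n}$ on the full range $\tau\in(1,2]$ so that the noise scale is subordinate to the block scale. These steps are by now standard under [A1]--[A2] and [A5]--[A6], the detailed moment machinery being developed in \citet{Nakakita-Uchida-2018b,Nakakita-Uchida-2018c}; assembling the four bounds above completes the proof.
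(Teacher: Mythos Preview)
Your argument is sound and follows the natural route: split into the $g$-increment handled by It\^o's formula, and the $f$-increment handled by second-order Taylor expansion around $X_{j\Delta_{\tau,n}}$, with the linear noise term killed by independence and $\mathbf{E}[\bar{\varepsilon}_{\tau,j}]=0$, the linear diffusion term by the drift representation of $\mathbf{E}[\bar{X}_{\tau,j}-X_{j\Delta_{\tau,n}}\mid\mathcal{H}_{j,n}^{\tau}]$, and all cross and quadratic terms by Cauchy--Schwarz together with $\mathbf{E}[|\bar{Y}_{\tau,j}-X_{j\Delta_{\tau,n}}|^{p}\mid\mathcal{H}_{j,n}^{\tau}]\le C\Delta_{\tau,n}^{p/2}(1+|X_{j\Delta_{\tau,n}}|)^{C}$. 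Two small refinements: first, when you bound the drift integral in the It\^o step and the conditional moments of $X_{s}-X_{j\Delta_{\tau,n}}$, the relevant input is not the unconditional bound [A1](iii) but rather the conditional estimate $\mathbf{E}[|X_{s}|^{p}\mid\mathcal{G}_{j\Delta_{\tau,n}}]\le C(1+|X_{j\Delta_{\tau,n}}|)^{p}$ for $s\ge j\Delta_{\tau,n}$, which follows from the Lipschitz condition [A1](ii) by the standard Gronwall argument; second, in the quadratic remainder $R_{j}$ the Hessian $\partial_{x}^{2}f$ is evaluated on the segment joining $X_{j\Delta_{\tau,n}}$ and $\bar{Y}_{\tau,j}$, so its polynomial bound picks up $|\bar{Y}_{\tau,j}-X_{j\Delta_{\tau,n}}|$ as well---this is absorbed by taking a slightly higher conditional moment of $\bar{Y}_{\tau,j}-X_{j\Delta_{\tau,n}}$ before applying Cauchy--Schwarz, which [A5] permits.

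As for the comparison: the present paper does not supply its own proof of this lemma but simply records it as a result from \citet{Nakakita-Uchida-2018c}. Your sketch is precisely the kind of argument that reference carries out, so there is no discrepancy to report.
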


The next lemma is from \citet{Nakakita-Uchida-2018b} and \citet{Nakakita-Uchida-2018c}.
\begin{lemma}\label{localMeanExpansion}
Assume [A1]-[A2] and [A5]-[A6].
\begin{enumerate}
\item The next expansion holds:
\begin{align*}
	\bar{Y}_{\tau,j+1}-\bar{Y}_{\tau,j}&=\Delta_{\tau,n}b\left(X_{j\Delta_{\tau,n}}\right)+a\left(X_{j\Delta_{\tau,n}}\right)\left(\zeta_{\tau,j+1,n}+\zeta_{\tau,j+2,n}'\right)\\
	&\qquad+e_{\tau,j,n}+\left(\Lambda_{\star}\right)^{1/2}\left(\bar{\varepsilon}_{\tau,j+1}-\bar{\varepsilon}_{\tau,j}\right),
\end{align*}
where $e_{\tau,j,n}$ is a $\mathcal{H}_{j+2,n}^{\tau}$-measurable random variable such that $\left|\mathbf{E}\left[e_{\tau,j,n}|\mathcal{H}_{j,n}^{\tau}\right]\right|\le C\Delta_{\tau,n}^{2}\left(1+\left|X_{j\Delta_{\tau,n}}\right|^{C}\right)$ and $\left\|e_{\tau,j,n}\right\|_{p}\le C\left(p\right)\Delta_{\tau,n}$ for $j=1,\ldots,k_{n}-2$, $n\in\mathbf{N}$ and $p\ge 1$.
\item For any $p\ge 1$ and $\mathcal{H}_{j,n}^{\tau}$-measurable $\mathbf{R}^{d}\otimes\mathbf{R}^{r}$-valued random variable $\mathbb{B}_{j,n}\in \bigcap_{p>0}L^{p}\left(P_{\theta^{\star}}\right)$, we have the next $L^{p}$-boundedness
\begin{align*}
\mathbf{E}\left[\left|\sum_{j=1}^{k_{\tau,n}-2}\mathbb{B}_{j,n}\left[e_{\tau,j,n}\left(\zeta_{\tau,j+1,n}+\zeta_{\tau,j+2,n}'\right)^{T}\right]\right|^{p}\right]^{1/p}\le C\left(p\right)k_{n}\Delta_{\tau,n}^{2}.
\end{align*}
\item For any $p\ge 1$ and $\mathcal{H}_{j,n}^{\tau}$-measurable $\mathbf{R}^{d}\otimes\mathbf{R}^{d}$-valued random variable $\mathbb{C}_{j,n}\in \bigcap_{p>0}L^{p}\left(P_{\theta^{\star}}\right)$, we have the next $L^{p}$-boundedness
\begin{align*}
\mathbf{E}\left[\left|\sum_{j=1}^{k_{\tau,n}-2}\mathbb{C}_{j,n}\left[e_{\tau,j,n}\right]\right|^{p}\right]^{1/p}\le C\left(p\right)k_{n}\Delta_{\tau,n}^{3/2}.
\end{align*}
\end{enumerate}
\end{lemma}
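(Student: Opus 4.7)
The plan is to establish the four assertions in order, relying on Itô's formula for the diffusion part and on martingale moment inequalities together with the filtration structure. For Claim (1), begin by writing $\bar{Y}_{\tau,j+1}-\bar{Y}_{\tau,j} = (\bar{X}_{\tau,j+1}-\bar{X}_{\tau,j}) + \Lambda_{\star}^{1/2}(\bar{\varepsilon}_{\tau,j+1}-\bar{\varepsilon}_{\tau,j})$ directly from $Y_{ih_{n}} = X_{ih_{n}}+\Lambda^{1/2}\varepsilon_{ih_{n}}$. For the diffusion part, apply the SDE on each subinterval $[j\Delta_{\tau,n}+ih_{n},(j+1)\Delta_{\tau,n}+ih_{n}]$, substitute $b(X_{s}) = b(X_{j\Delta_{\tau,n}})+(b(X_{s})-b(X_{j\Delta_{\tau,n}}))$ and likewise for $a$, average over $i=0,\dots,p_{\tau,n}-1$, and identify the leading Wiener contribution as $a(X_{j\Delta_{\tau,n}})(\zeta_{\tau,j+1,n}+\zeta_{\tau,j+2,n}')$ through the identity $\frac{1}{p_{\tau,n}}\sum_{i}\int_{j\Delta_{\tau,n}+ih_{n}}^{(j+1)\Delta_{\tau,n}+ih_{n}}\mathrm{d}w_{s} = \zeta_{\tau,j+1,n}+\zeta_{\tau,j+2,n}'$ from Lemma \ref{EvalZeta}. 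The remaining differences of $b$ and $a$ are collected into $e_{\tau,j,n}$, which is manifestly $\mathcal{H}_{j+2,n}^{\tau}$-measurable by construction.

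To bound the conditional mean and $L^{p}$-norm of $e_{\tau,j,n}$, apply Itô's formula under [A2] to write
\begin{align*}
b(X_{s})-b(X_{j\Delta_{\tau,n}}) &= \int_{j\Delta_{\tau,n}}^{s}(L_{\theta^{\star}}b)(X_{u})\,\mathrm{d}u + \int_{j\Delta_{\tau,n}}^{s}(\partial_{x}b)(X_{u})a(X_{u})\,\mathrm{d}w_{u},
\end{align*}
and analogously for $a(X_{s})-a(X_{j\Delta_{\tau,n}})$. The stochastic integrals vanish in conditional expectation given $\mathcal{G}_{j,n}^{\tau}\subset\mathcal{H}_{j,n}^{\tau}$, while the Lebesgue integrals, after a second time integration over a length-$\Delta_{\tau,n}$ interval and averaging over $i$, yield $O(\Delta_{\tau,n}^{2})$ with polynomial growth in $|X_{j\Delta_{\tau,n}}|$ guaranteed by [A1]-(iii) and [A2]. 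The $L^{p}$ bound $\|e_{\tau,j,n}\|_{p}\le C(p)\Delta_{\tau,n}$ follows from Burkholder--Davis--Gundy applied to the Wiener integrals together with Jensen/Hölder on the drift terms and the moment bound on $X$ from [A1]-(iii).

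For Claims (2) and (3), split $e_{\tau,j,n}=m_{j,n}+r_{j,n}$ with $r_{j,n}:=\mathbf{E}[e_{\tau,j,n}|\mathcal{H}_{j,n}^{\tau}]$ and $m_{j,n}:=e_{\tau,j,n}-r_{j,n}$. Since $\|r_{j,n}\|_{p}\le C(p)\Delta_{\tau,n}^{2}$, the triangle inequality bounds its contribution to both sums by $k_{n}\Delta_{\tau,n}^{2}$. The martingale part $m_{j,n}$ is $\mathcal{H}_{j+2,n}^{\tau}$-measurable but not $\mathcal{H}_{j+1,n}^{\tau}$-measurable, so a direct application of BDG is unavailable; to remedy this, split $\sum_{j}$ into sums over even and over odd indices, within which consecutive summands live on disjoint Wiener increments and therefore form martingale differences with respect to the coarsened filtrations $\{\mathcal{H}_{2k,n}^{\tau}\}_{k}$ and $\{\mathcal{H}_{2k+1,n}^{\tau}\}_{k}$. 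BDG then produces $\sqrt{k_{n}}\Delta_{\tau,n}$ for Claim (3) and $\sqrt{k_{n}}\Delta_{\tau,n}^{3/2}$ for Claim (2), the extra $\sqrt{\Delta_{\tau,n}}$ arising from $\|\zeta_{\tau,j+1,n}+\zeta_{\tau,j+2,n}'\|_{2p}\le C\sqrt{\Delta_{\tau,n}}$ via Lemma \ref{EvalZeta}. Since $k_{n}\Delta_{\tau,n}=T_{n}\to\infty$ by [A6], we have $\sqrt{k_{n}}\Delta_{\tau,n}\le k_{n}\Delta_{\tau,n}^{3/2}$ and $\sqrt{k_{n}}\Delta_{\tau,n}^{3/2}\le k_{n}\Delta_{\tau,n}^{2}$, which absorb the martingale parts into the desired rates.

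The main technical hurdle is the sharp $k_{n}\Delta_{\tau,n}^{2}$ rate in Claim (2), strictly better than the crude $k_{n}\Delta_{\tau,n}^{3/2}$ given by term-by-term triangle inequality. Attaining it requires combining the even/odd martingale reduction with an explicit computation of $\mathbf{E}[e_{\tau,j,n}(\zeta_{\tau,j+1,n}+\zeta_{\tau,j+2,n}')^{T}|\mathcal{H}_{j,n}^{\tau}]$ through the Itô expansion of $e_{\tau,j,n}$ and the conditional moment identities for $\zeta_{\tau,j+1,n},\zeta_{\tau,j+2,n}'$ in Lemma \ref{EvalZeta}; these identities cancel the leading cross-covariance and reduce the conditional mean to $O(\Delta_{\tau,n}^{2})$, which then propagates to $k_{n}\Delta_{\tau,n}^{2}$ upon summation.
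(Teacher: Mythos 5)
The paper does not actually prove Lemma~\ref{localMeanExpansion}: it is imported verbatim from \citet{Nakakita-Uchida-2018b} and \citet{Nakakita-Uchida-2018c}, so there is no in-paper argument to compare against. Judged on its own, your reconstruction is sound and follows what is essentially the standard route in those references: the algebraic identity $\frac{1}{p_{\tau,n}}\sum_{i}\int_{j\Delta_{\tau,n}+ih_{n}}^{(j+1)\Delta_{\tau,n}+ih_{n}}\mathrm{d}w_{s}=\zeta_{\tau,j+1,n}+\zeta_{\tau,j+2,n}'$ is correct, the remainder $e_{\tau,j,n}=\frac{1}{p_{\tau,n}}\sum_{i}\int\left(b(X_{s})-b(X_{j\Delta_{\tau,n}})\right)\mathrm{d}s+\frac{1}{p_{\tau,n}}\sum_{i}\int\left(a(X_{s})-a(X_{j\Delta_{\tau,n}})\right)\mathrm{d}w_{s}$ has the stated measurability and moment bounds by It\^o's formula, [A1]-(iii) and BDG, and the centring-plus-residue-class splitting (the paper uses residues mod $3$ elsewhere; mod $2$ suffices here since the summands are $\mathcal{H}_{j+2,n}^{\tau}$-measurable and $\mathbb{B}_{j,n}$ is $\mathcal{H}_{j,n}^{\tau}$-measurable) combined with $k_{\tau,n}\Delta_{\tau,n}=T_{n}\to\infty$ delivers the rates in (2) and (3). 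One point of imprecision worth flagging: in Claim (2) the $O(\Delta_{\tau,n}^{2})$ bound on $\mathbf{E}\left[e_{\tau,j,n}\left(\zeta_{\tau,j+1,n}+\zeta_{\tau,j+2,n}'\right)^{T}\big|\mathcal{H}_{j,n}^{\tau}\right]$ does not come from the second-moment identities of Lemma~\ref{EvalZeta} ``cancelling the leading cross-covariance''; it comes from the conditional It\^o isometry applied to the martingale part of $e_{\tau,j,n}$, which reduces the cross term to an integral of $\mathbf{E}\left[a(X_{s})-a(X_{j\Delta_{\tau,n}})\big|\mathcal{G}_{j,n}^{\tau}\right]=O\left(\Delta_{\tau,n}\right)\left(1+\left|X_{j\Delta_{\tau,n}}\right|\right)^{C}$ over an overlap window of length $O\left(\Delta_{\tau,n}\right)$, while the drift part of $e_{\tau,j,n}$ is already $O\left(\Delta_{\tau,n}^{3/2}\right)$ in $L^{2}$ and is handled by Cauchy--Schwarz. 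With that mechanism stated correctly, the argument closes.
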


We define for any $\tau\in\left(1,2\right]$ and $j=0,\ldots,k_{\tau,n}-2$,
\begin{align*}
	&\widehat{A}_{\tau,j,n}\\
	&:=\Delta_{\tau,n}^{-1}\left[\left(m_{\tau,n}+m_{\tau,n}'\right)^{-\frac{1}{2}}a\left(X_{j\Delta_{\tau,n}}\right)
	\left(\zeta_{\tau,j+1,n}+\zeta_{\tau,j+2,n}'\right)+\sqrt{\frac{3}{2}}\left(\Lambda_{\star}\right)^{\frac{1}{2}}\left(\bar{\varepsilon}_{\tau,j+1}-\bar{\varepsilon}_{\tau,j}\right)\right]^{\otimes2}.
\end{align*}

\begin{lemma}\label{approxQuadratic}
Assume [A1]-[A2] and [A5]-[A6].
	Moreover, assume $M:\mathbf{R}^{d}\times \Xi \to \mathbf{R}^{d}\otimes\mathbf{R}^{d}$ is a polynomial growth function uniformly in $\vartheta$. Then, for $\underline{k}\le k_{\tau,n}$,
	\begin{align*}
		\left\|\sum_{j=1}^{\underline{k}-2}
		M\left(
				\bar{Y}_{\tau,j-1},
				\alpha
		\right)
		\left[
			\Delta_{\tau,n}^{-1}\left(\bar{Y}_{\tau,j+1}-\bar{Y}_{\tau,j}\right)^{\otimes2}
			-\frac{2}{3}\widehat{A}_{\tau,j,n}
		\right]\right\|_{p}\le C\left(p\right)\underline{k}\Delta_{\tau,n}.
	\end{align*}
\end{lemma}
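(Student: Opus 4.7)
The plan is to substitute the expansion of $\bar Y_{\tau,j+1}-\bar Y_{\tau,j}$ provided by Lemma \ref{localMeanExpansion}(1) into the squared difference, recognise the rescaled stochastic-plus-noise piece that $\widehat A_{\tau,j,n}$ was constructed to match, and then bound the surviving residuals using Lemma \ref{localMeanExpansion}(2)-(3) together with a Burkholder-Davis-Gundy estimate for the cross term that is not covered by those two tools.

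Write $W_j:=a(X_{j\Delta_{\tau,n}})(\zeta_{\tau,j+1,n}+\zeta_{\tau,j+2,n}')$, $N_j:=\Lambda_\star^{1/2}(\bar\varepsilon_{\tau,j+1}-\bar\varepsilon_{\tau,j})$, $Z_j:=W_j+N_j$ and $b_j:=b(X_{j\Delta_{\tau,n}})$, so that Lemma \ref{localMeanExpansion}(1) gives $\bar Y_{\tau,j+1}-\bar Y_{\tau,j}=\Delta_{\tau,n}b_j+Z_j+e_{\tau,j,n}$ and therefore
\begin{align*}
(\bar Y_{\tau,j+1}-\bar Y_{\tau,j})^{\otimes 2}&=Z_j^{\otimes 2}+\Delta_{\tau,n}^{2}b_j^{\otimes 2}+e_{\tau,j,n}^{\otimes 2}\\
&\quad+2\Delta_{\tau,n}\mathrm{sym}(b_j,Z_j)+2\mathrm{sym}(Z_j,e_{\tau,j,n})+2\Delta_{\tau,n}\mathrm{sym}(b_j,e_{\tau,j,n}),
\end{align*}
with $\mathrm{sym}(u,v):=\tfrac12(uv^T+vu^T)$. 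Lemma \ref{EvalZeta} yields $m_{\tau,n}+m_{\tau,n}'=\tfrac23+O(p_{\tau,n}^{-2})$ and hence $(m_{\tau,n}+m_{\tau,n}')^{-1/2}=\sqrt{3/2}(1+O(p_{\tau,n}^{-2}))$; unfolding the definition of $\widehat A_{\tau,j,n}$ shows $\tfrac{2}{3}\Delta_{\tau,n}\widehat A_{\tau,j,n}=Z_j^{\otimes 2}+R_j$, where $R_j$ is a linear combination of $W_j^{\otimes 2}$ and $\mathrm{sym}(W_j,N_j)$ with scalar prefactors of order $p_{\tau,n}^{-2}$. Subtracting gives $\Delta_{\tau,n}^{-1}$ times each of the six terms above plus $-\Delta_{\tau,n}^{-1}R_j$.

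Multiplying by $M_{j-1}:=M(\bar Y_{\tau,j-1},\alpha)$ and summing, the pointwise-controlled contributions $\Delta_{\tau,n}b_j^{\otimes 2}$ and $\Delta_{\tau,n}^{-1}e_{\tau,j,n}^{\otimes 2}$ each produce $O(\underline k\Delta_{\tau,n})$ in $L^p$ using [A1](iii) and $\|e_{\tau,j,n}\|_p\le C\Delta_{\tau,n}$; the scale correction $\Delta_{\tau,n}^{-1}R_j$ contributes $O(\underline k p_{\tau,n}^{-2})=O(\underline k h_n^{2/\tau})\le O(\underline k\Delta_{\tau,n})$ because $h_n^{2/\tau}\le h_n^{1-1/\tau}=\Delta_{\tau,n}$ for $\tau\in(1,2]$. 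The cross terms $\Delta_{\tau,n}^{-1}\mathrm{sym}(W_j,e_{\tau,j,n})$ and $\mathrm{sym}(b_j,e_{\tau,j,n})$ are handled directly by Lemma \ref{localMeanExpansion}(2) and (3) respectively, after absorbing $M_{j-1}$ and $a(X_{j\Delta_{\tau,n}})$ or $b_j$ into $\mathbb B_{j,n}$ or $\mathbb C_{j,n}$; this produces $O(\underline k\Delta_{\tau,n})$ and $O(\underline k\Delta_{\tau,n}^{3/2})$, both dominated by $\underline k\Delta_{\tau,n}$. The noise-error component $\Delta_{\tau,n}^{-1}\mathrm{sym}(N_j,e_{\tau,j,n})$ is of pointwise size $p_{\tau,n}^{-1/2}$; using the mean-zero property $\mathbf E[N_j\mid\sigma(X)\vee\mathcal A_{j,n}^\tau]=0$ from [A5] and BDG on the resulting martingale sum gives $O(\sqrt{\underline k}\,p_{\tau,n}^{-1/2})\le O(\underline k\Delta_{\tau,n})$ under [A6].

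The main obstacle is the residual $2\sum_j M_{j-1}\mathrm{sym}(b_j,Z_j)$, whose naive pointwise estimate $O(\underline k(\sqrt{\Delta_{\tau,n}}+p_{\tau,n}^{-1/2}))$ is too large. Lemma \ref{EvalZeta} together with [A5] and the independence of the noise from the Wiener process yields $\mathbf{E}[Z_j\mid\mathcal H_{j,n}^\tau]=0$; however $Z_j$ is only $\mathcal H_{j+2,n}^\tau$-measurable since $\zeta_{\tau,j+2,n}'$ is supported on the same interval $[(j+1)\Delta_{\tau,n},(j+2)\Delta_{\tau,n}]$ as $\zeta_{\tau,j+2,n}$. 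We therefore split the sum according to the parity of $j$, so that each subsequence becomes a martingale-difference sequence with respect to the coarsened filtration $\{\mathcal H_{2\ell+2,n}^\tau\}_\ell$; Burkholder-Davis-Gundy then delivers an $L^p$ bound of order $\sqrt{\underline k}(\Delta_{\tau,n}^{1/2}+p_{\tau,n}^{-1/2})$. Both $\sqrt{\underline k\Delta_{\tau,n}}=\sqrt{\underline T_{\eta_i,n}}$ and $\sqrt{\underline k}\,p_{\tau,n}^{-1/2}$ are dominated by $\underline k\Delta_{\tau,n}$ under the divergence $\underline T_{\eta_i,n}\to\infty$ and [A6]. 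Collecting all the estimates gives the claimed bound $C(p)\underline k\Delta_{\tau,n}$.
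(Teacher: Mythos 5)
Your proof is correct and follows essentially the same route as the paper's: expand $\bar{Y}_{\tau,j+1}-\bar{Y}_{\tau,j}$ via Lemma~\ref{localMeanExpansion}(1), replace $\widehat{A}_{\tau,j,n}$ by its $3/2$-rescaled version with an $O\left(p_{\tau,n}^{-2}\right)$ Taylor error coming from $m_{\tau,n}+m_{\tau,n}'=2/3+O\left(p_{\tau,n}^{-2}\right)$, and dispose of the remaining terms by H\"{o}lder, Lemma~\ref{localMeanExpansion}(2)--(3), and a Burkholder martingale bound after splitting the sum into subsequences to restore adaptedness of the $\mathcal{H}_{j+2,n}^{\tau}$-measurable summands. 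The only cosmetic difference is that you split the index set modulo $2$ where the paper splits modulo $3$, and you group the cross terms slightly differently; both yield the same $C\left(p\right)\underline{k}\Delta_{\tau,n}$ bound under the implicit condition $\underline{k}\Delta_{\tau,n}\ge 1$ that both arguments use.
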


\begin{proof} We have
\begin{align*}
&\mathbf{E}\left[\left\|\Delta_{\tau,n}\widehat{A}_{\tau,j,n}-\frac{3}{2}\left[a\left(X_{j\Delta_{\tau,n}}\right)\left(\zeta_{\tau,j+1,n}+\zeta_{\tau,j+2,n}\right)+\left(\Lambda_{\star}\right)^{\frac{1}{2}}\left(\bar{\varepsilon}_{\tau,j+1}-\bar{\varepsilon}_{\tau,j}\right)\right]^{\otimes2}\right\|^{p}\right]^{1/p}\\
&\le \left|\frac{1}{m_{\tau,n}+m_{\tau,n}'}-\frac{3}{2}\right|
\mathbf{E}\left[\left\|
\left[a\left(X_{j\Delta_{\tau,n}}\right)\left(\zeta_{\tau,j+1,n}+\zeta_{\tau,j+2,n}'\right)\right]^{\otimes2}
\right\|^{p}\right]^{1/p}\\
&\quad+\sqrt{6}\left|\sqrt{\frac{1}{m_{\tau,n}+m_{\tau,n}'}}-\sqrt{\frac{3}{2}}\right|\\
&\qquad\times\mathbf{E}\left[\left\|
\left[
a\left(X_{j\Delta_{\tau,n}}\right)\left(\zeta_{\tau,j+1,n}+\zeta_{\tau,j+2,n}'\right)\left(\bar{\varepsilon}_{\tau,j+1}-\bar{\varepsilon}_{\tau,j}\right)^{T}\left(\Lambda_{\star}\right)^{1/2}
\right]
\right\|^{p}
\right]^{1/p}\\
&\le \left|\frac{1}{2/3+1/\left(3p_{\tau,n}^{2}\right)}-\frac{1}{2/3}\right|C\left(p\right)\Delta_{\tau,n}+\left|\sqrt{\frac{1}{2/3+1/\left(3p_{\tau,n}^{2}\right)}}-\sqrt{\frac{1}{2/3}}\right|\frac{C\left(p\right)\Delta_{\tau,n}^{1/2}}{p_{\tau,n}^{1/2}}\\
&\le \frac{C\left(p\right)\Delta_{\tau,n}}{p_{\tau,n}^{2}}
\end{align*}
with Taylor's expansion for $f_{1}(x)=1/x$ and $f_{2}(x)=\sqrt{1/x}$ around $x=2/3$. Using this evaluation, we obtain
\begin{align*}
	&\mathbf{E}\left[\left|\sum_{j=1}^{\underline{k}-2}
	M\left(
	\bar{Y}_{\tau,j-1},
	\alpha
	\right)
	\left[
	\Delta_{\tau,n}^{-1}\left(\bar{Y}_{\tau,j+1}-\bar{Y}_{\tau,j}\right)^{\otimes2}
	-\frac{2}{3}\widehat{A}_{\tau,j,n}
	\right]\right|^{p}\right]^{1/p}\\
	&\le \sum_{j=1}^{\underline{k}-2}\left\|M\left(
	\bar{Y}_{\tau,j-1},
	\alpha
	\right)\right\|_{2p}\\
	&\hspace{1.5cm}\times\left\|
	\widehat{A}_{\tau,j,n}-\frac{3}{2\Delta_{\tau,n}}\left[a\left(X_{j\Delta_{\tau,n}}\right)\left(\zeta_{\tau,j+1,n}+\zeta_{\tau,j+2,n}\right)+\left(\Lambda_{\star}\right)^{\frac{1}{2}}\left(\bar{\varepsilon}_{\tau,j+1}-\bar{\varepsilon}_{\tau,j}\right)\right]^{\otimes2}\right\|_{2p}\\
	&\quad+\mathbf{E}\left[\left|\sum_{j=1}^{\underline{k}-2}
	M\left(
	\bar{Y}_{\tau,j-1},
	\alpha
	\right)
	\left[
	\Delta_{\tau,n}^{-1}\left(\Delta_{\tau,n}b\left(X_{j\Delta_{\tau,n}}\right)+e_{\tau,j,n}\right)^{\otimes2}
	\right]\right|^{p}\right]^{1/p}\\
	&\quad+\mathbf{E}\left[\left|\sum_{j=1}^{\underline{k}-2}
	M\left(
	\bar{Y}_{\tau,j-1},
	\alpha
	\right)
	\left[
	\Delta_{\tau,n}^{-1}\left(\Delta_{\tau,n}b\left(X_{j\Delta_{\tau,n}}\right)\right)\left(\zeta_{\tau,j+1,n}+\zeta_{\tau,j+2,n}'\right)^{T}a\left(X_{j\Delta_{\tau,n}}\right)^{T}
	\right]\right|^{p}\right]^{1/p}\\
	&\quad+\mathbf{E}\left[\left|\sum_{j=1}^{\underline{k}-2}
	M\left(
	\bar{Y}_{\tau,j-1},
	\alpha
	\right)
	\left[
	\Delta_{\tau,n}^{-1}e_{\tau,j,n}\left(\zeta_{\tau,j+1,n}+\zeta_{\tau,j+2,n}'\right)^{T}a\left(X_{j\Delta_{\tau,n}}\right)^{T}
	\right]\right|^{p}\right]^{1/p}\\
	&\quad+\mathbf{E}\left[\left|\sum_{j=1}^{\underline{k}-2}
	M\left(
	\bar{Y}_{\tau,j-1},
	\alpha
	\right)
	\left[
	\Delta_{\tau,n}^{-1}\left(\Delta_{\tau,n}b\left(X_{j\Delta_{\tau,n}}\right)+e_{\tau,j,n}\right)\left(\bar{\varepsilon}_{\tau,j+1}-\bar{\varepsilon}_{\tau,j}\right)^{T}\left(\Lambda_{\star}\right)^{1/2}
	\right]\right|^{p}\right]^{1/p}.
\end{align*}
Because of the evaluation above, it holds
\begin{align*}
	&\sum_{j=1}^{\underline{k}-2}\left\|M\left(
	\bar{Y}_{\tau,j-1},
	\alpha
	\right)\right\|_{2p}\\
	&\hspace{1cm}\times\left\|
	\widehat{A}_{\tau,j,n}-\frac{3}{2\Delta_{\tau,n}}\left[a\left(X_{j\Delta_{\tau,n}}\right)\left(\zeta_{\tau,j+1,n}+\zeta_{\tau,j+2,n}\right)+\left(\Lambda_{\star}\right)^{\frac{1}{2}}\left(\bar{\varepsilon}_{\tau,j+1}-\bar{\varepsilon}_{\tau,j}\right)\right]^{\otimes2}\right\|_{2p}\\
	&\le \frac{C\left(p\right)\underline{k}}{p_{\tau,n}^{2}}
\end{align*}
and note that $p_{\tau,n}^{-1}\le \Delta_{\tau,n}$. With triangle inequality and H\"{o}lder's one,
\begin{align*}
	&\mathbf{E}\left[\left|\sum_{j=1}^{\underline{k}-2}
	M\left(
	\bar{Y}_{\tau,j-1},
	\alpha
	\right)
	\left[
	\Delta_{\tau,n}^{-1}\left(\Delta_{\tau,n}b\left(X_{j\Delta_{\tau,n}}\right)+e_{\tau,j,n}\right)^{\otimes2}
	\right]\right|^{p}\right]^{1/p}\\
	&\le \Delta_{\tau,n}^{-1}\sum_{j=1}^{\underline{k}-2}\left|
	M\left(
	\bar{Y}_{\tau,j-1},
	\alpha
	\right)\right\|_{2p}
	\left\|\Delta_{\tau,n}b\left(X_{j\Delta_{\tau,n}}\right)+e_{\tau,j,n}\right\|_{4p}^{2}\\
	&\le C\left(p\right)\underline{k}\Delta_{\tau,n}.
\end{align*}
In the next place, we can evaluate the following $L^{p}$-norm by the three norms, such that
\begin{align*}
	&\mathbf{E}\left[\left|\sum_{j=1}^{\underline{k}-2}
	M\left(
	\bar{Y}_{\tau,j-1},
	\alpha
	\right)
	\left[
	\Delta_{\tau,n}^{-1}e_{\tau,j,n}\left(\zeta_{\tau,j+1,n}+\zeta_{\tau,j+2,n}'\right)^{T}a\left(X_{j\Delta_{\tau,n}}\right)^{T}
	\right]\right|^{p}\right]^{1/p}\\
	&\le \sum_{i=0}^{2}\mathbf{E}\left[\left|\sum_{1\le 3j+i\le \underline{k}-2}
	M\left(
	\bar{Y}_{\tau,3j+i-1},
	\alpha
	\right)
	\left[
	\Delta_{\tau,n}^{-1}\left(\Delta_{\tau,n}b\left(X_{\left(3j+1\right)\Delta_{\tau,n}}\right)\right)\right.\right.\right.\\
	&\hspace{7cm}\left.\left.\left.\left(\zeta_{\tau,3j+i+1,n}+\zeta_{\tau,3j+i+2,n}'\right)^{T}a\left(X_{\left(3j+1\right)\Delta_{\tau,n}}\right)^{T}
	\right]\right|^{p}\right]^{1/p};
\end{align*}
and then Burkholder's inequality leads to
\begin{align*}
	&\mathbf{E}\left[\left|\sum_{1\le 3j\le \underline{k}-2}
	M\left(
	\bar{Y}_{\tau,3j-1},
	\alpha
	\right)
	\left[
	b\left(X_{3j\Delta_{\tau,n}}\right)\left(\zeta_{\tau,3j+1,n}+\zeta_{\tau,3j+2,n}'\right)^{T}a\left(X_{3j\Delta_{\tau,n}}\right)^{T}
	\right]\right|^{p}\right]^{1/p}\\
	&\le \mathbf{E}\left[\left|\sum_{1\le 3j\le \underline{k}-2}
	\left|M\left(
	\bar{Y}_{\tau,3j-1},
	\alpha
	\right)
	\left[
	b\left(X_{3j\Delta_{\tau,n}}\right)\left(\zeta_{\tau,3j+1,n}+\zeta_{\tau,3j+2,n}'\right)^{T}a\left(X_{3j\Delta_{\tau,n}}\right)^{T}
	\right]\right|^{2}\right|^{p/2}\right]^{1/p}\\
	&\le \left(\sum_{1\le 3j\le \underline{k}-2}
	\left\|\left|M\left(
	\bar{Y}_{\tau,3j-1},
	\alpha
	\right)
	\left[
	b\left(X_{3j\Delta_{\tau,n}}\right)\left(\zeta_{\tau,3j+1,n}+\zeta_{\tau,3j+2,n}'\right)^{T}a\left(X_{3j\Delta_{\tau,n}}\right)^{T}
	\right]\right|^{2}\right\|_{p/2}\right)^{1/2}\\
	&\le \left(\sum_{1\le 3j\le \underline{k}-2}C\left(p\right)\left\|\zeta_{\tau,3j+1,n}+\zeta_{\tau,3j+2,n}'\right\|_{p}^{2}\right)^{1/2}\\
	&\le C\left(p\right)\left(\underline{k}\Delta_{\tau,n}\right)^{1/2}.
\end{align*}
Hence we obtain
\begin{align*}
	&\mathbf{E}\left[\left|\sum_{j=1}^{\underline{k}-2}
	M\left(
	\bar{Y}_{\tau,j-1},
	\alpha
	\right)
	\left[
	\Delta_{\tau,n}^{-1}e_{\tau,j,n}\left(\zeta_{\tau,j+1,n}+\zeta_{\tau,j+2,n}'\right)^{T}a\left(X_{j\Delta_{\tau,n}}\right)^{T}
	\right]\right|^{p}\right]^{1/p}\\
	&\le C\left(p\right)\left(\underline{k}\Delta_{\tau,n}\right)^{1/2}
\end{align*}
and similarly
\begin{align*}
	&\mathbf{E}\left[\left|\sum_{j=1}^{\underline{k}-2}
	M\left(
	\bar{Y}_{\tau,j-1},
	\alpha
	\right)
	\left[
	\Delta_{\tau,n}^{-1}\left(\Delta_{\tau,n}b\left(X_{j\Delta_{\tau,n}}\right)+e_{\tau,j,n}\right)\left(\bar{\varepsilon}_{\tau,j+1}-\bar{\varepsilon}_{\tau,j}\right)^{T}\left(\Lambda_{\star}\right)^{1/2}
	\right]\right|^{p}\right]^{1/p}\\
	&\le C\left(p\right)\left(\frac{\underline{k}}{p_{\tau,n}}\right)^{1/2}.
\end{align*}
Finally because of Lemma \ref{localMeanExpansion}
\begin{align*}
	\mathbf{E}\left[\left|\sum_{j=1}^{\underline{k}-2}
	M\left(
	\bar{Y}_{\tau,j-1},
	\alpha
	\right)
	\left[
	\Delta_{\tau,n}^{-1}e_{\tau,j,n}\left(\zeta_{\tau,j+1,n}+\zeta_{\tau,j+2,n}'\right)^{T}a\left(X_{j\Delta_{\tau,n}}\right)^{T}
	\right]\right|^{p}\right]^{1/p}\le C\left(p\right)\underline{k}\Delta_{\tau,n},
\end{align*}
which completes the proof.
\end{proof}

\subsection{Derivation and evaluation for locally asymptotic quadratic form}

We give the locally asymptotic quadratic form at $\vartheta^{\star}\in\Xi$ for $u_{1}\in\mathbf{R}^{m_{1}}$ and $u_{2}\in\mathbf{R}^{m_{2}}$,
\begin{align*}
	\mathbb{Z}_{1,\tau_{1},n}^{\left(0\right)}\left(u_{1};\vartheta^{\star}\right)
	&:=\exp\left(S_{1,\tau_{1},n}\left(\vartheta^{\star}\right)\left[u_{1}\right]-\frac{1}{2}\Gamma_{1,\tau_{1}}\left(\vartheta^{\star}\right)\left[u_{1}^{\otimes2}\right]+
	r_{1,\tau_{1},n}\left(u_{1};\vartheta^{\star}\right)\right),\\
	\mathbb{Z}_{2,\tau_{2},n}^{\left(0\right)}\left(u_{2};\vartheta^{\star}\right)&:=\exp\left(S_{2,\tau_{2},n}\left(\vartheta^{\star}\right)\left[u_{2}\right]-\frac{1}{2}\Gamma_{2,\tau_{2}}\left(\vartheta^{\star}\right)\left[u_{2}^{\otimes2}\right]+
	r_{2,\tau_{2},n}\left(u_{2};\vartheta^{\star}\right)\right),
\end{align*}
where the residual terms are defined as
\begin{align*}
	r_{1,\tau_{1},n}\left(u_{1};\vartheta^{\ast}\right)&:=\int_{0}^{1}\left(1-s\right)\left\{
	\Gamma_{1,\tau_{1}}\left(\vartheta^{\star}\right)\left[u_{1}^{\otimes2}\right]-\Gamma_{1,\tau_{1},n}\left(\alpha^{\star}+s\underline{k}_{\eta_{1},\tau_{1},n}^{-q_{1}}u_{1};\vartheta^{\star}\right)\left[u_{1}^{\otimes 2}\right]\right\}\mathrm{d}s,\\
	r_{2,\tau_{2},n}\left(u_{2};\vartheta^{\ast}\right)&:=\int_{0}^{1}\left(1-s\right)\left\{
	\Gamma_{2}\left(\vartheta^{\star}\right)\left[u_{2}^{\otimes2}\right]-\Gamma_{2,\tau_{2},n}\left(\beta^{\star}+s\underline{T}_{\eta_{2},n}^{-q_{2}}u_{2};\vartheta^{\star}\right)\left[u_{2}^{\otimes 2}\right]\right\}\mathrm{d}s.
\end{align*}

\begin{proof}[Proof of Lemma \ref{initAlphaLemma}]
We start with the proof of (1). Let us define
\begin{align*}
	\widetilde{S}_{1,\tau_{1},n}\left(\vartheta^{\star}\right)\left[u_{1}\right]
	&:=-\frac{2}{3\underline{k}_{\eta_{1},\tau_{1},n}^{1-q_{1}}}\sum_{j=1}^{\underline{k}_{\eta_{1},\tau_{1},n}-2}\left(\partial_{\alpha}A\left(\bar{Y}_{\tau_{1},j-1},\alpha^{\star}\right)\right)\\
	&\hspace{3cm}\left[u_{1},\Delta_{\tau_{1},n}^{-1}\left(\bar{Y}_{\tau_{1},j+1}-\bar{Y}_{\tau_{1},j}\right)^{\otimes2}
	-\frac{2}{3}A_{\tau_{1},n}\left(\bar{Y}_{\tau_{1},j-1},\alpha^{\star},\Lambda_{\star}\right)\right].
\end{align*}
Because of $\mathbf{E}\left[\left\|\sqrt{n}\left(\hat{\Lambda}_{n}-\Lambda_{\star}\right)\right\|^{p}\right]<\infty$ shown in \citet{Nakakita-Uchida-2018c}, we have the evaluation such that
\begin{align*}
	\sup_{n\in\mathbf{N}}\mathbf{E}\left[\left|S_{1,\tau_{1},n}\left(\vartheta^{\star}\right)-\widetilde{S}_{1,\tau_{1},n}\left(\vartheta^{\star}\right)\right|^{p}\right]
	\le \sup_{n\in\mathbf{N}}C\left(p\right)\frac{\underline{k}_{\eta_{1},\tau_{1},n}^{pq_{1}}}{n^{\frac{p}{2}}} <\infty.
\end{align*}
Hence it is enough to prove that 
\begin{align*}
	\sup_{n\in\mathbf{N}}\mathbf{E}\left[\left|\widetilde{S}_{1,\tau_{1},n}\left(\vartheta^{\star}\right)\right|^{p}\right]<\infty.
\end{align*}
By Lemma \ref{localMeanExpansion}, we obtain the decomposition
\begin{align*}
	&\widetilde{S}_{1,\tau_{1},n}\left(\vartheta^{\star}\right)\left[u_{1}\right]\\
	&=-\frac{2}{3\underline{k}_{\eta_{1},\tau_{1},n}^{1-q_{1}}}\sum_{j=1}^{\underline{k}_{\eta_{1},\tau_{1},n}-2}\left(\partial_{\alpha}A\left(\bar{Y}_{\tau_{1},j-1},\alpha^{\star}\right)\right)\\
	&\hspace{4cm}\left[u_{1},\Delta_{\tau_{1},n}^{-1}\left(\bar{Y}_{\tau_{1},j+1}-\bar{Y}_{\tau_{1},j}\right)^{\otimes2}
	-\frac{2}{3}A_{\tau_{1},n}\left(\bar{Y}_{\tau_{1},j-1},\alpha^{\star},\Lambda_{\star}\right)\right]\\
	&=-\frac{2}{3\underline{k}_{\eta_{1},\tau_{1},n}^{1-q_{1}}}\sum_{j=1}^{\underline{k}_{\eta_{1},\tau_{1},n}-2}\left(\partial_{\alpha}A\left(\bar{Y}_{\tau_{1},j-1},\alpha^{\star}\right)\right)\left[u_{1},\frac{2}{3}\widehat{A}_{\tau_{1},j,n}-\frac{2}{3}A_{\tau_{1},n}\left(\bar{Y}_{\tau_{1},j-1},\alpha^{\star},\Lambda_{\star}\right)\right]\\
	&\qquad-\frac{2}{3\underline{k}_{\eta_{1},\tau_{1},n}^{1-q_{1}}}\sum_{j=1}^{\underline{k}_{\eta_{1},\tau_{1},n}-2}\left(\partial_{\alpha}A\left(\bar{Y}_{j-1},\alpha^{\star}\right)\right)\left[u_{1},\Delta_{\tau_{1},n}^{-1}\left(\bar{Y}_{\tau_{1},j+1}-\bar{Y}_{\tau_{1},j}\right)^{\otimes2}
	-\frac{2}{3}\widehat{A}_{\tau_{1},j,n}\right]\\
	&=M_{1,\tau_{1},n}+\dot{R}_{1,\tau_{1},n}+\ddot{R}_{1,\tau_{1},n},
\end{align*}
where
\begin{align*}
	M_{1,\tau_{1},n}&:=-\frac{4}{9\underline{k}_{\eta_{1},\tau_{1},n}^{1-q_{1}}}\sum_{j=1}^{\underline{k}_{\eta_{1},\tau_{1},n}-2}\left(\partial_{\alpha}A\left(\bar{Y}_{\tau_{1},j-1},\alpha^{\star}\right)\right)\left[u_{1},\widehat{A}_{\tau_{1},j,n}-A_{\tau_{1},n}\left(X_{j\Delta_{\tau_{1},n}}\right)\right],\\
	\dot{R}_{1,\tau_{1},n}&:=-\frac{2}{3\underline{k}_{\eta_{1},\tau_{1},n}^{1-q_{1}}}\sum_{j=1}^{\underline{k}_{\eta_{1},\tau_{1},n}-2}\left(\partial_{\alpha}A\left(\bar{Y}_{\tau_{1},j-1},\alpha^{\star}\right)\right)\left[u_{1},\Delta_{n}^{-1}\left(\bar{Y}_{\tau_{1},j+1}-\bar{Y}_{\tau_{1},j}\right)^{\otimes2}
	-\frac{2}{3}\widehat{A}_{\tau_{1},j,n}\right],\\
	\ddot{R}_{1,\tau_{1},n}&:=-\frac{4}{9\underline{k}_{\eta_{1},\tau_{1},n}^{1-q_{1}}}\sum_{j=1}^{\underline{k}_{\eta_{1},\tau_{1},n}-2}\left(\partial_{\alpha}A\left(\bar{Y}_{\tau_{1},j-1},\alpha^{\star}\right)\right)\left[u_{1},A_{\tau_{1},n}\left(X_{j\Delta_{\tau_{1},n}}\right)-A_{\tau,n}\left(\bar{Y}_{\tau_{1},j-1}\right)\right],
\end{align*}
$a(x):=a(x,\alpha^{\star})$ and $A_{\tau_{1},n}(x):=A_{\tau_{1},n}(x,\alpha^{\star},\Lambda_{\star})$.
$\sup_{n\in\mathbf{N}}\left\|\dot{R}_{1,\tau_{1},n}\right\|_{p}<\infty$ can be derived directly from Lemma \ref{approxQuadratic} and the assumption $\underline{k}_{\eta_{1},\tau_{1},n}^{q}\Delta_{\tau,n}\to0$.
We set $\ddot{R}_{1,i,\tau_{1},n}$ as
\begin{align*}
	\ddot{R}_{1,i,\tau_{1},n}&:=-\frac{4}{9\underline{k}_{\eta_{1},\tau_{1},n}^{1-q_{1}}}\sum_{1\le 3j+i\le \underline{k}_{\eta_{1},\tau_{1},n}-2}\left(\partial_{\alpha}A\left(\bar{Y}_{\tau_{1},3j+i-1},\alpha^{\star}\right)\right)\\
	&\hspace{5cm}\left[u_{1},A_{\tau_{1},n}\left(X_{\left(3j+i\right)\Delta_{\tau_{1},n}}\right)-A_{\tau_{1},n}\left(\bar{Y}_{3j+i-1}{\tau_{1}}\right)\right],
\end{align*}
and examine only the case $i=0$ without loss of generality. We also define $\dddot{R}_{1,0,\tau_{1},n}$ as
\begin{align*}
	\dddot{R}_{1,0,\tau_{1},n}&:=-\frac{4}{9\underline{k}_{\eta_{1},\tau_{1},n}^{1-q_{1}}}\sum_{1\le 3j\le \underline{k}_{\eta_{1},\tau_{1},n}-2}\mathbf{E}\left[\left(\partial_{\alpha}A\left(\bar{Y}_{\tau_{1},3j-1},\alpha^{\star}\right)\right)\right.\\
	&\hspace{5cm}\left.\left[u_{1},A_{\tau_{1},n}\left(X_{3j\Delta_{\tau_{1},n}}\right)-A_{\tau_{1},n}\left(\bar{Y}_{\tau_{1},3j-1}\right)\right]|\mathcal{H}_{3j-1,n}^{\tau_{1}}\right].
\end{align*}
Because of Burkholder's inequality, it holds
\begin{align*}
	\mathbf{E}\left[\left|\ddot{R}_{1,0,\tau_{1},n}-\dddot{R}_{1,0,\tau_{1},n}\right|^{p}\right]^{1/p}&\le \frac{C\left(p\right)}{\underline{k}_{\eta_{1},\tau_{1},n}^{1-2q}}<\infty.
\end{align*}
Hence it is sufficient to see $\left\|\dddot{R}_{1,0,\tau_{1},n}\right\|_{p}<\infty$,
and because of Lemma \ref{CED}, we can have
\begin{align*}
	\left\|\ddot{R}_{1,0,\tau_{1},n}\right\|_{p}&\le C\left(p\right)\underline{k}_{\eta_{1},\tau_{1},n}^{q_{1}}\Delta_{\tau_{1},n},
\end{align*}
and then $\sup_{n\in\mathbf{N}}\left\|\ddot{R}_{1,\tau_{1},n}\right\|_{p}<\infty$.
We define $M_{1,i,\tau_{1},n}$ for $i=0,1,2$ as
\begin{align*}
	&M_{1,i,\tau_{1},n}\\
	&=-\frac{4}{9\underline{k}_{\eta_{1},\tau_{1},n}^{1-q_{1}}}
	\sum_{1\le 3j+i\le \underline{k}_{\eta_{1},\tau_{1},n}-2}\left(\partial_{\alpha}A\left(\bar{Y}_{3j+i-1}{\tau_{1}},\alpha^{\star}\right)\right)\left[u_{1},\widehat{A}_{\tau_{1},3j+i,n}-A_{\tau_{1},n}\left(X_{\left(3j+i\right)\Delta_{\tau_{1},n}}\right)\right],
\end{align*}
and since the property of conditional expectation $\mathbf{E}\left[\widehat{A}_{\tau_{1},j,n}|\mathcal{H}_{j,n}^{\tau_{1}}\right]=A_{\tau_{1},n}\left(X_{j\Delta_{\tau_{1},n}}\right)$holds, Burkholder's inequality verifies
\begin{align*}
	\mathbf{E}\left[\left|M_{1,i,\tau_{1},n}\right|^{p}\right]&\le \frac{C\left(p\right)}{\underline{k}_{\eta_{1},\tau_{1},n}^{2-2q_{1}}}<\infty,
\end{align*}
for all $i$ because of the integrability.

In the second place, we show (2) holds. Let us define
\begin{align*}
	\mathbb{V}_{1,\tau_{1},n}^{\left(\dagger\right)}\left(\alpha\right)&=-\frac{1}{2\underline{k}_{\eta_{1},\tau_{1},n}}\sum_{j=1}^{\underline{k}_{\eta_{1},\tau_{1},n}-2}
	\left(\left\|\frac{2}{3}
	A_{\tau_{1},n}\left(\bar{Y}_{\tau_{1},j-1},\alpha,\Lambda_{\star}\right)\right\|^{2}
	-\left\|\frac{2}{3}
	A_{\tau_{1},n}\left(\bar{Y}_{\tau_{1},j-1},\alpha^{\star},\Lambda_{\star}\right)\right\|^{2}\right.\\
	&\hspace{3cm}\left.-2\left(\Delta_{\tau_{1},n}^{-1}\left(\bar{Y}_{\tau_{1},j+1}-\bar{Y}_{\tau_{1},j}\right)^{\otimes2}\right)\left[\frac{2}{3}A_{\tau_{1},n}\left(\bar{Y}_{\tau_{1},j-1},\alpha,\Lambda_{\star}\right)\right]\right.\\
	&\hspace{3cm}\left.+2\left(\Delta_{\tau_{1},n}^{-1}\left(\bar{Y}_{\tau_{1},j+1}-\bar{Y}_{\tau_{1},j}\right)^{\otimes2}\right)\left[\frac{2}{3}A_{\tau_{1},n}\left(\bar{Y}_{\tau_{1},j-1},\alpha^{\star},\Lambda_{\star}\right)\right]\right),
\end{align*}
and then the evaluation $\sup_{n\in\mathbf{N}}\mathbf{E}\left[\left(\sup_{\alpha\in\Theta_{1}}\underline{k}_{\eta_{1},\tau_{1},n}^{\epsilon_{1}}\left|\mathbb{V}_{1,\tau_{1},n}\left(\alpha;\vartheta^{\star}\right)-\mathbb{V}_{1,\tau_{1},n}^{\left(\dagger\right)}\left(\alpha;\vartheta^{\star}\right)\right|\right)^{p}\right]<\infty$ can be easily obtained due to $\mathbf{E}\left[\left\|\sqrt{n}\left(\hat{\Lambda}_{n}-\Lambda_{\star}\right)\right\|^{p}\right]<\infty$. We also define
\begin{align*}
	\mathbb{V}_{1,\tau_{1},n}^{\left(\ddagger\right)}\left(\alpha\right)&:=-\frac{2}{9\underline{k}_{\eta_{1},\tau_{1},n}}\sum_{j=1}^{\underline{k}_{\eta_{1},\tau_{1},n}-2}
	\left(\left\|
	A_{\tau_{1},n}\left(\bar{Y}_{\tau_{1},j-1},\alpha,\Lambda_{\star}\right)\right\|^{2}
	-\left\|
	A_{\tau_{1},n}\left(\bar{Y}_{\tau_{1},j-1},\alpha^{\star},\Lambda_{\star}\right)\right\|^{2}\right.\\
	&\hspace{4cm}\left.-3\left(A_{\tau_{1},n}\left(X_{j\Delta_{\tau_{1},n}},\alpha^{\star},\Lambda_{\star}\right)\right)\left[A_{\tau_{1},n}\left(\bar{Y}_{\tau_{1},j-1},\alpha,\Lambda_{\star}\right)\right]\right.\\
	&\hspace{4cm}\left.+3\left(A_{\tau_{1},n}\left(X_{j\Delta_{\tau_{1},n}},\alpha^{\star},\Lambda_{\star}\right)\right)\left[A_{\tau_{1},n}\left(\bar{Y}_{\tau_{1},j-1},\alpha^{\star},\Lambda_{\star}\right)\right]\right);
\end{align*}
then
\begin{align*}
	&\underline{k}_{\eta_{1},\tau_{1},n}^{\epsilon_{1}}\left(\mathbb{V}_{1,\tau_{1},n}^{\left(\dagger\right)}\left(\alpha\right)-\mathbb{V}_{1,\tau_{1},n}^{\left(\ddagger\right)}\left(\alpha\right)\right)\\
	&=\frac{2\underline{k}_{\eta_{1},\tau_{1},n}^{\epsilon_{1}}}{3\underline{k}_{\eta_{1},\tau_{1},n}}\sum_{j=1}^{\underline{k}_{\eta_{1},\tau_{1},n}-2}
	\left(\left(\Delta_{\tau_{1},n}^{-1}\left(\bar{Y}_{\tau_{1},j+1}-\bar{Y}_{\tau_{1},j}\right)^{\otimes2}-A_{\tau_{1},n}\left(X_{j\Delta_{\tau_{1},n}},\alpha^{\star},\Lambda_{\star}\right)\right)\right.\\
	&\hspace{5cm}\left[A_{\tau_{1},n}\left(\bar{Y}_{\tau_{1},j-1},\alpha,\Lambda_{\star}\right)\right]\\
	&\hspace{3cm}-\left(\Delta_{\tau_{1},n}^{-1}\left(\bar{Y}_{\tau_{1},j+1}-\bar{Y}_{\tau_{1},j}\right)^{\otimes2}-A_{\tau_{1},n}\left(X_{j\Delta_{\tau_{1},n}},\alpha^{\star},\Lambda_{\star}\right)\right)\\
	&\hspace{5cm}\left.\left[A_{\tau_{1},n}\left(\bar{Y}_{\tau_{1},j-1},\alpha^{\star},\Lambda_{\star}\right)\right]\right).
\end{align*}
Using Lemma \ref{approxQuadratic} as $\dot{R}_{1,\tau_{1},n}$, it is easy to have
\begin{align*}
	\left\|\underline{k}_{\eta_{1},\tau_{1},n}^{\epsilon_{1}}\left(\mathbb{V}_{1,\tau_{1},n}^{\left(\dagger\right)}\left(\alpha\right)-\mathbb{V}_{1,\tau_{1},n}^{\left(\ddagger\right)}\left(\alpha\right)\right)\right\|_{p}&\le C\left(p\right)\underline{k}_{\eta_{1},\tau_{1},n}^{\epsilon_{1}}\Delta_{\tau_{1},n},
\end{align*}
and similarly
\begin{align*}
	\left\|\underline{k}_{\eta_{1},\tau_{1},n}^{\epsilon_{1}}\partial_{\alpha}\left(\mathbb{V}_{1,\tau_{1},n}^{\left(\dagger\right)}\left(\alpha\right)-\mathbb{V}_{1,\tau_{1},n}^{\left(\ddagger\right)}\left(\alpha\right)\right)\right\|_{p}&\le C\left(p\right)\underline{k}_{\eta_{1},\tau_{1},n}^{\epsilon_{1}}\Delta_{\tau_{1},n};
\end{align*}
then Sobolev's inequality verifies 
\begin{align*}
	\sup_{n\in\mathbf{N}}\mathbf{E}\left[\left(\sup_{\alpha\in\Theta_{1}}\underline{k}_{\eta_{1},\tau_{1},n}^{\epsilon_{1}}\left|\mathbb{V}_{1,\tau_{1},n}^{\left(\dagger\right)}\left(\alpha;\vartheta^{\star}\right)-\mathbb{V}_{1,\tau_{1},n}^{\left(\ddagger\right)}\left(\alpha;\vartheta^{\star}\right)\right|\right)^{p}\right]<\infty.
\end{align*}
Hence it is sufficient to obtain the evaluation
\begin{align*}
	\sup_{n\in\mathbf{N}}\mathbf{E}\left[\left(\sup_{\alpha\in\Theta_{1}}\underline{k}_{\eta_{1},\tau_{1},n}^{\epsilon_{1}}\left|\mathbb{V}_{1,\tau_{1},n}\left(\alpha;\vartheta^{\star}\right)-\mathbb{V}_{1,\tau_{1},n}^{\left(\ddagger\right)}\left(\alpha;\vartheta^{\star}\right)\right|\right)^{p}\right]<\infty.
\end{align*}
Let us define $M_{1,\tau_{1},n}^{\left(\dagger\right)}$ and $R_{1,\tau_{1},n}^{\left(\dagger\right)}$ as
\begin{align*}
	M_{1,\tau_{1},n}^{\left(\dagger\right)}&=-\frac{2}{9\underline{k}_{\eta_{1},\tau_{1},n}}\sum_{j=1}^{\underline{k}_{\eta_{1},\tau_{1},n}-2}
	\left(\left\|
	A_{\tau_{1},n}\left(X_{j\Delta_{\tau_{1},n}},\alpha,\Lambda_{\star}\right)\right\|^{2}\right.\\
	&\hspace{3.5cm}-3\left(A_{\tau_{1},n}\left(X_{j\Delta_{\tau_{1},n}},\alpha^{\star},\Lambda_{\star}\right)\right)\left[A_{\tau_{1},n}\left(X_{j\Delta_{\tau_{1},n}},\alpha,\Lambda_{\star}\right)\right]\\
	&\hspace{3.5cm}-\left\|
	A_{\tau_{1},n}\left(X_{j\Delta_{\tau_{1},n}},\alpha^{\star},\Lambda_{\star}\right)\right\|^{2}\\
	&\hspace{3.5cm}\left.+3\left(A_{\tau_{1},n}\left(X_{j\Delta_{\tau_{1},n}},\alpha^{\star},\Lambda_{\star}\right)\right)\left[A_{\tau_{1},n}\left(X_{j\Delta_{\tau_{1},n}},\alpha^{\star},\Lambda_{\star}\right)\right]\right),\\
	R_{1,\tau_{1},n}^{\left(\dagger\right)}&=-\frac{2}{9\underline{k}_{\eta_{1},\tau_{1},n}}\sum_{j=1}^{\underline{k}_{\eta_{1},\tau_{1},n}-2}
	\left(\left\|
	A_{\tau_{1},n}\left(X_{j\Delta_{\tau_{1},n}},\alpha,\Lambda_{\star}\right)\right\|^{2}\right.\\
	&\hspace{3.5cm}-3\left(A_{\tau,n}\left(X_{j\Delta_{\tau_{1},n}},\alpha^{\star},\Lambda_{\star}\right)\right)\left[A_{\tau,n}\left(X_{j\Delta_{\tau_{1},n}},\alpha,\Lambda_{\star}\right)\right]\\
	&\hspace{3.5cm}-\left\|
	A_{\tau,n}\left(X_{j\Delta_{\tau_{1},n}},\alpha^{\star}\Lambda_{\star}\right)\right\|^{2}\\
	&\hspace{3.5cm}\left.+3\left(A_{\tau,n}\left(X_{j\Delta_{\tau_{1},n}},\alpha^{\star},\Lambda_{\star}\right)\right)\left[A_{\tau,n}\left(X_{j\Delta_{\tau_{1},n}},\alpha^{\star},\Lambda_{\star}\right)\right]\right)\\
	&\quad-\frac{2}{9\underline{k}_{\eta_{1},\tau_{1},n}}\sum_{j=1}^{\underline{k}_{\eta_{1},\tau_{1},n}-2}
	\left(\left\|
	A_{\tau_{1},n}\left(\bar{Y}_{\tau_{1},j-1},\alpha,\Lambda_{\star}\right)\right\|^{2}\right.\\
	&\hspace{4cm}-3\left(A_{\tau,n}\left(X_{j\Delta_{\tau_{1},n}},\alpha^{\star},\Lambda_{\star}\right)\right)\left[A_{\tau,n}\left(\bar{Y}_{\tau_{1},j-1},\alpha,\Lambda_{\star}\right)\right]\\
	&\hspace{4cm}-\left\|
	A_{\tau_{1},n}\left(\bar{Y}_{\tau_{1},j-1},\alpha^{\star}\Lambda_{\star}\right)\right\|^{2}\\
	&\hspace{4cm}\left.+3\left(A_{\tau,n}\left(X_{j\Delta_{\tau_{1},n}},\alpha^{\star},\Lambda_{\star}\right)\right)\left[A_{\tau,n}\left(\bar{Y}_{\tau_{1},j-1},\alpha^{\star},\Lambda_{\star}\right)\right]\right).
\end{align*}
$\sup_{n\in\mathbf{N}}\left\|\sup_{\alpha\in\Theta_{1}}\underline{k}_{\eta_{1},\tau_{1},n}^{\epsilon_{1}}R_{1,\tau_{1},n}^{\left(\dagger\right)}\left(\alpha\right)\right\|_{p}\le C\left(p\right)\Delta_{\tau_{1},n}^{\frac{1}{2}}$ and $\sup_{n\in\mathbf{N}}\left\|\sup_{\alpha\in\Theta_{1}}\underline{k}_{\eta_{1},\tau_{1},n}^{\epsilon_{1}}M_{1,\tau_{1},n}^{\left(\dagger\right)}\left(\alpha\right)\right\|_{p}<\infty$ are easily obtained by the discussion parallel to \citet{Nakakita-Uchida-2018c} and \citet{Yoshida-2011} respectively.

(3) and (4) are shown in the way parallel to \citet{Nakakita-Uchida-2018c} and the discussion above respectively.
\end{proof}

\begin{proof}[Proof of Lemma \ref{initBetaLemma}]
We decompose
\begin{align*}
	S_{2,\tau_{2},n}\left(\vartheta^{\star}\right)&:=-\frac{1}{\underline{T}_{\eta_{2},n}^{1-q_{2}}}\sum_{j=1}^{\underline{k}_{\eta_{2},\tau_{2},n}-2}\left(\partial_{\beta}b\left(\bar{Y}_{\tau_{2},j-1},\beta^{\star}\right)\right)
	\left[\bar{Y}_{\tau_{2},j+1}-\bar{Y}_{\tau_{2},j}-\Delta_{\tau_{2},n}b\left(\bar{Y}_{\tau_{2},j-1},\beta^{\star}\right)\right]\\
	&=-\frac{1}{\underline{T}_{\eta_{2},n}^{1-q_{2}}}\sum_{j=1}^{\underline{k}_{\eta_{2},\tau_{2},n}-2}\left(\partial_{\beta}b\left(\bar{Y}_{\tau_{2},j-1},\beta^{\star}\right)\right)
	\left[a\left(X_{j\Delta_{\tau_{2},n}}\right)\left(\zeta_{\tau_{2},j+1,n}+\zeta_{\tau_{2},j+2,n}'\right)\right]\\
	&\qquad-\frac{1}{\underline{T}_{\eta_{2},n}^{1-q_{2}}}\sum_{j=1}^{\underline{k}_{\eta_{2},\tau_{2},n}-2}\left(\partial_{\beta}b\left(\bar{Y}_{\tau_{2},j-1},\beta^{\star}\right)\right)
	\left[\left(\Lambda_{\star}\right)^{1/2}\left(\bar{\varepsilon}_{\tau_{2},j+1}-\bar{\varepsilon}_{\tau_{2},j}\right)\right]\\
	&\qquad-\frac{1}{\underline{T}_{\eta_{2},n}^{1-q_{2}}}\sum_{j=1}^{\underline{k}_{\eta_{2},\tau_{2},n}-2}\left(\partial_{\beta}b\left(\bar{Y}_{\tau_{2},j-1},\beta^{\star}\right)\right)
	\left[e_{\tau_{2},j,n}\right]\\
	&=M_{2,\tau_{2},n}+R_{2,\tau_{2},n},
\end{align*}
where
\begin{align*}
	M_{2,\tau_{2},n}&:=-\frac{1}{\underline{T}_{\eta_{2},n}^{1-q_{2}}}\sum_{j=1}^{\underline{k}_{\eta_{2},\tau_{2},n}-2}\left(\partial_{\beta}b\left(\bar{Y}_{\tau_{2},j-1},\beta^{\star}\right)\right)
	\left[a\left(X_{j\Delta_{\tau_{2},n}}\right)\left(\zeta_{\tau_{2},j+1,n}+\zeta_{\tau_{2},j+2,n}'\right)\right]\\
	&\qquad-\frac{1}{\underline{T}_{\eta_{2},n}^{1-q_{2}}}\sum_{j=1}^{\underline{k}_{\eta_{2},\tau_{2},n}-2}\left(\partial_{\beta}b\left(\bar{Y}_{\tau_{2},j-1},\beta^{\star}\right)\right)
	\left[\left(\Lambda_{\star}\right)^{1/2}\left(\bar{\varepsilon}_{\tau_{2},j+1}-\bar{\varepsilon}_{\tau_{2},j}\right)\right],\\
	R_{2,\tau_{2},n}&=-\frac{1}{\underline{T}_{\eta_{2},n}^{1-q_{2}}}\sum_{j=1}^{\underline{k}_{\eta_{2},\tau_{2},n}-2}\left(\partial_{\beta}b\left(\bar{Y}_{\tau_{2},j-1},\beta^{\star}\right)\right)
	\left[e_{\tau_{2},j,n}\right].
\end{align*}
Since Burkholder's inequality verifies the evaluations such that
\begin{align*}
	&\mathbf{E}\left[\left|\frac{1}{\underline{T}_{\eta_{2},n}^{1-q_{2}}}\sum_{1\le 3j\le \underline{k}_{\eta_{2},\tau_{2},n}-2}\left(\partial_{\beta}b\left(\bar{Y}_{\tau_{2},3j-1},\beta^{\star}\right)\right)
	\left[a\left(X_{3j\Delta_{\tau_{2},n}}\right)\left(\zeta_{\tau_{2},3j+1,n}+\zeta_{\tau_{2},3j+2,n}'\right)\right]\right|^{p}\right]\\
	&\le \frac{C\left(p\right)}{\underline{T}_{\eta_{2},n}^{p-pq_{2}}}\mathbf{E}\left[\left|\sum_{1\le 3j\le \underline{k}_{\eta_{2},\tau_{2},n}-2}\left|\partial_{\beta}b\left(\bar{Y}_{\tau_{2},3j-1},\beta^{\star}\right)\right|^{2}
	\left|a\left(X_{3j\Delta_{\tau_{2},n}}\right)\left(\zeta_{\tau_{2},3j+1,n}+\zeta_{\tau_{2},3j+2,n}'\right)\right|^{2}\right|^{p/2}\right]\\
	&\le \frac{C\left(p\right)\underline{T}_{\eta_{2},n}^{\frac{p}{2}}}{\underline{T}_{\eta_{2},n}^{p\left(1-q_{2}\right)}},
\end{align*}
and
\begin{align*}
	\mathbf{E}\left[\left|\frac{1}{\underline{T}_{\eta_{2},n}^{1-q_{2}}}\sum_{1\le 3j\le \underline{k}_{\eta_{2},\tau_{2},n}-2}\left(\partial_{\beta}b\left(\bar{Y}_{\tau_{2},3j-1},\beta^{\star}\right)\right)
	\left[\left(\Lambda_{\star}\right)^{1/2}\left(\bar{\varepsilon}_{\tau_{2},j+1}-\bar{\varepsilon}_{\tau_{2},j}\right)\right]\right|^{p}\right]\le \frac{C\left(p\right)T_{n}^{\frac{p}{2}}}{\underline{T}_{\eta_{2},n}^{p\left(1-q_{2}\right)}},
\end{align*}
we obtain $\sup_{n\in\mathbf{N}}\left\|M_{2,\tau_{2},n}\right\|_{p}<\infty$. With respect to $R_{2,\tau_{2},n}$, Burkholder's inequality also leads to
\begin{align*}
	&\mathbf{E}\left[\left|\frac{1}{\underline{T}_{\eta_{2},n}^{1-q_{2}}}\sum_{1\le 3j \le \underline{k}_{\eta_{2},\tau_{2},n}-2}\left(\partial_{\beta}b\left(\bar{Y}_{\tau_{2},3j-1},\beta^{\star}\right)\right)
	\left[e_{\tau_{2},3j,n}-\mathbf{E}\left[e_{\tau_{2},3j,n}|\mathcal{H}_{3j,n}^{\tau_{2}}\right]\right]\right|^{p}\right]\\
	&\le C\left(p\right)\mathbf{E}\left[\left|\frac{1}{\underline{T}_{\eta_{2},n}^{2-2q_{2}}}\sum_{1\le 3j \le \underline{k}_{\eta_{2},\tau_{2},n}-2}\left|\partial_{\beta}b\left(\bar{Y}_{\tau_{2},3j-1},\beta^{\star}\right)\right|^{2}
	\left|e_{\tau_{2},3j,n}-\mathbf{E}\left[e_{\tau_{2},3j,n}|\mathcal{H}_{3j,n}^{\tau_{2}}\right]\right|^{2}\right|^{p/2}\right]\\
	&\le\frac{C\left(p\right)\underline{k}_{\eta_{2},\tau_{2},n}^{\frac{p}{2}}\Delta_{\tau_{2},n}^{p}}{\underline{T}_{\eta_{2},n}^{p\left(1-q_{2}\right)}}
	\le C\left(p\right)\left[\underline{k}_{\eta_{2},\tau_{2},n}^{q_{2}-\frac{1}{2}}\Delta_{\tau_{2},n}^{q_{2}}\right]^{p},
\end{align*}
and we also have
\begin{align*}
&	\mathbf{E}\left[\left|\frac{1}{\underline{T}_{\eta_{2},n}^{1-q_{2}}}\sum_{1\le 3j \le \underline{k}_{\eta_{2},\tau_{2},n}-2}\left(\partial_{\beta}b\left(\bar{Y}_{\tau_{2},3j-1},\beta^{\star}\right)\right)
	\left[\mathbf{E}\left[e_{\tau_{2},3j,n}|\mathcal{H}_{3j,n}^{\tau_{2}}\right]\right]\right|^{p}\right]
\\
	&\le \frac{C\left(p\right)\underline{k}_{\eta_{2},\tau_{2},n}^{p}\Delta_{\tau_{2},n}^{2p}}{\underline{T}_{\eta_{2},n}^{p\left(1-q_{2}\right)}}
	\le \frac{C\left(p\right)\underline{k}_{\eta_{2},\tau_{2},n}^{p}\Delta_{\tau_{2},n}^{2p}}{\left(\underline{k}_{\eta_{2},\tau_{2},n}\Delta_{\tau_{2},n}\right)^{p
			\left(1-q_{2}\right)}}
	\le C\left(p\right)\left[\underline{k}_{\eta_{2},\tau_{2},n}^{q_{2}}\Delta_{\tau_{2},n}^{1+q_{2}}\right]^{p},
\end{align*}
which is led by  $\left|\mathbf{E}\left[e_{\tau_{2},j,n}|\mathcal{H}_{j,n}^{\tau_{2}}\right]\right|\le C\Delta_{\tau_{2},n}^{2}\left(1+\left|X_{j\Delta_{\tau_{2},n}}\right|\right)^{C}$ shown in \citet{Nakakita-Uchida-2018c}. The derivation of (ii) is parallel to that of (ii) in Lemma 1.

(3) and (4) are shown 
in an analogous  way 
to \citet{Nakakita-Uchida-2018c} and the discussion above respectively.
\end{proof}

\begin{proof}[Proof of Theorem \ref{PLDI}]
Lemma \ref{initAlphaLemma}, Lemma \ref{initBetaLemma} and Theorem 3 in \citet{Yoshida-2011} lead to the PLDI as discussed in \citet{Nakakita-Uchida-2018c}. The $L^{M}$-evaluation of estimators also can be obtained with a parallel discussion to \citet{Nakakita-Uchida-2018c}.
\end{proof}

\begin{proof}[Proof of Theorem \ref{result}]
First of all, we show the $L^{p}$-boundedness
\begin{align*}
    \mathbf{E}\left[\left|\sqrt{k_{\tau_{3},n}}\left(\hat{\alpha}_{J_{1},n}-\alpha^{\star}\right)\right|^{p}\right]<\infty,
    \mathbf{E}\left[\left|\sqrt{T_{n}}\left(\hat{\beta}_{J_{2},n}-\beta^{\star}\right)\right|^{p}\right]<\infty.
\end{align*}
As discussed in \citet{Kaino-Uchida-2018b}, for all $k=1,\ldots,J_{1}$, on $K_{n}\left(\hat{\alpha}_{k-1,n}\right)$,
\begin{align*}
    &\partial_{\alpha}\mathbb{H}_{1,\tau_{3},n}\left(\alpha^{\star}|\hat{\Lambda}_{n}\right)\\
    &= \partial_{\alpha}\mathbb{H}_{1,\tau_{3},n}\left(\hat{\alpha}_{k-1,n}|\hat{\Lambda}_{n}\right) + \partial_{\alpha}^{2}\mathbb{H}_{1,\tau_{3},n}\left(\hat{\alpha}_{k-1,n}|\hat{\Lambda}_{n}\right)\left[\alpha^{\star}-\hat{\alpha}_{k-1,n}\right]\\
    &\quad + \int_{0}^{1}\left(1-s\right)\partial_{\alpha}^{3}\mathbb{H}_{1,\tau_{3},n}\left(\hat{\alpha}_{k-1,n}+s\left(\alpha^{\star}-\hat{\alpha}_{k-1,n}\right)|\hat{\Lambda}_{n}\right)\mathrm{d}s\left[\left(\alpha^{\star}-\hat{\alpha}_{k-1,n}\right)^{\otimes2}\right],
\end{align*}
and
\begin{align*}
    \hat{\alpha}_{k,n}&=\hat{\alpha}_{k-1,n} - \left(\frac{1}{k_{\tau_{3},n}}\partial_{\alpha}^{2}\mathbb{H}_{1,\tau_{3},n}\left(\hat{\alpha}_{k-1,n}|\hat{\Lambda}_{n}\right)\right)^{-1}\frac{1}{k_{\tau_{3},n}}\partial_{\alpha}\mathbb{H}_{1,\tau_{3},n}\left(\hat{\alpha}_{k-1,n}|\hat{\Lambda}_{n}\right)\\
    &=\hat{\alpha}_{k-1,n} - \left(\frac{1}{k_{\tau_{3},n}}\partial_{\alpha}^{2}\mathbb{H}_{1,\tau_{3},n}\left(\hat{\alpha}_{k-1,n}|\hat{\Lambda}_{n}\right)\right)^{-1}\frac{1}{k_{\tau_{3},n}}\partial_{\alpha}\mathbb{H}_{1,\tau_{3},n}\left(\alpha^{\star}|\hat{\Lambda}_{n}\right)\\
    &\quad+\left(\alpha^{\star}-\hat{\alpha}_{k-1,n}\right)\\
    &\quad+\left(\frac{1}{k_{\tau_{3},n}}\partial_{\alpha}^{2}\mathbb{H}_{1,\tau_{3},n}\left(\hat{\alpha}_{k-1,n}|\hat{\Lambda}_{n}\right)\right)^{-1}\\
    &\qquad\times\frac{1}{k_{\tau_{3},n}}\int_{0}^{1}\left(1-s\right)\partial_{\alpha}^{3}\mathbb{H}_{1,\tau_{3},n}\left(\hat{\alpha}_{k-1,n}+s\left(\alpha^{\star}-\hat{\alpha}_{k-1,n}\right)|\hat{\Lambda}_{n}\right)\mathrm{d}s\left[\left(\alpha^{\star}-\hat{\alpha}_{k-1,n}\right)^{\otimes2}\right],
\end{align*}
that is,
\begin{align*}
    &\hat{\alpha}_{k,n}-\alpha^{\star}\\
    &= - \left(\frac{1}{k_{\tau_{3},n}}\partial_{\alpha}^{2}\mathbb{H}_{1,\tau_{3},n}\left(\hat{\alpha}_{k-1,n}|\hat{\Lambda}_{n}\right)\right)^{-1}\frac{1}{k_{\tau_{3},n}}\partial_{\alpha}\mathbb{H}_{1,\tau_{3},n}\left(\alpha^{\star}|\hat{\Lambda}_{n}\right)\\
    &\quad+\left(\frac{1}{k_{\tau_{3},n}}\partial_{\alpha}^{2}\mathbb{H}_{1,\tau_{3},n}\left(\hat{\alpha}_{k-1,n}|\hat{\Lambda}_{n}\right)\right)^{-1}\\
    &\qquad\times\frac{1}{k_{\tau_{3},n}}\int_{0}^{1}\left(1-s\right)\partial_{\alpha}^{3}\mathbb{H}_{1,\tau_{3},n}\left(\hat{\alpha}_{k-1,n}+s\left(\alpha^{\star}-\hat{\alpha}_{k-1,n}\right)|\hat{\Lambda}_{n}\right)\mathrm{d}s\left[\left(\alpha^{\star}-\hat{\alpha}_{k-1,n}\right)^{\otimes2}\right].
\end{align*}
The analogous argument of \citet{Kamatani-Uchida-2015} verifies
\begin{align*}
    \mathbf{E}\left[\left\|\left(\frac{1}{k_{\tau_{3},n}}\partial_{\alpha}^{2}\mathbb{H}_{1,\tau_{3},n}\left(\hat{\alpha}_{k-1,n}|\hat{\Lambda}_{n}\right)\right)^{-1}\right\|^{p}\mathbf{1}_{K_{n}\left(\hat{\alpha}_{k-1,n}\right)}\right]<\infty,\ P\left(K_{n}^{c}\left(\hat{\alpha}_{k-1,n}\right)\right)\le \frac{C(L)}{k_{\tau_{3},n}^{L}}.
\end{align*}
Then we obtain the $L^{p}$-boundedness for any $p\ge 1$ and $k\in\mathbf{N}$ such that $2^{k}q_{1}'\le 1/2$, where $q_{1}'=q_{1}\left(\eta_{1}-\gamma/\tau_{1}\right)/\left(1-\gamma'/\tau_{3}\right)$, that is to say, $k\le -\log_{2}q_{1}'-1$, as the discussion in \citet{Kamatani-Uchida-2015},
\begin{align*}
    &\mathbf{E}\left[\left|k_{\tau_{3},n}^{2^{k}q_{1}'}\left(\hat{\alpha}_{k,n}-\alpha^{\star}\right)\right|^{p}\right]\\
    &\le C\left(p\right)\mathbf{E}\left[\left\|\left(\frac{1}{k_{\tau_{3},n}}\partial_{\alpha}^{2}\mathbb{H}_{1,\tau_{3},n}\left(\hat{\alpha}_{k-1,n}|\hat{\Lambda}_{n}\right)\right)^{-1}\right\|^{2p}\mathbf{1}_{K_{n}\left(\hat{\alpha}_{k-1,n}\right)}\right]^{1/2}\\
    &\qquad\times\mathbf{E}\left[\left|\frac{k_{\tau_{3},n}^{2^{k}q_{1}'}}{k_{\tau_{3},n}}\partial_{\alpha}\mathbb{H}_{1,\tau_{3},n}\left(\alpha^{\star}|\hat{\Lambda}_{n}\right)\right|^{2p}\right]^{1/2}\\
    &\quad+C\left(p\right)\mathbf{E}\left[\left\|\left(\frac{1}{k_{\tau_{3},n}}\partial_{\alpha}^{2}\mathbb{H}_{1,\tau_{3},n}\left(\hat{\alpha}_{k-1,n}|\hat{\Lambda}_{n}\right)\right)^{-1}\right\|^{2p}\mathbf{1}_{K_{n}\left(\hat{\alpha}_{k-1,n}\right)}\right]^{1/2}\\
    &\qquad\times\mathbf{E}\left[\left\|\frac{1}{k_{\tau_{3},n}}\int_{0}^{1}\left(1-s\right)\partial_{\alpha}^{3}\mathbb{H}_{1,\tau_{3},n}\left(\hat{\alpha}_{k-1,n}+s\left(\alpha^{\star}-\hat{\alpha}_{k-1,n}\right)|\hat{\Lambda}_{n}\right)\mathrm{d}s\right\|^{4p}\right]^{1/4}\\
    &\qquad\times \mathbf{E}\left[\left|k_{\tau_{3},n}^{2^{k-1}q_{1}'}\left(\alpha^{\star}-\hat{\alpha}_{k-1,n}\right)\right|^{8p}\right]^{1/4}\\
    &\quad+C\left(p\right).
\end{align*}
The result of \citet{Nakakita-Uchida-2018c} leads to the $L^{p}$-boundednesses for all $p\ge1$ such as
\begin{align*}
    &\mathbf{E}\left[\left|\frac{k_{\tau_{3},n}^{2^{k}q_{1}'}}{k_{\tau_{3},n}}\partial_{\alpha}\mathbb{H}_{1,\tau_{3},n}\left(\alpha^{\star}|\hat{\Lambda}_{n}\right)\right|^{p}\right]<\infty,\\
    &\mathbf{E}\left[\left\|\frac{1}{k_{\tau_{3},n}}\int_{0}^{1}\left(1-s\right)\partial_{\alpha}^{3}\mathbb{H}_{1,\tau_{3},n}\left(\hat{\alpha}_{k-1,n}+s\left(\alpha^{\star}-\hat{\alpha}_{k-1,n}\right)|\hat{\Lambda}_{n}\right)\mathrm{d}s\right\|^{p}\right]<\infty.
\end{align*}
Hence we obtain
\begin{align*}
    \mathbf{E}\left[\left|k_{\tau_{3},n}^{2^{k-1}q_{1}'}\left(\hat{\alpha}_{k-1,n}-\alpha^{\star}\right)\right|^{p}\right]<\infty\Rightarrow \mathbf{E}\left[\left|k_{\tau_{3},n}^{2^{k}q_{1}'}\left(\hat{\alpha}_{k,n}-\alpha^{\star}\right)\right|^{p}\right]<\infty,
\end{align*}
and as shown in Theorem \ref{PLDI}, $\mathbf{E}\left[\left|k_{\tau_{3},n}^{ q_{1}'}\left(\hat{\alpha}_{0,n}-\alpha^{\star}\right)\right|^{p}\right]<\infty$, and therefore we have the $L^{p}$-evaluation $\mathbf{E}\left[\left|k_{\tau_{3},n}^{2^{J_{1}-1}q_{1}'}\left(\hat{\alpha}_{J_{1},n}-\alpha^{\star}\right)\right|^{p}\right]<\infty$ as a result. Then
\begin{align*}
    &\mathbf{E}\left[\left|k_{\tau_{3},n}^{1/2}\left(\hat{\alpha}_{J_{1},n}-\alpha^{\star}\right)\right|^{p}\right]\\
    &\le C\left(p\right)\mathbf{E}\left[\left\|\left(\frac{1}{k_{\tau_{3},n}}\partial_{\alpha}^{2}\mathbb{H}_{1,\tau_{3},n}\left(\hat{\alpha}_{J_{1}-1,n}|\hat{\Lambda}_{n}\right)\right)^{-1}\right\|^{2p}\mathbf{1}_{K_{n}\left(\hat{\alpha}_{J_{1}-1,n}\right)}\right]^{1/2}\\
    &\hspace{2cm}\times\mathbf{E}\left[\left|\frac{k_{\tau_{3},n}^{1/2}}{k_{\tau_{3},n}}\partial_{\alpha}\mathbb{H}_{1,\tau_{3},n}\left(\alpha^{\star}|\hat{\Lambda}_{n}\right)\right|^{2p}\right]^{1/2}\\
    &\quad+C\left(p\right)\mathbf{E}\left[\left\|\left(\frac{1}{k_{\tau_{3},n}}\partial_{\alpha}^{2}\mathbb{H}_{1,\tau_{3},n}\left(\hat{\alpha}_{J_{1}-1,n}|\hat{\Lambda}_{n}\right)\right)^{-1}\right\|^{2p}\mathbf{1}_{K_{n}\left(\hat{\alpha}_{J_{1}-1,n}\right)}\right]^{1/2}\\
    &\qquad\times\mathbf{E}\left[\left\|\frac{1}{k_{\tau_{3},n}}\int_{0}^{1}\left(1-s\right)\partial_{\alpha}^{3}\mathbb{H}_{1,\tau_{3},n}\left(\hat{\alpha}_{J_{1}-1,n}+s\left(\alpha^{\star}-\hat{\alpha}_{J_{1}-1,n}\right)|\hat{\Lambda}_{n}\right)\mathrm{d}s\right\|^{4p}\right]^{1/4}\\
    &\qquad\times \mathbf{E}\left[\left|k_{\tau_{3},n}^{1/4}\left(\alpha^{\star}-\hat{\alpha}_{J_{1}-1,n}\right)\right|^{8p}\right]^{1/4}\\
    &\quad+C\left(p\right)\\
    &<\infty
\end{align*}
because the discussion above holds, $J_{1}> -\log_{2}q_{1}'-1$ and hence $2^{J_{1}-1}q_{1}'> 1/4$. The same result for $\hat{\beta}_{J_{2},n}$ can be derived from the parallel discussion.

In the second place, we will see the convergence in law. Actually it is sufficient to see
\begin{align*}
    k_{\tau_{3},n}^{1/4}\left(\hat{\alpha}_{J_{1}-1,n}-\alpha^{\star} \right) \to^{P} \mathbf{0},
\end{align*}
and it can be verified because
\begin{align*}
    \mathbf{E}\left[\left|k_{\tau_{3},n}^{1/4}\left(\alpha^{\star}-\hat{\alpha}_{J_{1}-1,n}\right)\right|^{p}\right]=k_{\tau_{3},n}^{1/4-2^{J_{1}-1}q_{1}'}\mathbf{E}\left[\left|k_{\tau_{3},n}^{2^{J_{1}-1}q_{1}'}\left(\alpha^{\star}-\hat{\alpha}_{J_{1}-1,n}\right)\right|^{p}\right]\to 0.
\end{align*}
The same argument holds for $\hat{\beta}_{J_{2},n}$ too.
\end{proof}


\end{document}